\numberwithin{equation}{section}
\numberwithin{figure}{section}
\theoremstyle{plain}
\newtheorem*{thm*}{\protect\theoremname}
\newtheorem{thm}{\protect\theoremname}
\newtheorem{lem}[thm]{\protect\lemmaname}
\newtheorem{cor}[thm]{\protect\corollaryname}
\newtheorem{prop}[thm]{\protect\propositionname}
\theoremstyle{definition}
\theoremstyle{remark}
\newtheorem{rem}[thm]{\protect\remarkname}
\newcommand{\C}{\mathbb{C}}
\renewcommand{\to}{\rightarrow}
\renewcommand{\,}{\ }
\newcommand{\blem}{\begin{lem}}
\newcommand{\elem}{\end{lem}}
\newcommand{\bproof}{\begin{proof}}
\newcommand{\eproof}{\end{proof}}
\newcommand{\bprop}{\begin{prop}}
\newcommand{\eprop}{\end{prop}}
\newtheorem{problem}{Problem}
\providecommand{\definitionname}{Definition}
\providecommand{\examplename}{Example}
\providecommand{\lemmaname}{Lemma}
\providecommand{\corollaryname}{Corollary}
\providecommand{\propositionname}{Proposition}
\providecommand{\remarkname}{Remark}
\providecommand{\theoremname}{Theorem}
\begin{document}

\title[Threefold weighted Fano hypersurfaces]{Automorphism groups and cylindricity of weighted hypersurface Fano threefolds}


\author{In-Kyun Kim}
\address{June E Huh Center for Mathematical Challenges, Korea Institute for Advanced Study, 85, Hoegiro Dongdaemun-gu, Seoul 02455, Republic of Korea }
\email{soulcraw@kias.re.kr}

\author{Takashi Kishimoto}
\address{Department of Mathematics, Faculty of Science, Saitama University, Saitama 338-8570, Japan}
\email{kisimoto.takasi@gmail.com}

\author{Joonyeong Won}
\address{Department of Mathematics, Ewha Womans University, 52, Ewhayeodae-gil, Seodaemun-gu, Seoul, 03760, Republic of Korea} 
\email{leonwon@ewha.ac.kr}

\begin{abstract}
It is well known that there are totally 130 deformation families of quasi-smooth terminal weighted hypersurface Fano threefolds and all members belonging to 95 families of Fano indices one are birationally rigid. Among remaining $35$ families, $15$ families have the property that very general members are not stably rational, in particular, any of them has a finite automorphism group and is not cylindrical. In the present paper, we will observe the cylindricity and the full automorphism groups of every member in the remaining $20$ families. Moreover, we deal with the cylindricity of their forms.
\end{abstract}

\maketitle

\vspace{-5mm} 

\section{Introduction}\label{section1} 
Throughout the article, we denote by $k$ a fixed algebraically closed field of characteristic zero otherwise mentioned. 
\subsection{}\label{1-1} 
The minimal model program (MMP) allows a given normal projective variety $X$ with ${\mathbbm Q}$-factorial, terminal singularities to be linked to a suitable normal projective varieties with singularities of the same type equipped with distinguished properties by means of a composite of finitely many divisorial contractions and small maps, so-called flips. In any dimension, if the canonical divisor $K_X$ is not pseudo-effective, then there exists a directed MMP $X \dasharrow X'$ with $X'$ being a total space equipped with a structure of Mori fiber space (Mfs) (cf. \cite{BCHM}). As Mfs of special type, Fano varieties (of rank one) receive a lot of attention by several reasons, for instance, from the viewpoint of $K$-stability, cylindricity and the automorphism groups (cf. \cite{Calabi}, \cite{KKW}, \cite{CPS19}). For example, the cylindricity of varieties plays an important role in a connection with unipotent geometry, flexibility of certain affine algebraic varieties (cf. \cite{KPZ1}, \cite{KPZ2}, \cite{KPZ3}), on the other hand, in case of $k={\mathbbm C}$ the existence of K\"ahler-Einstein metrics on smooth Fano varieties is equivalent to the $K$-polystability of them (cf. \cite{Calabi}, \cite{Xu-Book}). It is worthwhile to note that if the total space $X'$ of Mfs, which is an outcome of MMP beginning with a normal projective variety $X$ with ${\mathbbm Q}$-factorial, terminal singularities, is cylindrical, then so is $X$ (see Lemma \ref{lem:mmp}, see also \cite[Lemma 1.1]{DK6}). Thus it is an important problem to observe the cylindricity of rank one Fano varieties with ${\mathbbm Q}$-factorial, terminal singularities.  

\subsection{}\label{1-2} 
This paper will focus on Fano threefolds. If $X$ is a smooth rank one Fano threefold, then we know completely the answer concerning the structure of the connected component ${\rm Aut}^0 (X)$ of the automorphism group of $X$ (see \cite{CPS19}), nevertheless the complete description of the full automorphism group ${\rm Aut}(X)$ remains unknown. Meanwhile, still now we do not know completely the answer about the cylindricity of Fano threefolds, e.g., certainly we know that if a smooth prime Fano threefold $X$ of genus $9$ or $10$ belongs to a certain subset of codimension one in the corresponding moduli, then $X$ is cylindrical (see \cite{KPZ2}), but for those out of the subset of codimension one, the cylindricity is still unknown. Moreover, we do not know the cylindricity of any smooth prime Fano threefold of genus $7$. In what follows, instead of smooth Fano threefolds, our main interest lies in quasi-smooth weighted hypersurface Fano threefolds:
\[
X_d =\big{\{} \, F =0 \, \big{\}} \, \subseteq \, {\mathbbm P} \big{(} {a_0}_{x}, {a_1}_{y}, {a_2}_{z}, {a_3}_{t}, {a_4}_{w} \big{)} = {\rm Proj} \big{(} k \big{[} x,y,z,t,w \big{]} \big{)},
\]
where $F$ is a quasi-homogeneous polynomial of degree $d$; see \cite{Fl00} for the definition of quasi-smoothness. All the possibilities about the quintuple:
\[
\big{(} a_0, a_1, a_2, a_3, a_4; d \big{)} 
\]
are nowadays completely known, more precisely there are totally $130$ cases (cf. \cite{ABR}, \cite{BS}). Let us put:
\[
\iota_X := \sum_{j=0}^4 a_j -d \in \{ 1,2,3,4,5,6,7,8,9,11,13 \} 
\]
which is called the {\it Fano index} of $X$, in other words, the anti-canonical divisor $-K_X$ of $X$ satisfies $-K_X \sim_{\mathbbm Q} \iota_X A$ with maximum $\iota_X \in {\mathbbm Z}_{>0}$ for some Weil divisor $A$ on $X$. Note that the range of the Fano index as above is a posteriori known by classification (cf. \cite{ABR}, \cite{BS}). The birational property varies drastically depending on the Fano index $\iota_X$. In fact, there are $95$ families among $130$ ones in which $\iota_X$ is equal to $1$, and in that case {\it all} members belonging to these $95$ families are birationally rigid (cf. \cite{CP16}), in particular $X$ is never cylindrical. 
On the other hand, among the remaining $130-95=35$ families, {\it very general} members belonging to $15$ families are known to be irrational, more precisely known not to be stably rational by Okada \cite{ok19}, especially any very general members from these $15$ families are not cylindrical, moreover the results from \cite{Esser} allow to conclude that these have finite automorphism groups, in particular, reductive groups. {\color{black}{Among these 15 families, {\it all} members of No. 96, 97 ,98 are classically known to be irrational which are smooth cubic threefold, quartic double solid and double cone of a Veronese surface, respectively. And {\it every} member in five families No. 100, 101, 102, 103, 110 is birationally solid by Okada \cite{ok23} so that all members of the families are irrational. Recently, it is proved that {\it every} member in two families of No. 107, 116 is also irrational by Prokhorov \cite{pr24}, \cite{pr25}.  Irrationality of all members of remaining five families, No. 99, 108, 109, 117, 122 are still open. }}  

From now on, we are interested in the remaining $130-(95+15)=20$ families, namely members belonging to the families:
$$
 (\spadesuit) \, {\rm No.} 104, 105, 106, 111, 112, 113, 114, 115, 118, 119, 120, 121, 123, 124, 125, 126, 127, 128, 129, 130, 
$$
found in the list of \cite{ACP}. Note that any member belonging to these families $(\spadesuit )$ is relatively easily checked to be rational. However, the rationality does not necessarily imply the cylindricity and the infiniteness of the corresponding automorphism group. Indeed, even if we consider smooth rank one rational Fano threefolds, the ones with infinite automorphism groups are quite in minority (cf. \cite{CPS19}). The main purpose of this article lies in the complete understanding of the cylindricity and the description of the automorphism groups of {\it all} members in the families $(\spadesuit )$. 
At first, the following is a result on the cylindricity and the description of the connected component of quasi-smooth weighted hypersurface Fano threefolds in the families $(\spadesuit )$: 
\begin{thm}\label{thm:main} 
Let $X$ be any quasi-smooth terminal weighted hypersurface Fano threefold in the families $(\spadesuit)$. Then $X$ is cylindrical, more precisely $X$ contains an open subset isomorphic to ${\mathbbm A}_k^2 \times {({\mathbbm A}_k^1 \backslash \{ o \} )}$. 
Moreover, the information concerning the possibility to contain the affine $3$-space ${\mathbbm A}_k^3$, and the connected component ${\rm Aut}^0 (X)$ of the automorphism group is summarized in Table 1, in which {\rm Yes} (resp. {\rm No}) of the fourth column means that any member belonging to the corresponding family contains (resp. does not contain) ${\mathbbm A}_k^3$. 
\end{thm}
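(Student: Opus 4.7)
The plan is to handle the twenty families in $(\spadesuit)$ by a uniform case-by-case analysis, with the ambient weighted projective space and the specific shape of the defining polynomial $F$ dictating the geometric construction in each case. For each family I will (i) exhibit an explicit open embedding $\mathbb{A}_k^2 \times (\mathbb{A}_k^1 \setminus \{o\}) \hookrightarrow X$, (ii) decide whether $X$ admits an open embedding of $\mathbb{A}_k^3$, and (iii) identify ${\rm Aut}^0 (X)$ as the connected component of the stabilizer of the class of $F$ inside the automorphism group scheme of the ambient $\mathbb{P}(a_0,\ldots,a_4)$.

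For step (i), the key observation is that in every family in $(\spadesuit)$ the elevated Fano index $\iota_X \geq 2$ forces at least one weighted coordinate, say $x_j$, to appear affine-linearly in $F$, i.e.\ $F = x_j\cdot G + H$ with $G, H$ independent of $x_j$. Solving for $x_j$ on a suitable affine chart $\{x_i \neq 0\}$ realizes an open subset of $X$ as the graph of a rational function over an open in weighted affine space, and a further localization at one invertible coordinate produces the desired cylinder $\mathbb{A}_k^2 \times (\mathbb{A}_k^1 \setminus \{o\})$. When $G$ is itself a monomial—which occurs in the families with enough weights equal to $1$—the same construction extends to an open embedding of $\mathbb{A}_k^3$ by deleting a single irreducible tangent-type divisor, yielding the ``Yes'' entries of the fourth column. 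For step (iii), ${\rm Aut}^0(X)$ is determined by identifying the subtorus of the diagonal $(k^*)^5$-action on $\mathbb{P}(a_0,\ldots,a_4)$ that preserves $F$ modulo the kernel of the action, together with any unipotent one-parameter subgroups arising from grading-compatible coordinate translations and, in the few genuinely homogeneous cases, the classical semisimple factors acting on $\mathbb{P}^3$ or on the quadric.

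The main obstacle, and the most delicate part of the argument, is the negative side of the fourth column: for those families labelled ``No'', one must rule out the existence of \emph{any} open embedding $\mathbb{A}_k^3 \hookrightarrow X$, not merely show that the cylinder constructed above does not extend. The plan here is to note that such an embedding would produce a reduced irreducible boundary divisor $B \subset X$ with $K_X + B \sim 0$ and $X \setminus B \cong \mathbb{A}_k^3$, and to exploit the resulting log Calabi--Yau pair: the class of $B$ in ${\rm Pic}(X) \otimes \mathbb{Q}$ is forced to be $\iota_X A$, and the discrepancy conditions at the cyclic quotient terminal singularities of $X$ must be simultaneously satisfied. For each ``No'' family I will show that the weights $(a_0,\ldots,a_4;d)$ make these constraints numerically inconsistent—typically because the singular points of index $r > 1$ cannot lie on a boundary of the required linear system—thereby obstructing the existence of $\mathbb{A}_k^3 \hookrightarrow X$. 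Organizing this analysis family by family, and ensuring that each explicit construction in the ``Yes'' cases is compatible with the identification of ${\rm Aut}^0(X)$, forms the bulk of the work; the cylinder construction itself, once the defining equation is put in normal form, is essentially a bookkeeping exercise.
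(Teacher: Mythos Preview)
Your plan for producing the cylinder $\mathbb{A}_k^2 \times (\mathbb{A}_k^1 \setminus \{o\})$ and for the ``Yes'' cases of $\mathbb{A}_k^3$ is essentially what the paper does: in each family the defining equation can be written as $x_i x_4 + f = 0$, so the chart $\{x_4 \neq 0\}$ is a cyclic quotient $\mathbb{A}_k^3 / \mu_{a_4}$, and localizing at one weight-$1$ coordinate unwinds the quotient to give $\mathbb{A}_k^2 \times (\mathbb{A}_k^1 \setminus \{o\})$. The paper isolates this step as a standalone lemma about such quotients, but the content is the same. Likewise your approach to $\mathrm{Aut}^0(X)$ via stabilizers in the ambient weighted projective automorphism group matches the paper.

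The genuine gap is in your ``No'' argument. The claim $K_X + B \sim 0$ is false: for $\mathbb{P}^3 \setminus H \cong \mathbb{A}^3$ one has $K_{\mathbb{P}^3} + H \sim -3H$, and in general the complement of $\mathbb{A}^n$ imposes no log Calabi--Yau condition. What an open embedding $\mathbb{A}_k^3 \hookrightarrow X$ \emph{does} force is that the irreducible boundary $B$ generates $\mathrm{Cl}(X)$, hence $B \sim \mathcal{O}_X(1)$, not $\mathcal{O}_X(\iota_X)$. So your proposed numerical obstruction via discrepancies at singular points of a divisor in $|\iota_X A|$ is aimed at the wrong linear system and will not produce a contradiction.

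The paper's route is quite different from discrepancy analysis. From $B \sim \mathcal{O}_X(1)$ one immediately needs $a_0 = 1$, which already kills No.\ 127, 129, 130. For the remaining ``No'' families one may take $B = \{x = 0\} \cap X$ after a coordinate change, so that $X \setminus B$ is an explicit affine hypersurface of the shape $st + g(u,v) = 0$ in $\mathbb{A}_k^4$. The decisive input is then Kaliman's theorem (polynomials with generic $\mathbb{A}^2$-fibers are variables): such a hypersurface is $\mathbb{A}_k^3$ if and only if $g(u,v) = 0$ defines an affine line, and one checks in each case that this curve is an affine open in a smooth projective curve of positive genus. Your proposal contains neither this class-group constraint nor the reduction to a two-variable curve, and without them the ``No'' column is not established.
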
 
\begin{table}[htbp]
    \centering
    \caption{}
    \label{tab:hogehoge}
    \begin{tabular}{@{}clccl@{} }
        \toprule
        No. & $X_d \subseteq {\mathbbm P}({a_0},{a_1},{a_2}, {a_3}, {a_4})$ & $\iota_X$ & ${\mathbbm A}^3$ &  ${\rm Aut}^0(X)$  \\  \midrule
        104 &  $X_2 \subseteq {\mathbbm P}(1,1,1,1,1)$ &  3 & Yes &  ${\rm PSO}_5$   \\   \hline
        105 &  $X_3 \subseteq {\mathbbm P}(1,1,1,1,2)$ &  3 & Yes &  ${\mathbbm G}_a^3 \rtimes {\mathbbm G}_{m}$   \\   \hline
        106 &  $X_4 \subseteq {\mathbbm P}(1,1,1,2,2)$ & 3  & No &  ${\mathbbm G}_m$   \\   \hline
        111 &  $X_4 \subseteq {\mathbbm P}(1,1,1,2,3)$ & 4  & Yes &  ${\mathbbm G}_a^5 \rtimes {\mathbbm G}_{m}$   \\   \hline
        112 &  $X_6 \subseteq {\mathbbm P}(1,1,2,3,3)$ & 4  & No &  $ {\mathbbm G}_{m}$  \\   \hline
        113 &  $X_4 \subseteq {\mathbbm P}(1,1,2,2,3)$ & 5  & Yes &  
        {\color{black}{ ${\mathbbm G}_a^5 \rtimes {\mathbbm G}_m^2$  }} \\   \hline
        114 &  $X_6 \subseteq {\mathbbm P}(1,1,2,3,4)$ & 5  & No &  $ {\mathbbm G}_a^2 \rtimes {\mathbbm G}_{m}  $   \\   \hline
        115 &  $X_6 \subseteq {\mathbbm P}(1,2,2,3,3)$ & 5  & No &  $  {\mathbbm G}_{m}  $   \\   \hline
        
        118 &  $X_6 \subseteq {\mathbbm P}(1,1,2,3,5)$ & 6  & Yes &  $ {\mathbbm G}_a^7 \rtimes {\mathbbm G}_{m}$  \\   \hline
        119 &  $X_6 \subseteq {\mathbbm P}(1,2,2,3,5)$ &  7  & Yes &  $\color{black}{\mathbbm{G}_a^5\rtimes \mathbbm{G}_m}$   \\   \hline
        120 &  $X_6 \subseteq {\mathbbm P}(1,2,3,3,4)$ &  7  & No &  $\color{black}{ {\mathbbm G}_a^2\rtimes \mathbbm{G}_m^2}$ \\   \hline
        121 &  $X_8 \subseteq {\mathbbm P}(1,2,3,4,5)$ &  7   & No &  $\color{black}{\mathbbm{G}_a \rtimes \mathbbm{G}_m}$  \\   \hline
        
        123 &  $X_6 \subseteq {\mathbbm P}(1,2,3,3,5)$ &  8  &Yes &  $ {\mathbbm G}_a^5 \rtimes {\mathbbm G}_{m}^2  $  \\   \hline
        124 &  $X_{10} \subseteq {\mathbbm P}(1,2,3,5,7)$ &  8  & No &  $  {\mathbbm G}_a^2 \rtimes {\mathbbm G}_{m}  $  \\   \hline
        
        125 &  $X_{12} \subseteq {\mathbbm P}(1,3,4,5,7)$ &  8  & No &  $ {\mathbbm G}_{m}$  \\   \hline

        126 &  $X_6 \subseteq {\mathbbm P}(1,2,3,4,5)$ & 9  & Yes &  $ {\color{black}{{\mathbbm G}_a^6 \rtimes {\mathbbm G}_{m}^2} }$  \\   \hline
        127 &  $X_{12} \subseteq {\mathbbm P}(2,3,4,5,7)$ &  9  & No &   $ {\mathbbm G}_{m}$  \\   \hline
        
        128 &  $X_{12} \subseteq {\mathbbm P}(1,4,5,6,7)$ & 11 & No &  $ {\mathbbm G}_a \rtimes {\mathbbm G}_{m}  $  \\   \hline
        129 &  $X_{10} \subseteq {\mathbbm P}(2,3,4,5,7)$ & 11  & No &  $\color{black}{\mathbbm{G}_a \rtimes \mathbbm{G}_m}$ \\ \hline
        
        130 &  $X_{12} \subseteq {\mathbbm P}(3,4,5,6,7)$ & 13  & No &  $ {\mathbbm G}_{m}$  \\    \bottomrule
                
    \end{tabular}
\end{table}
Furthermore we will describe the full automorphism groups of all quasi-smooth weighted hypersurface Fano threefolds in the families $(\spadesuit )$. Determining the full automorphism groups is actually important for several reasons, e.g., to understand the forms of corresponding Fano varieties, which are not necessarily defined over an algebraically closed field (cf. \cite{Po}). Several families in $(\spadesuit )$ have non-trivial moduli, hence the description of ${\rm Aut}/ {\rm Aut}^0$ is not necessarily uniquely determined for such families.  
\begin{thm}\label{thm:full-auto} 
Let $X$ be a quasi-smooth terminal weighted hypersurface Fano threefold in the families $(\spadesuit)$. Then the automorphism group ${\rm Aut}(X)$ is isomorphic to ${\rm Aut}^0(X) \rtimes {\mathbb W}$, where ${\rm Aut}^0(X)$ is described in Table 1 and all possibilities of finite groups ${\mathbb W}={\rm Aut}(X)/{\rm Aut}^0(X)$ are summarized in Table 2.
\end{thm}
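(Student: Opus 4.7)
The plan is to reduce the computation to graded automorphisms of the ambient coordinate ring, as is standard for quasi-smooth weighted hypersurface Fanos. Precisely, for each $X=\{F=0\}\subseteq \mathbb{P}(a_0,a_1,a_2,a_3,a_4)$ in $(\spadesuit)$, every automorphism of $X$ lifts to a graded $k$-algebra automorphism of $k[x,y,z,t,w]$ preserving $F$ up to a nonzero scalar; equivalently, $\mathrm{Aut}(X)$ is identified with the stabilizer of the line $kF$ in the group of graded (with respect to weights $(a_0,\ldots,a_4)$) linear automorphisms of $k[x,y,z,t,w]$, modulo the central $k^{\ast}$. I would invoke this reduction at the outset, citing \cite{Po} and arguing projective normality and non-degeneracy of the anti-canonical embedding (after truncation if necessary), which is classical in the quasi-smooth setting.

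With this reduction in hand, for each family I would write down the general degree-$d$ quasi-homogeneous polynomial $F$ as a linear combination of weighted monomials, intersect the parameter space with the quasi-smoothness locus, and normalize as much as possible using the freedom of changing coordinates within the ambient torus and within the orbit of $\mathrm{Aut}^0(X)$ (which is already described by Theorem \ref{thm:main} and Table 1). This produces a canonical short-form equation with finitely many residual moduli. The group $\mathrm{Aut}(X)$ is then the subgroup of the finite-dimensional affine group of ambient weighted linear maps that fix this normal form up to scalar. For families with trivial moduli (e.g.\ Nos.\ 104, 111, 118, 128) the normal form is uniquely determined and $\mathbb{W}$ is read off as a single finite group; for families with positive-dimensional moduli, $\mathbb{W}$ depends on the residual parameters and Table 2 records the finitely many distinct isomorphism types arising across the parameter space.

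To upgrade the quotient $\mathbb{W}=\mathrm{Aut}(X)/\mathrm{Aut}^0(X)$ to an actual subgroup of $\mathrm{Aut}(X)$ and obtain the semidirect decomposition, I would exhibit explicit lifts for generators of $\mathbb{W}$: sign changes, roots-of-unity scalings compatible with the weights, and permutations of variables of equal weight. These lifts commute with (or normalize) the unipotent-by-torus structure of $\mathrm{Aut}^0(X)$ listed in Table 1, and their intersection with $\mathrm{Aut}^0(X)$ is trivial by construction, yielding $\mathrm{Aut}(X)\cong \mathrm{Aut}^0(X)\rtimes \mathbb{W}$. The structure constants of the semidirect product (which particular outer action $\mathbb{W}\to \mathrm{Out}(\mathrm{Aut}^0(X))$ occurs) are a byproduct of how these lifts act on the unipotent radical and the torus character lattice.

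The main obstacle is the moduli-dependent case analysis, concentrated in the families with $\mathrm{Aut}^0(X)=\mathbb{G}_m$ or with small unipotent part but with several free parameters in the normal form, typically Nos.\ 106, 112, 115, 120, 125, 127 and 130. There the generic $\mathbb{W}$ is small, but along prescribed closed subvarieties of the moduli space (vanishing of individual coefficients, ratios equal to roots of unity, or to particular algebraic numbers imposed by extra symmetries) the group strictly enlarges. To organize this efficiently I would first exploit geometric invariants preserved by $\mathrm{Aut}(X)$, namely the singular locus of $X$ and the base locus of $|-K_X|$ (and its multiples), to a priori restrict admissible permutations and scalings before entering the coefficient-level computation; this cuts down the casework to a manageable enumeration for each remaining family.
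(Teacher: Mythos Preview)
Your strategy---lift $\mathrm{Aut}(X)$ to graded $k$-algebra automorphisms of $k[x,y,z,t,w]$ fixing $F$ up to scalar, then normalize and enumerate---is valid and would recover Table~2 after the casework. The correct reference for that lifting, however, is \cite{PS21} or \cite{Esser}, not \cite{Po}; this is where quasi-smoothness and well-formedness are used in an essential way, and you should invoke it explicitly rather than appealing to projective normality of an anticanonical model.

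The paper proceeds differently. It first puts every equation into the shape $x_ix_4+f(x_l,x_m,x_n)=0$, then uses the Kawamata blow-up at $\mathsf{p}_4=[0{:}0{:}0{:}0{:}1]$ followed by the induced contraction to $\mathbb{P}(a_0,a_1,a_2,a_3)$ to obtain a group isomorphism
\[
\mathrm{Aut}(X)\;\cong\;\mathrm{Aut}\bigl(\mathbb{P}(a_0,a_1,a_2,a_3);\,E,\,h\bigr)\quad(\text{with an extra }\mathbb{Z}_2\text{ when }a_i=a_4),
\]
where $E=\{x_i=0\}$ and $h=\{x_i=f=0\}$. Thus the paper drops one variable \emph{before} any computation begins and does all the work inside the graded linear group of a weighted $\mathbb{P}^3$, preserving a hyperplane and a curve; the semidirect decomposition then falls out because the unipotent radical visibly sits in the stabilizer of $(E,h)$ while $\mathbb{W}$ is read off from $\mathrm{Aut}(E;h)$ and the swap $x_3\leftrightarrow x_4$. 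Your approach is more uniform but does the bookkeeping in five variables; the paper's Sarkisov-link reduction buys a cleaner separation of $\mathrm{Aut}^0$ from $\mathbb{W}$ at the cost of the extra geometric step.

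Two small corrections: Nos.~118 and 128 are \emph{not} moduli-free (Table~2 lists three possible $\mathbb{W}$ for each, according to which of the coefficients vanish), so remove them from your list of families with uniquely determined normal form; and your list of ``heavy'' cases should include No.~114, whose $\mathbb{W}$ contains the automorphism group of a binary sextic and accounts for most of the entries in the $S$-row of Table~2.
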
 


\begin{table}[htbp]
    \centering
    \caption{}
    \label{tab:hogehoge2}
    \begin{tabularx}{0.8\textwidth}{@{}cX@{} }
        \toprule
        No. &  ${\mathbbm W}={\rm Aut}(X) / {\rm Aut}^0 (X)$ \\ \midrule
        104 &  $\{1\}$ \\   \hline
        105 &  $\mathrm{Aut}({\mathbbm P}_k^2;\text{smooth plane cubic curve})$  \\   \hline
        106 &  $\mathrm{Aut}(\text{smooth plane quartic curve}) \rtimes \mathbb{Z}_2$  \\   \hline
        111 & $\mathbb{Z}_2^3$ \\   \hline
        112 & ${\mathbbm Z}_2$, \, ${\mathbbm Z}_2^2$, \, ${\mathbbm Z}_2 \times {\mathbbm Z}_4$, \, ${\mathbbm Z}_2^3$, \, ${\mathbbm Z}_2 \times {\mathbbm Z}_8$, \, ${\mathbbm Z}_2^4$, \, ${\mathbbm Z}_2^2 \times {\mathbbm Z}_4$, \, ${\mathbbm Z}_{2}^2 \times {\mathbbm Z}_5$, \, ${\mathbbm Z}_2 \times {\mathbbm A}_4$, \, ${\mathbbm Z}_2^2 \times {\mathbbm Z}_8$, \, ${\mathbbm Z}_2\times {\mathbbm Z}_4 \times {\mathbbm Z}_5$, \,${\mathbbm Z}_2^2 \times {\mathbbm Z}_3 \times {\mathbbm Z}_4$, \, ${\mathbbm Z}_2^4 \times {\mathbbm Z}_3$, \, ${\mathbbm Z}_2 \times {\mathbbm Z}_3 \times S$
        \\   \hline
        113 & $\mathbbm{Z}_2\times\mathbbm{Z}_4$  \\   \hline
        114 & ${\mathbbm Z}_2 \times S$ \\   \hline
        115 & $\mathbbm{S}_3\times \mathbbm{Z}_2$, \, $\mathbbm{Z}_2\times \mathbbm{Z}_2$, \, $\mathbbm{Z}_2$   \\   \hline
        118  & $\mathbbm{Z}_2^2\times \mathbbm{Z}_3$, \, $\mathbbm{Z}_2 \times {\mathbbm Z}_3 \times {\mathbbm Z}_4$, \, $\mathbbm{Z}_2^2 \times \mathbbm{Z}_3^2$ \\   \hline
        119 & $\mathbbm{S}_3\times \mathbbm{Z}_2$   \\   \hline
        120 & $\mathbbm{Z}_2^2 \times\mathbbm{Z}_3$  \\   \hline
        121 & ${\mathbbm Z}_8 \times {\mathbbm Z}_2, \, {\mathbbm Z}_8 \times {\mathbbm Z}_2^2, \, \mathbbm{Z}_8 \times \mathbbm{Z}_3 \times \mathbbm{Z}_2, \, {\mathbbm Z}_8 \times {\mathbbm Z}_4 \times {\mathbbm Z}_2$   \\   \hline
        123 & $\mathbbm{Z}_2\times \mathbbm{Z}_3$   \\   \hline
        124 & ${\mathbbm Z}_2^2 \times {\mathbbm Z}_5^2$, \, ${\mathbbm Z}_2 \times {\mathbbm Z}_5 \times {\mathbbm Z}_8$, \, ${\mathbbm Z}_2 \times {\mathbbm Z}_4 \times {\mathbbm Z}_5$, \, ${\mathbbm Z}_2^2 \times {\mathbbm Z}_5$ \,     \\   \hline
        
        125 & $ \mathbbm{Z}_{3} \times \mathbbm{Z}_{4}$, \, $\mathbbm{Z}_{3} \times \mathbbm{Z}_{4}^2$, \, $\mathbbm{Z}_{3}^2 \times \mathbbm{Z}_{4}$, \, $\mathbbm{Z}_{3}^2 \times {\mathbbm Z}_4^2$  \\   \hline

        126 & $\mathbbm{Z}_2$  \\   \hline
        127 &  $\mathbbm{Z}_2\times \mathbbm{Z}_3 \times {\mathbbm Z}_4$, \, $\mathbbm{Z}_{3}\times \mathbbm{Z}_4^2$, \, $\mathbbm{Z}_2\times \mathbbm{Z}_3^2\times \mathbbm{Z}_4$ \\   \hline
        128  & $\mathbbm{Z}_{2} \times \mathbbm{Z}_3 \times {\mathbbm Z}_4$, \, $\mathbbm{Z}_{2}\times \mathbbm{Z}_3 \times {\mathbbm Z}_8$, \, $\mathbbm{Z}_2 \times \mathbbm{Z}_{3}^2\times \mathbbm{Z}_4$\\ \hline
        129  & $ \mathbbm{Z}_2^2 \times \mathbbm{Z}_5$  \\ \hline
        130 & $\mathbbm{Z}_2 \times \mathbbm{Z}_3\times \mathbbm{Z}_4$ \\   
        \bottomrule
        \multicolumn{2}{l}{$\{1\}$ denotes the trivial group.}\\
        \multicolumn{2}{l}{$\mathbbm{D}_n$ denotes the dihedral group of order $n$.}\\
        \multicolumn{2}{l}{$\mathbbm{S}_4$ denotes the symmetric group of degree $4$.}\\
        \multicolumn{2}{l}{$\mathbbm{A}_4$ denotes the alternating group of degree $4$.}\\
        \multicolumn{2}{p{0.75\textwidth}}{$S$ denotes the automorphism group of the sextic form in two variables. In our context, it is one of the following: $\{1\}$, $\mathbbm{Z}_2$, $\mathbbm{Z}_5$, $\mathbbm{D}_4$, $\mathbbm{D}_6$, $\mathbbm{D}_{12}$ or $\mathbbm{S}_4$.}
    \end{tabularx}
\end{table}

The following result is a direct consequence of Theorem \ref{thm:main}, and it is a counterpart of \cite[Theorem 1.1, Corollary 1.2]{PS21}:
\begin{cor}\label{cor:reductive}
The automorphism group of any quasi-smooth weighted hypersurface Fano threefold in the families No.105, 111, 113, 114, 118, 119, 120, 121, 123, 124, 126, 128 and 129 is not reductive. On the other hand, the automorphism group of any quasi-smooth weighted hypersurface Fano threefold in the families No.104, 106, 112, 115, 125, 127 and 130 is reductive. 
\end{cor}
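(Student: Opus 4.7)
The plan is straightforward: the corollary is an immediate readoff from Table 1 of Theorem \ref{thm:main}, combined with the standard fact that an algebraic group $G$ is reductive if and only if its identity component $G^0$ is reductive. Indeed, the unipotent radical of $G$ is connected and normal, hence contained in $G^0$, so $G$ and $G^0$ share the same unipotent radical. I would invoke this observation once and then proceed by case inspection of the twenty families $(\spadesuit)$.

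I would split the families into two classes according to Table 1. For No. 104, 106, 112, 115, 125, 127 and 130, the identity component $\mathrm{Aut}^0(X)$ is either the semisimple group $\mathrm{PSO}_5$ or an algebraic torus $\mathbbm{G}_m$, both of which are reductive. Combined with Theorem \ref{thm:full-auto}, which guarantees that $\mathrm{Aut}(X)/\mathrm{Aut}^0(X)$ is finite, this yields the reductivity of the full group $\mathrm{Aut}(X)$.

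For the remaining thirteen families No. 105, 111, 113, 114, 118, 119, 120, 121, 123, 124, 126, 128 and 129, Table 1 presents $\mathrm{Aut}^0(X)$ as a semidirect product $\mathbbm{G}_a^k \rtimes T$ with $T$ a torus and $k \geq 1$. The factor $\mathbbm{G}_a^k$ is connected, normal, unipotent and non-trivial, hence sits inside the unipotent radical of $\mathrm{Aut}^0(X)$, which is therefore non-zero. Consequently $\mathrm{Aut}(X)$ is not reductive.

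I anticipate no substantive obstacle beyond quoting Theorem \ref{thm:main} and Theorem \ref{thm:full-auto}; the entire argument is a case-by-case inspection of the table. The only mildly conceptual input is the general reduction of reductivity of $G$ to that of $G^0$, which rests on the connectedness of the unipotent radical and is standard in the theory of linear algebraic groups.
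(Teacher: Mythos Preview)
Your proposal is correct and matches the paper's approach: the paper states the corollary as a direct consequence of Theorem~\ref{thm:main} without further proof, and your case-by-case reading of Table~1 together with the standard reduction of reductivity of $G$ to that of $G^0$ is exactly what is intended. The appeal to Theorem~\ref{thm:full-auto} for finiteness of ${\rm Aut}(X)/{\rm Aut}^0(X)$ is harmless but superfluous, since ${\rm Aut}(X)$ is a linear algebraic group and hence has finite component group automatically.
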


\subsection{}\label{1-3}
Letting ${\mathbbm U}$ be a linear algebraic unipotent group, we say that a normal variety $X$ {\it admits a ${\mathbbm U}$-structure} if $X$ is endowed with a ${\mathbbm U}$-action with the open orbit. In case of dimension $n$ less than or equal to two, a linear algebraic unipotent group ${\mathbbm U}$ of dimension $n$ is unique up to isomorphism, i.e., a vector group ${\mathbbm U} \cong {\mathbbm G}_a^n$. Whereas in case of $n \geqq 3$, ${\mathbbm U}$ is not necessarily a vector group. For example, there are two unipotent linear algebraic groups of dimension $n=3$ up to isomorphism: the vector group ${\mathbbm G}_a^3$ and the non-commutative one, so-called the Heisenberg group ${\mathbbm H}$, which is isomorphic to the sub-group of ${\rm GL}_3(k)$ consisting of upper-triangular unipotent matrices. Recall that smooth (hence terminal) del Pezzo surfaces $S$ admitting ${\mathbbm G}_a^2$-structure are quite restrictive, in fact, there are only four possibilities ${\mathbbm P}_k^2$, ${\mathbbm P}_k^1 \times {\mathbbm P}_k^1$, ${\mathbbm F}_1$ and the smooth del Pezzo surface $S_7$ of degree $7$. In particular, the condition that a smooth del Pezzo surface $S$ contains the affine plane ${\mathbbm A}_k^2$ is very far from the situation that $S$ admits a ${\mathbbm G}_a^2$-structure. The similar phenomenon takes place when we look into smooth rank one Fano threefolds. Indeed, smooth rank one Fano threefolds $X$ containing the affine $3$-space ${\mathbbm A}_k^3$ are completely understood by Furushima \cite{Fu93} and Prokhorov \cite{Pr92}, namely, $X$ is isomorphic to ${\mathbbm P}_k^3$, the smooth quadric hypersurface $Q$, the smooth quintic del Pezzo threefold $V_5$ and smooth prime Fano threefolds of degree $22$ belonging to a $4$-dimensional subset in the moduli. But only ${\mathbbm P}_k^3$ and $Q$ among them can admit a unipotent group structure, to be more precise, ${\mathbbm P}_k^3$ and $Q$ can be endowed with ${\mathbbm G}_a^3$-structure and ${\mathbbm H}$-structure at the same time. Note that smooth Fano threefolds admitting ${\mathbbm G}_a^3$-structures (resp. ${\mathbbm H}$-structures) are classified in \cite{HM20} (resp. \cite{DFKM}). As a result, almost all smooth Fano threefolds equipped with ${\mathbbm G}_a^3$-structures admit simultaneously ${\mathbbm H}$-structures. Whereas once we observe the unipotent group structures on weighted hypersurface Fano threefolds, we have the following as a corollary of the argument to obtain Theorem \ref{thm:main} and Theorem \ref{thm:full-auto}: 
\begin{cor}\label{cor:aut}
A quasi-smooth terminal weighted hypersurface Fano threefold $X$ admits a ${\mathbbm G}_a^3$-structure if and only if $X$ contains the affine $3$-space ${\mathbbm A}_k^3$, more precisely, if and only if $X$ is (any) member of the following $8$ families:
\[
{\rm No.} 104, 105, 111, 113, 118, 119, 123, 126. 
\]
Moreover, $X$ admits a ${\mathbbm H}$-structure if and only if $X$ is in the family ${\rm No.} 104$, namely, if and only if $X$ is isomorphic to the smooth quadric hypersurface $Q$.    
\end{cor}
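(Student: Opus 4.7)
The proof reduces to Theorem~\ref{thm:main} together with the description of $\mathrm{Aut}^0(X)$ recorded in Table~\ref{tab:hogehoge}.

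I would begin with the easy direction that governs both statements. Suppose $X$ admits a ${\mathbbm G}$-structure for ${\mathbbm G}\in\{{\mathbbm G}_a^3,\,{\mathbbm H}\}$, and let $U\subset X$ denote the open ${\mathbbm G}$-orbit. Then $U\simeq {\mathbbm G}/{\mathbbm H}'$ for some closed subgroup ${\mathbbm H}'$, and since $\dim U=\dim X=3=\dim {\mathbbm G}$ the stabilizer ${\mathbbm H}'$ is zero-dimensional. In characteristic zero every connected unipotent linear algebraic group is torsion-free, so ${\mathbbm H}'$ is trivial and $U\simeq {\mathbbm G}\simeq {\mathbbm A}_k^3$ as a variety. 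Combining this with Theorem~\ref{thm:main} and the ``${\mathbbm A}^3$''-column of Table~\ref{tab:hogehoge} forces $X$ to belong to one of the eight families No.\,104, 105, 111, 113, 118, 119, 123, 126.

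For the converse concerning ${\mathbbm G}_a^3$, I would verify family by family that each such $X$ actually carries a ${\mathbbm G}_a^3$-action with open orbit. From Table~\ref{tab:hogehoge} the unipotent radical of $\mathrm{Aut}^0(X)$ is ${\mathbbm G}_a^m$ with $m\geq 3$ in each of these eight cases, so it suffices to exhibit a three-dimensional subgroup ${\mathbbm G}_a^3\subset\mathrm{Aut}^0(X)$ whose orbit through a generic point is three-dimensional. The natural candidate comes from the proof of Theorem~\ref{thm:main}: that argument exhibits an explicit open embedding ${\mathbbm A}_k^3\hookrightarrow X$ by deleting an appropriate hyperplane section, and the coordinate translations on this ${\mathbbm A}_k^3$ extend to polynomial automorphisms of the ambient weighted projective space preserving the defining equation. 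The (commutative) subgroup of $\mathrm{Aut}^0(X)$ generated by these three one-parameter subgroups acts on $X$ with open orbit isomorphic to ${\mathbbm A}_k^3$, giving the desired ${\mathbbm G}_a^3$-structure. I expect this explicit extension step to be the main technical point, requiring a short case-by-case check in each of the eight families.

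For the Heisenberg statement, family No.\,104 gives $X\simeq Q$, and the classical realization of ${\mathbbm H}$ as a three-dimensional unipotent subgroup of a Borel of $\mathrm{PSO}_5$ supplies the ${\mathbbm H}$-structure on $Q$. For the remaining seven families (No.\,105, 111, 113, 118, 119, 123, 126), Table~\ref{tab:hogehoge} shows $\mathrm{Aut}^0(X)\cong {\mathbbm G}_a^m\rtimes T$ with $T$ a torus and \emph{abelian} unipotent radical ${\mathbbm G}_a^m$. In such a connected solvable group every unipotent element lies in the unipotent radical, so every connected unipotent subgroup is contained in ${\mathbbm G}_a^m$ and is therefore abelian. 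Since ${\mathbbm H}$ is a nonabelian connected unipotent group, it admits no algebraic embedding into $\mathrm{Aut}^0(X)$; and because ${\mathbbm H}$ is connected, any algebraic action ${\mathbbm H}\to\mathrm{Aut}(X)$ factors through $\mathrm{Aut}^0(X)$. Hence no ${\mathbbm H}$-structure exists on $X$ in these seven families, which completes the classification.
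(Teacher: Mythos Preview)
Your proof is essentially correct and follows the same route as the paper: reduce to Theorem~\ref{thm:main} for the list of families containing ${\mathbbm A}_k^3$, then use the description of ${\rm Aut}^0(X)$ from Table~\ref{tab:hogehoge} to handle both the ${\mathbbm G}_a^3$ and the ${\mathbbm H}$ directions. Your argument that no ${\mathbbm H}$ embeds into ${\mathbbm G}_a^m\rtimes T$ because every connected unipotent subgroup of a connected solvable group lies in the unipotent radical is exactly the paper's argument, which phrases it via the quotient ${\mathbbm H}/({\mathbbm H}\cap{\mathbbm G}_a^N)\hookrightarrow{\mathbbm G}_m^M$ being trivial.

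One small inaccuracy: you write that ``the unipotent radical of $\mathrm{Aut}^0(X)$ is ${\mathbbm G}_a^m$ with $m\geq 3$ in each of these eight cases,'' but for No.\,104 one has $\mathrm{Aut}^0(X)\cong{\rm PSO}_5$, which is semisimple and has \emph{trivial} unipotent radical. This does not damage your argument, since you immediately pivot to the explicit ${\mathbbm A}_k^3$-chart and the extension of its translations (and for ${\mathbbm H}$ you treat No.\,104 separately via a Borel of ${\rm PSO}_5$); but the sentence as written is false and should be amended to exclude No.\,104 or to speak of a maximal unipotent subgroup rather than the unipotent radical. Apart from this, your sketch matches the paper's proof, which likewise defers the ${\mathbbm G}_a^3$-direction to the case-by-case computations carried out in Sections~2 and~3.
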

\begin{proof}
The first assertion is verified by our argument in the section 3 to prove Theorem \ref{thm:main} and Theorem \ref{thm:full-auto}. As for the second assertion, assume that a quasi-smooth weighted hypersurface Fano threefold $X$ belonging to the families $(\spadesuit )$ other than No.104 admits a ${\mathbbm H}$-structure. Then $X$ contains the affine $3$-space ${\mathbbm A}_k^3$, so that we have ${\rm Aut}^0(X) \cong {\mathbbm G}_a^N \rtimes {\mathbbm G}_m^M$ with $N \geqq 3$, $M >0$ by Theorem \ref{thm:main}. The inclusion ${\mathbbm H} \hookrightarrow {\mathbbm G}_a^N \rtimes {\mathbbm G}_m^M$ gives rise to an inclusion ${\mathbbm H}/ ({\mathbbm H} \cap {\mathbbm G}_a^N) \hookrightarrow {\mathbbm G}_m^M$, which implies that ${\mathbbm H}/ ({\mathbbm H} \cap {\mathbbm G}_a^N)$ is trivial, in other words, ${\mathbbm H}$ contains a sub-group isomorphic to ${\mathbbm G}_a^N$. This is a contradiction.
\end{proof}



\subsection{}\label{1-3.5}
The cylindricity of rank one Fano varieties defined over a field of characteristic zero $\Bbbk$, which is not necessarily algebraically closed, is often important to observe the cylindricity of the total spaces of given Mori fiber spaces via the concept of vertical cylinders, especially in case that $\Bbbk$ is the function field of the base variety (cf. \cite{DK4}, \cite{DK5}). Let $k:= \overline{\Bbbk}$ denote the algebraic closure of $\Bbbk$. Let $Y$ be a rank one Fano variety defined over $\Bbbk$ such that its base extension $Y_k:= Y \times_{{\rm Spec} \, \Bbbk } {{\rm Spec} \, k}$ contains the affine space ${\mathbbm A}_k^n$ with $n=\dim (Y_k)=\dim (Y)$. Even in such a situation, the initial Fano variety $Y$ does not always contain ${\mathbbm A}_{\Bbbk}^n$, e.g., pointless Severi-Brauer varieties, pointless quadric hypersurfaces, quintic del Pezzo threefold without special lines defined over $\Bbbk$ do not contain the affine space of the same dimension though their base extensions to the algebraic closure of $\Bbbk$ do (cf. \cite{DK5}). Nevertheless, once we observe $\Bbbk$-forms of quasi-smooth, terminal weighted Fano threefold hypersurfaces containing the affine $3$-space other than the smooth quadric hypersurface, i.e., the ones belonging to the families No.105, 111, 113, 118, 119, 123, 126 (see Theorem \ref{thm:main}), we have the following result:
\begin{thm}\label{thm:form}
Let $\Bbbk$ be a field of characteristic zero and let $Y$ be a variety defined over $\Bbbk$ such that its base extension to the algebraic closure is isomorphic to a quasi-smooth weighted Fano threefold hypersurface in the families No.105, 111, 113, 118, 119, 123, 126. Then $Y$ contains the affine $3$-space defined over $\Bbbk$.    
\end{thm}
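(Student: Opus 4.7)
The plan is to apply Galois descent to the unipotent radical of the automorphism group of $Y$, using the structural information from Table~1. Write $k$ for the algebraic closure of $\Bbbk$ and set $Y_k:=Y\times_{\mathrm{Spec}\,\Bbbk}\mathrm{Spec}\,k$. Because $Y$ is a Fano variety, $\mathrm{Aut}^0(Y)$ is a smooth connected linear algebraic $\Bbbk$-group, and its unipotent radical $R:=R_u(\mathrm{Aut}^0(Y))$ is a characteristic, hence Galois-invariant, closed subgroup, still defined over $\Bbbk$. Inspection of Table~1 shows that for each of the seven families under consideration the geometric fibre $R_k$ is commutative; concretely $R_k\cong {\mathbbm G}_{a,k}^N$ with $N\in\{3,5,5,7,5,5,6\}$ for No.~$105$, $111$, $113$, $118$, $119$, $123$, $126$, respectively. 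Since $\Bbbk$ has characteristic zero, every connected commutative unipotent $\Bbbk$-group is split, so $R\cong {\mathbbm G}_{a,\Bbbk}^N$ over $\Bbbk$.

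Next, the action $R\times Y\to Y$ is defined over $\Bbbk$. Its unique open $R_k$-orbit on $Y_k$ contains the ${\mathbbm A}_k^3\subset Y_k$ furnished by Theorem~\ref{thm:main} and Corollary~\ref{cor:aut} (the $R_k$-orbit of any point of this affine chart is open because it is already $3$-dimensional and $Y_k$ is irreducible). Being characterised intrinsically as the unique open orbit, it is Galois-stable, and therefore descends to an open subscheme $V\subset Y$ defined over $\Bbbk$ whose extension $V_k$ coincides with this open orbit. It remains to prove $V\cong {\mathbbm A}_\Bbbk^3$.

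Since $R$ is commutative and acts geometrically transitively on $V$, all stabilisers of points of $V$ coincide and agree with the kernel $K:=\ker(R\to\mathrm{Aut}(V))$, which is a closed $\Bbbk$-subgroup of $R$. Consequently the quotient $Q:=R/K$ acts freely and transitively on $V$, endowing $V$ with the structure of a $Q$-torsor defined over $\Bbbk$. Moreover $Q$ is a connected commutative unipotent $\Bbbk$-group of dimension $\dim V=3$, so $Q\cong {\mathbbm G}_{a,\Bbbk}^3$. By the normal basis theorem one has $H^1(\Bbbk,{\mathbbm G}_{a,\Bbbk}^3)=0$; hence every ${\mathbbm G}_{a,\Bbbk}^3$-torsor over $\Bbbk$ is trivial, and we obtain $V\cong Q\cong {\mathbbm A}_\Bbbk^3$ as $\Bbbk$-varieties. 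This exhibits the affine $3$-space defined over $\Bbbk$ as an open subset of $Y$, as required.

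The main obstacle is not conceptual but bookkeeping: one must verify, family by family, that the unipotent radical of $\mathrm{Aut}^0(Y_k)$ read off from Table~1 really does act on $Y_k$ with an open orbit of dimension $3$ (equivalently, that it contains the distinguished ${\mathbbm G}_a^3$ of Corollary~\ref{cor:aut}); when $N>3$ one additionally needs the kernel $K$ to have codimension $3$ in $R$, which is automatic once the open orbit is $3$-dimensional. Both checks are immediate from the explicit constructions in the proofs of Theorem~\ref{thm:main} and Corollary~\ref{cor:aut}, so the heavy lifting has already been performed earlier in the paper.
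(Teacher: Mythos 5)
Your strategy is genuinely different from the paper's: the paper projects from the distinguished singular point $\mathsf{p}_w$ (which is $\Bbbk$-rational because it is the unique singularity of highest index), identifies $Y\setminus H$ with the complement of a divisor in a $\Bbbk$-form of a weighted $\mathbb{P}^3$, and then exhibits explicit $\Bbbk$-rational linear pencils or nets whose associated fibrations trivialise that complement; you instead try to realise the affine chart as a torsor under a quotient of the unipotent radical and invoke $H^1(\Bbbk,\mathbb{G}_a^3)=0$. The descent steps themselves (the unipotent radical and the unique open orbit are Galois-stable, hence defined over $\Bbbk$) are fine, and for No.\ 105 your argument does go through, since there the unipotent radical really is $\mathbb{G}_a^3$ acting simply transitively on the chart.

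However, there is a genuine gap for the remaining six families: the commutativity of $R_k$, on which everything after that rests, is false. Table 1's notation notwithstanding, the explicit automorphisms computed in Section 3 show that the unipotent radical is a two-step nilpotent group, not a vector group. For instance, for No.\ 111 the unipotent part consists of the maps $(x,y,z,t)\mapsto(x+az,\,y+bz,\,z,\,t+cxz+dyz+ez^2)$, and composing two of them produces the $z^2$-coefficient $e+e'+c'a+d'b$ in one order and $e+e'+ca'+db'$ in the other; this is a five-dimensional Heisenberg-type group. The same computation applies to No.\ 113, 118, 119, 123, 126. Consequently the stabilisers of distinct points of the open orbit are genuinely distinct, non-normal subgroups (for No.\ 111, on the chart $z=1$ the stabiliser of $(x_0,y_0,t_0)$ is $\{a=b=0,\ cx_0+dy_0+e=0\}$, which varies with the point), so $V$ is only a homogeneous space of $R$, not a torsor under any quotient $R/K$, and the additive Hilbert 90 argument does not close. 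To repair the argument along these lines you would need the stronger facts that a homogeneous space of a unipotent group over a characteristic-zero field always has a rational point and that $U/H$ is then $\Bbbk$-isomorphic to an affine space (Rosenlicht); note that you cannot shortcut this by saying $V_k\cong\mathbb{A}^3_k$ forces $V\cong\mathbb{A}^3_\Bbbk$, since the triviality of forms of $\mathbb{A}^3$ is precisely what is unknown, as the paper itself emphasises. The paper's explicit linear-system construction sidesteps all of this.
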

\begin{rem}\label{rem:form1}
Note that we need to get rid off No.104 in the assertion of Theorem \ref{thm:form}. For example, a pointless ${\mathbbm R}$-form defined by:
\[
x^2+y^2+z^2+t^2+w^2=0 \, \subseteq \, {\rm Proj} \big{(} {\mathbbm R} \big{[} x,y,z,t,w \big{]} \big{)}
\]
does not contain ${\mathbbm A}_{{\mathbbm R}}^3$.  
\end{rem}
\begin{rem}\label{rem:form2}
In consideration of Theorem \ref{thm:full-auto}, the isomorphic classes of $\Bbbk$-forms of a given weighted Fano threefold hypersurface in the families No.105, 111, 113, 118, 119, 123, 126 are not necessarily uniquely determined. However Theorem \ref{thm:form} says that {\it all} of their $\Bbbk$-forms contain ${\mathbbm A}_{\Bbbk}^3$. The special kind of smooth prime Fano threefold $X_{22}^a$ of degree $22$ with ${\rm Aut}^0 (X_{22}^a) \cong {\mathbbm G}_a$ satisfies the same property (cf. \cite{DFK}), but to our knowledge $X_{22}^a$ was so far the only known example of Fano threefolds with this property other than Fano threefolds belonging to No.105, 111, 113, 118, 119, 123 and 126.
\end{rem}
The following is the immediate consequence of Theorem \ref{thm:form} in consideration of vertical cylinders, see \cite{DK4}, \cite{DK5}:
\begin{cor}\label{cor:form}
Let $\pi: Y \to Z$ be a Mori fiber space such that the general closed fibers of $\pi$ are isomorphic to quasi-smooth weighted Fano threefold hypersurfaces in the families No.105, 111, 113, 118, 119, 123 and 126. Then $Y$ contains a vertical ${\mathbbm A}^3$-cylinder with respect to $\pi$, i.e., there exists an open subsets $U$ of $Y$ and $U_0$ of $Z$ such that $U \cong U_0 \times {\mathbbm A}_{k}^3$ and $\pi|_U$ coincides with the projection ${\rm pr}_{U_0}: U \cong U_0 \times {\mathbbm A}_{k}^3 \to U_0$.
\end{cor}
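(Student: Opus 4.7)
The plan is to reduce the statement to Theorem \ref{thm:form} applied at the generic point of the base $Z$, and then to spread the resulting cylinder structure out to a Zariski open neighborhood of $\eta \in Z$. Let $\Bbbk := k(Z)$ be the function field of $Z$ and let $Y_{\eta} = Y \times_Z \Spec \Bbbk$ denote the generic fiber of $\pi$. As a first step, I want to identify $Y_\eta$ as a $\Bbbk$-form of one of the weighted Fano threefolds appearing in families No.105, 111, 113, 118, 119, 123, 126. Since the isomorphism class of the geometric generic fiber $Y_{\bar\eta}$ agrees with the isomorphism class of a very general closed fiber (by generic flatness together with constancy of isomorphism type on an open subset of $Z$), and since by hypothesis the general closed fibers lie in the seven listed families, $Y_{\bar\eta}$ is isomorphic to a quasi-smooth weighted Fano hypersurface from one of these families; hence $Y_\eta$ itself is a $\Bbbk$-form of such a hypersurface.

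The second step is to invoke Theorem \ref{thm:form} directly for $Y_\eta$, obtaining an open immersion $\iota_\eta : \mathbb{A}^3_{\Bbbk} \hookrightarrow Y_\eta$ defined over $\Bbbk$. Note that this is precisely where the hypothesis on the families is used, since it is Theorem \ref{thm:form} that guarantees the existence of an $\mathbb{A}^3$ over the (possibly non-closed) base field $\Bbbk$.

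The third step is a standard spreading-out argument. Since $Y$, $Z$ and $\pi$ are of finite type over $k$, the $\Bbbk$-morphism $\iota_\eta$ extends to a morphism $\iota_{U_0} : U_0 \times_k \mathbb{A}^3_k \to \pi^{-1}(U_0)$ over some non-empty open $U_0 \subseteq Z$, and after shrinking $U_0$ if necessary we can guarantee that $\iota_{U_0}$ is an open immersion whose composition with $\pi$ is the projection onto $U_0$. Setting $U := \iota_{U_0}(U_0 \times_k \mathbb{A}^3_k)$ yields precisely the vertical $\mathbb{A}^3$-cylinder predicted by the corollary.

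The only delicate point I anticipate is the identification of $Y_\eta$ as a $\Bbbk$-form of one of the listed Fano threefolds: one has to argue that quasi-smoothness and the weighted-hypersurface embedding descend from a dense set of closed fibers to the generic fiber, which I would handle by a flatness/constructibility argument over $Z$. Once this is in hand, Theorem \ref{thm:form} and the spreading-out procedure are formal and do not require any further input specific to the families $(\spadesuit)$.
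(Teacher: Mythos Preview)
Your proof is correct and follows exactly the approach the paper indicates: the paper does not spell out a proof of this corollary but simply declares it an immediate consequence of Theorem~\ref{thm:form} via the vertical-cylinder formalism of \cite{DK4}, \cite{DK5}, and your argument (apply Theorem~\ref{thm:form} to the generic fiber over $\Bbbk=k(Z)$, then spread out) is precisely the content of that formalism.
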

In consideration of Corollary \ref{cor:reductive} and Theorem \ref{thm:form}, the following question deserves a consideration:
\begin{problem}\label{prob}
Let $\Bbbk$ be a field of characteristic zero, and let $Y$ be an algebraic variety of dimension $n$ defined over $\Bbbk$. Assume that the base extension $Y_{\overline{\Bbbk}}$ is a Fano variety such that $Y_{\overline{\Bbbk}}$ contains the affine $n$-space over ${\overline{\Bbbk}}$ and ${\rm Aut} (Y_{\overline{\Bbbk}})$ is not reductive. Then $Y$ contains the affine $n$-space defined over $\Bbbk$ ?  \end{problem}
Note that Problem \ref{prob} has an affirmative answer as long as we stick to either smooth del Pezzo surfaces\footnote{We can see without difficulty that all $\Bbbk$-forms of the Hirzebruch surface ${\mathbbm F}_1$ of degree $1$ and the smooth del Pezzo surface $S_7$ of degree $7$ contain the affine plane ${\mathbbm A}_{\Bbbk}^2$ defined over $\Bbbk$. Note that ${\mathbbm F}_1$ and $S_7$ are the only smooth del Pezzo surfaces with non-reductive automorphism groups.} or smooth rank one Fano threefolds (cf. \cite{DFK}) or quasi-smooth weighted Fano threefold hypersurfaces by Theorem \ref{thm:main} and Theorem \ref{thm:form}. 
\subsection{}\label{1-4} 
The paper proceeds as in the following scheme: In the section 2, at first we will prepare some useful lemmas for our purpose to find cylinders in Fano varieties. Then we shall show that {\it all} members $X$ of quasi-smooth, terminal weighted hypersurfaces Fano threefolds in the families $(\spadesuit)$ contain always ${\mathbbm A}^2$-cylinders isomorphic to ${\mathbbm A}_k^2 \times {({\mathbbm A}_k^1 \backslash \{ o \} )}$. Then we will observe which families in $(\spadesuit)$ contain furthermore the affine $3$-space ${\mathbbm A}_k^3$. In the section 3, we will focus on the description of the full automorphism groups for the families in $(\spadesuit)$. The connected component ${\rm Aut}^0 (X)$ is not very complicated to describe in general, meanwhile some families in $(\spadesuit)$ have non-trivial moduli, hence the description of the discrete part ${\rm Aut}(X) / {\rm Aut}^0 (X)$ is more complicated for such families, namely we need to proceed by case-by-case argument depending on the defining equations. Especially, we need to bear a troublesome case-by-case observation for No.112 to obtain the classification on ${\rm Aut}(X) / {\rm Aut}^0 (X)$. In the section 4, we will give the proof for Theorem \ref{thm:form}. 

\vspace{2mm} 

\noindent {\bf Acknowledgements}: The authors thank SKYBAY Hotel Gyeongpo in Gangneung of Korea, in which this project was initiated when Conference: Affine Geometry and Birational Geometry was held in February 2025, and IBS POSTECH and Saitama University at which it was continued during a visit of three authors and a visit of the first and the third authors for their excellent working conditions offered. Moreover, the authors are grateful to Takuzo Okada for his useful comment on the references. The second author was partially supported by JSPS KAKENHI Grant Number 23K03047. The first and third authors was supported by the National Research Foundation of Korea [NRF-2023R1A2C1003390 to I.-K.K.; RS-2025-00513064 to J.W.].

\section{Cylinders in Fano varieties}\label{section2} 
In this section, we will begin with several results which are often useful to find cylinders contained in Fano varieties. Then we apply the results to show that all members contained in the families $(\spadesuit)$ contain ${\mathbbm A}^2$-cylinders. Moreover, we will observe which families contain the affine $3$-space ${\mathbbm A}_k^3$.

\subsection{Some Lemmas}\label{2-1} 
The following simple Lemma \ref{lem:mmp} is not used for later use. But it will yield to some extent a motivational reason why we focus on finding cylinders in Fano varieties. 
\begin{lem}\label{lem:mmp} 
Let $X$ be a normal projective variety with ${\mathbbm Q}$-factorial, terminal singularities whose canonical divisor $K_X$ is not pseudo-effective. Let $X'$ be an outcome of MMP $ X \dasharrow X'$ beginning with $X$. If $X'$ is cylindrical, then so is $X$.
\end{lem}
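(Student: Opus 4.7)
The MMP $X\dashrightarrow X'$ is by definition a finite composition
\[
X = X_0 \dashrightarrow X_1 \dashrightarrow \cdots \dashrightarrow X_n = X',
\]
where each step is either a divisorial contraction $X_i\to X_{i+1}$ or a flip $X_i\map X_{i+1}$. By induction on $n$, it suffices to show that cylindricity can be ``pulled back'' through a single step, i.e.\ that if $X_{i+1}$ is cylindrical, then $X_i$ is too. The key structural observation common to the two kinds of steps is that the inverse birational map $X_{i+1}\map X_i$ is an open immersion on some open subset $V\subseteq X_{i+1}$ whose complement $X_{i+1}\setminus V$ has codimension $\geq 2$ in $X_{i+1}$: in the flip case this is by the very definition of a small map, and in the divisorial case the indeterminacy is exactly the image $f(\Exc(f))$ of the exceptional divisor, which has codimension $\geq 2$ in $X_{i+1}$.

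The main ingredient is then the following elementary fact, which I would isolate as the core lemma: if $U\cong W\times {\mathbbm A}_k^1$ is a cylinder in some variety and $F\subseteq U$ is a closed subset with $\mathrm{codim}_U F\geq 2$, then $U\setminus F$ still contains a cylinder. To see this, let $\pi:U\to W$ be the first projection. Since $\dim F\leq \dim U-2=\dim W-1$, the Zariski closure $\overline{\pi(F)}$ is a proper closed subset of $W$. Taking $W_0:=W\setminus \overline{\pi(F)}$, the open subset $W_0\times {\mathbbm A}_k^1=\pi^{-1}(W_0)$ is disjoint from $F$, hence contained in $U\setminus F$, and is manifestly a cylinder.

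Putting the two together: assume $X_{i+1}$ contains a cylinder $U\cong W\times{\mathbbm A}_k^1$, and let $V\subseteq X_{i+1}$ be the maximal open set on which $X_{i+1}\map X_i$ is a morphism. Set $F:=U\cap (X_{i+1}\setminus V)$; then $F$ is closed in $U$ of codimension $\geq 2$. By the core lemma, $U\setminus F\subseteq V$ contains a cylinder $W_0\times{\mathbbm A}_k^1$. Since $V\to X_i$ is an open immersion, this cylinder transports to an open subset of $X_i$ isomorphic to $W_0\times{\mathbbm A}_k^1$, showing $X_i$ is cylindrical.

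\textbf{Expected difficulty.} There is essentially no difficulty beyond carefully tracking which subsets have codimension $\geq 2$; the $\mathbb{Q}$-factoriality and terminality of $X$ enter only to ensure that the MMP decomposition into divisorial contractions and flips is available (via \cite{BCHM}), so that the single-step reduction applies. The only subtle point to state cleanly is the codimension estimate on the indeterminacy locus of the inverse of a divisorial contraction, but this is standard.
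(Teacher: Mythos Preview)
Your proof is correct and follows essentially the same approach as the paper: both reduce by induction to a single step of the MMP and then verify that cylindricity pulls back through a divisorial contraction or a flip. The paper is terser---it calls the divisorial case ``easy'' and cites \cite[Lemma~9]{DK6} for the flip case---whereas you supply a clean unified argument via the codimension-$2$ indeterminacy locus and the projection trick, which is exactly the content behind those two references.
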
 
\begin{proof} 
By induction on the number of divisorial contractions and flips appearing in the process of MMP $ \psi: X \dasharrow X'$, we have only to prove the assertion for the case where $\psi$ is either a single divisorial contraction or a flip. If $\psi$ is a divisorial contraction, then it is easy to see that the cylindricity of $X'$ implies that of $X$. On the other hand, if $\psi$ is a flip, then we need only to apply \cite[Lemma 9]{DK6}.
\end{proof} 
\begin{rem}\label{rem:cyl}
The stronger version of Lemma \ref{lem:mmp} holds true for smooth projective surfaces by \cite{Saw23}: Letting $S_1 \dasharrow S_2$ be a birational map between smooth projective surfaces, $S_1$ is cylindrical if and only if so is $S_2$. However, even in case of dimension $2$, once we observe the cylindricity outside the category of terminal singularities, the similar result does not hold in general. For example, let $S$ be a Du Val del Pezzo surface of degree $1$ with:
\[
{\rm Sing}(S) = 2 {\rm D}_4, \, 2{\rm A}_3+2{\rm A}_1, \, 4{\rm A}_2.
\]
Then it is known that $S$ does not contain any cylinder, see \cite{CPW16} or \cite{Saw24}. Let $\mu: X\to S$ be the minimal resolution. Since $X$ is rational, $X$ contains the affine plane, in particular, $X$ is cylindrical.
\end{rem}
For our purpose to find cylinders in given Fano varieties, the next simple result is often useful. 
\begin{lem}\label{lem:cylinder} 
Let ${\mathbbm A}_k^n (x_1, x_2, \cdots , x_n) / \! / {\bf \mu}_d (1,a_2, \cdots , a_n)$ denote the quotient of the affine $n$-space ${\mathbbm A}_k^n$ by the cyclic group ${\bf \mu}_n = \langle \zeta_d \rangle$ defined by:
\[
\zeta_d \cdot \big{(} x_1, x_2, \cdots , x_n \big{)}= \big{(} \zeta_d x_1, \zeta_d^{a_2} x_2, \cdots , \zeta_d^{a_n} x_n \big{)}, 
\]
where $\zeta_d$ is a primitive $d$-th root of unity. Then ${\mathbbm A}_k^n (x_1, x_2, \cdots , x_n) / \! / {\bf \mu}_d (1,a_2, \cdots , a_n)$ contains an ${\mathbbm A}^{n-1}$-cylinder isomorphic to ${\mathbbm A}_k^{n-1} \times ( {\mathbbm A}_k^1 \backslash \{ o \} )$. 
\end{lem}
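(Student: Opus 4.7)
The key observation is that the first weight being $1$ makes the ${\bf \mu}_d$-action free on the open locus where $x_1 \neq 0$, so that on this locus the quotient acquires an explicit cylinder structure.

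The plan is to exhibit the cylinder as the image in the quotient of the open subset $U := \{ x_1 \neq 0 \} \subseteq {\mathbbm A}_k^n$. Since the generator $\zeta_d$ of ${\bf \mu}_d$ sends $x_1$ to $\zeta_d x_1$, no non-trivial element of ${\bf \mu}_d$ fixes a point of $U$; hence ${\bf \mu}_d$ acts freely on $U$, and the image of $U$ in the quotient is a smooth open subvariety naturally isomorphic to the geometric quotient $U/{\bf \mu}_d$. To identify $U/{\bf \mu}_d$ concretely, I would introduce the ${\bf \mu}_d$-invariant regular functions on $U$
\[
t := x_1^d, \qquad y_j := x_j\, x_1^{-a_j} \quad (j = 2, \ldots , n),
\]
together with the associated morphism $\varphi : U \to ({\mathbbm A}_k^1 \backslash \{ o \}) \times {\mathbbm A}_k^{n-1}$ sending $(x_1, \ldots , x_n)$ to $(t, y_2, \ldots , y_n)$. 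A direct inspection shows that the fiber of $\varphi$ over $(t_0, y_{2,0}, \ldots , y_{n,0})$ consists of the $d$ points $(\xi, y_{2,0}\, \xi^{a_2}, \ldots , y_{n,0}\, \xi^{a_n})$ with $\xi^d = t_0$, and these form exactly one ${\bf \mu}_d$-orbit. Consequently $\varphi$ is a ${\bf \mu}_d$-torsor, and the induced morphism $\overline{\varphi} : U/{\bf \mu}_d \to ({\mathbbm A}_k^1 \backslash \{ o \}) \times {\mathbbm A}_k^{n-1}$ is a bijective morphism between smooth varieties of the same dimension, hence an isomorphism.

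There is no real obstacle: the whole argument rests on identifying the correct invariant coordinates $t, y_2, \ldots , y_n$, after which everything follows formally from the freeness of the action on $U$. The only mild technicality is passing from bijectivity of $\overline{\varphi}$ to the conclusion that it is an isomorphism, but since both source and target are smooth and normal of the same dimension, and $\overline{\varphi}$ is a bijective morphism that becomes an isomorphism after pulling back along the étale cover $U \to U/{\bf \mu}_d$, this is automatic.
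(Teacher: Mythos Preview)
Your proof is correct. Both your argument and the paper's identify the same open subset of the quotient --- the image of $\{x_1 \neq 0\}$ --- as the desired cylinder, but the routes differ. You proceed algebraically: since the first weight is $1$, the action is free on $U=\{x_1\neq 0\}$, and the invariant functions $t=x_1^d$ and $y_j=x_j x_1^{-a_j}$ directly identify $U/{\bf \mu}_d$ with $({\mathbbm A}_k^1\backslash\{o\})\times{\mathbbm A}_k^{n-1}$. (In fact one can bypass the bijectivity-plus-smoothness step entirely by noting that these functions visibly generate the invariant ring $k[x_1^{\pm 1},x_2,\ldots,x_n]^{{\bf \mu}_d}$.) The paper instead argues geometrically: it takes the weighted blow-up $\widetilde{X}\to X$ at the origin, observes that $\widetilde{X}$ is a ${\mathbbm P}^1$-bundle over $Y={\mathbbm P}(1,a_2,\ldots,a_n)$, trivializes this bundle over the affine chart $Y\backslash\{x_1=0\}\cong{\mathbbm A}_k^{n-1}$, and then removes the exceptional section to recover $X$ minus a divisor. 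Your approach is shorter and more self-contained; the paper's approach makes the ambient toric geometry (and the role of the weighted projective space at infinity) more transparent, which fits its later use in the applications.
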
 
\begin{proof} 
The argument is similar to that in \cite{KPZ1} or \cite{KKW+}. Let $X:= {\mathbbm A}_k^n (x_1, x_2, \cdots , x_n) / \! / {\bf \mu}_d (1,a_2, \cdots , a_n)$ and $Y:={\mathbbm P}(1_{x_1}, {a_2}_{x_2}, \cdots, {a_n}_{x_n})$, and let:
\[
p: X\backslash \{ o \}  \, \longrightarrow \, Y 
\]
be the natural projection associated to the ${\mathbbm G}_m$-quotient. On the other hand, let:
\[
\mu : \widetilde{X} \longrightarrow X 
\]
be the $\frac{1}{d}(1, a_2, \cdots , a_n)$-blowing-up at the singularity $o \in X$ with the exceptional divisor $\tilde{E} \cong Y$, so that $\tilde{X}$ is equipped with a ${\mathbbm P}^1$-bundle structure:
\[
\pi: \widetilde{X} \longrightarrow Y. 
\]
Let us put $H:= {\mathbbm V}_+ (x_1) \subseteq Y$. 
Then all fibers of the restriction:
\[
{\pi}|_{\widetilde{X} \backslash \pi^\ast (H) }: \widetilde{X} \backslash \pi^\ast (H) \longrightarrow Y \backslash H \cong {\rm Spec} \Big{(} k \Big{[} \frac{x_2}{x_1^{a_2}}, \cdots, \frac{x_n}{x_1^{a_n}} \Big{]} \Big{)} \cong {\mathbbm A}_k^{n-1} 
\]  
are isomorphic to the affine line over the respective residue fields, hence ${\pi}|_{\widetilde{X} \backslash \pi^\ast (H) }$ is a trivial ${\mathbbm A}^1$-bundle:
\[
\widetilde{X} \backslash \pi^\ast (H) \cong (Y \backslash H) \times {\mathbbm A}_k^1 \cong 
 {\rm Spec} \Big{(} k \Big{[} \frac{x_2}{x_1^{a_2}}, \cdots, \frac{x_n}{x_1^{a_n}} \Big{]} \Big{)} \times {\rm Spec} \big{(} k [ t ] \big{)}
\]
Note that the restriction $\tilde{E} \cap ( \widetilde{X} \backslash \pi^\ast (H) )$ corresponds to $(Y \backslash H) \times \{ \ast \}$ via the above isomorphism. Therefore we have:
\[
X \backslash H \cong \widetilde{X} \backslash ( \pi^\ast (H) \cup \widetilde{E}) \cong ({\mathbbm A}_k^{n-1} \times {\mathbbm A}_k^1) \backslash ({\mathbbm A}_k^{n-1} \times \{ \ast \} ) \cong {\mathbbm A}_k^{n-1} \times {\rm Spec} \big{(} k [t^{\pm} ] \big{)}, 
\]
in particular, $X$ contains an ${\mathbbm A}^{n-1}$-cylinder isomorphic to ${\mathbbm A}_k^{n-1} \times ( {\mathbbm A}_k^1 \backslash \{ o \} )$ as desired. 
\end{proof} 
The following lemma, which is used to show that several Fano threefolds do not contain the affine $3$-space ${\mathbbm A}_k^3$, seems well known to affine algebraic geometers. 
\begin{lem}\label{lem:am} 
Let $U$ be the affine algebraic threefold defined by: 
\[
st + f(u,v) =0 
\]
in ${\mathbbm A}_k^4={\rm Spec}(k[s,t,u,v])$, where $f (u,v) \in k[u,v]$ is a non-constant polynomial in two variables $u,v$. Then $U$ is isomorphic to the affine $3$-space ${\mathbbm A}_k^3$ if and only if $f(u,v)=0$ defines scheme-theoretically the affine line in the affine plane ${\mathbbm A}_k^2={\rm Spec}(k[u,v])$.
\end{lem}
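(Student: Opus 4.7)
The plan is to handle the two implications separately. For the forward direction (if $V(f)$ is scheme-theoretically ${\mathbbm A}_k^1$, then $U\cong {\mathbbm A}_k^3$), by the Abhyankar--Moh--Suzuki theorem we may, after an automorphism of ${\mathbbm A}_k^2$, write $k[u,v]=k[g,h]$ with $f=cg$ for some $c\in k^{\ast}$; eliminating $g$ in $k[s,t,g,h]/(st+cg)$ via $g=-st/c$ then yields $k[U]\cong k[s,t,h]$, so $U\cong {\mathbbm A}_k^3$.

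For the converse, I would assume $U\cong {\mathbbm A}_k^3$ and proceed in three steps. Step~1 (smoothness): the Jacobian of $F=st+f(u,v)$ vanishes simultaneously with $F$ only when $s=t=0$ and $f=f_u=f_v=0$, so smoothness of $U$ forces $V(f,f_u,f_v)=\varnothing$ in ${\mathbbm A}_k^2$; hence $V(f)$ is scheme-theoretically smooth and $f$ is reduced. Step~2 (irreducibility via class groups): inverting $s$ gives $U\setminus V(s)\cong \Spec(k[s^{\pm 1},u,v])\cong {\mathbbm G}_m\times {\mathbbm A}_k^2$, which has trivial divisor class group. Writing $f=g_1\cdots g_r$ as a product of distinct irreducibles, the prime components of $V(s)\subseteq U$ are the divisors $V(s,g_i)$, and since the units on ${\mathbbm G}_m\times {\mathbbm A}_k^2$ are only scalar multiples of powers of $s$, the only principal divisor supported on $V(s)$ is (up to a factor) $\operatorname{div}(s)=\sum_i [V(s,g_i)]$. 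The excision sequence for class groups then gives $\operatorname{Cl}(U)\cong {\mathbbm Z}^{r-1}$; since $\operatorname{Cl}({\mathbbm A}_k^3)=0$, we conclude $r=1$, so $f$ is irreducible.

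Step~3 (fixed-point analysis) is the crux. The ${\mathbbm G}_m$-action $(s,t)\mapsto(\lambda s,\lambda^{-1}t)$ preserves $U$, and its scheme of fixed points is $V(s,t)\cap U=V(f)\subseteq {\mathbbm A}_k^2\subseteq U$. Transporting this action along the isomorphism $U\cong {\mathbbm A}_k^3$ and invoking the Koras--Russell linearization theorem for ${\mathbbm G}_m$-actions on ${\mathbbm A}_k^3$, we may assume the action is diagonal with integer weights $(a,b,c)$; the fixed locus is then a coordinate subspace whose dimension equals the number of vanishing weights. Since $f$ is non-constant, $V(f)$ is $1$-dimensional, forcing exactly one weight to vanish, and the fixed axis is ${\mathbbm A}_k^1$; hence $V(f)\cong {\mathbbm A}_k^1$ as an abstract scheme. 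A final application of Abhyankar--Moh--Suzuki upgrades this to the desired scheme-theoretic statement: the closed embedding ${\mathbbm A}_k^1\hookrightarrow {\mathbbm A}_k^2$ is, up to an automorphism of ${\mathbbm A}_k^2$, the standard coordinate embedding, so $f$ equals a coordinate of $k[u,v]$ times a scalar.

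The principal obstacle is the invocation of the Koras--Russell linearization theorem, a non-trivial input from affine algebraic geometry. A more elementary alternative replaces it with the topological facts that the ${\mathbbm G}_m$-fixed locus of a contractible smooth variety is contractible and that a smooth contractible affine curve is isomorphic to ${\mathbbm A}_k^1$; either way, the key point is to pin down the abstract isomorphism type of the fixed curve $V(f)$, after which Abhyankar--Moh--Suzuki converts an abstract isomorphism into a concrete one on embeddings.
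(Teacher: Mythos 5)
Your argument is correct, but it follows a genuinely different route from the paper's. The paper's converse is much shorter: it restricts the projection to the $t$-axis to get $p\colon U\to{\mathbbm A}_k^1$, observes that every fiber over $t\neq 0$ is the graph $s=-f(u,v)/t$ and hence isomorphic to ${\mathbbm A}_k^2$, and then invokes Kaliman's theorem that a polynomial on ${\mathbbm A}^3$ with general fibers ${\mathbbm A}^2$ has \emph{all} fibers ${\mathbbm A}^2$; since the zero fiber is $\operatorname{Spec}(k[s,u,v]/(f))\cong V(f)\times{\mathbbm A}_k^1$, cancellation for curves identifies $V(f)$ with ${\mathbbm A}_k^1$. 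Your route instead extracts reducedness and smoothness from the Jacobian criterion, irreducibility from the excision sequence for class groups (both steps are carried out correctly, including the multiplicity computation $\operatorname{ord}_{V(s,g_i)}(s)=1$), and then identifies the fixed curve of the hyperbolic ${\mathbbm G}_m$-action via the Koras--Russell--Kaliman--Makar-Limanov linearization theorem. This is logically sound, but it is a far heavier input than Kaliman's fiber theorem --- indeed the hard case of that linearization theorem is precisely the hyperbolic one, and its proof required showing that the Koras--Russell threefolds (themselves smooth contractible hypersurfaces carrying hyperbolic ${\mathbbm G}_m$-actions) are not ${\mathbbm A}^3$; note also that once linearization is granted, your Steps 1 and 2 become redundant, since the fixed scheme of a linear action is automatically a reduced coordinate subspace. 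Your ``elementary alternative'' is the better-balanced version: Smith theory gives that the fixed set is ${\mathbbm Q}$-acyclic (not literally contractible, but connected with $b_1=0$, which for the smooth affine curve supplied by Steps 1--2 forces genus $0$ with one place at infinity, i.e.\ ${\mathbbm A}_k^1$); there Steps 1 and 2 do real work. For either variant one should add a word on the Lefschetz principle, since both the linearization theorem and the topological argument are stated over ${\mathbbm C}$ while the lemma is over an arbitrary algebraically closed field of characteristic zero; the conclusion $k[u,v]/(f)\cong k[x]$ descends without difficulty. Your treatment of the forward direction and the final normalization via Abhyankar--Moh--Suzuki agrees with the paper's.
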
 
\begin{proof} 
Assume that $f(u,v)=0$ defines scheme-theoretically the affine line in ${\mathbbm A}_k^2={\rm Spec}(k[u,v])$. Then a theorem of Abhyankar-Moh-Suzuki (see for instance \cite[Chapter 2]{Miy78}), we may assume from the beginning that $f(u,v)=u$, hence $U \cong {\mathbbm A}_k^3$ obviously. Conversely, suppose that $U$ is isomorphic to the affine $3$-space ${\mathbbm A}_k^3$. We look at the restriction of the projection to the $t$-axis ${\rm pr}_t :{\mathbb A}_k^4={\rm Spec}(k[u,v,s,t]) \to {\mathbbm A}_k^1={\rm Spec}(k [t])$ onto $U$, say:
\[
p:= {{\rm pr}_t}|_U: \, U \, \longrightarrow \, {\mathbb A}_k^1={\rm Spec}(k [t]). 
\]
Any closed fiber of $p$ other than that over the origin is isomorphic to ${\mathbbm A}_k^2$, which implies that all fibers of $p$ are isomorphic to ${\mathbb A}_k^2$ by \cite{Kaliman}, in particular $p^\ast (o)\cong {\mathbbm A}_k^2$. This implies in turn that the affine curve defined by $f(u,v)=0$ in ${\mathbbm A}_k^2={\rm Spec}(k[u,v])$ is isomorphic to the affine line. 
\end{proof} 
The next lemme is also useful to disprove the existence of the affine $3$-space ${\mathbbm A}_k^3$ contained in certain weighted hypersurface Fano threefolds in the families $(\spadesuit)$ combined with Lemma \ref{lem:am}. 
\begin{lem}\label{lem:cr} 
Let $X \subseteq {\mathbbm P}= {\mathbbm P} (a_0, a_1, \cdots, a_{n+1})$ be a quasi-smooth, well-formed weighted hypersurface of degree $d$, where weights satisfy $a_{n+1} \geqq \cdots \geqq a_1 \geqq a_0 >0$ with $n \geqq 3$. If $X$ contains an open subset isomorphic to the affine $n$-space ${\mathbbm A}_k^n$, then $a_0=1$ and the complement $H=X \backslash U$ is a restriction of a suitable hypersurface of degree $1$ in ${\mathbbm P}$.


\end{lem}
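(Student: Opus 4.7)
The plan is to pin down the divisorial part of $H$ by a class-group argument and then use cohomology on the ambient weighted projective space to force the existence of a variable of weight $1$. First I let $D_{1}, \ldots, D_{r}$ be the prime divisors contained in $H := X \setminus U$ and set $D := D_{1} + \cdots + D_{r}$, so that $U$ differs from $X \setminus D$ by a closed subset of codimension $\geq 2$. Normality together with Hartogs yields $\Gamma(X \setminus D, \mathcal{O})^{\times} = \Gamma(U, \mathcal{O})^{\times} = k^{\times}$ and $\mathrm{Cl}(X \setminus D) = \mathrm{Cl}(U) = 0$, whence the localization sequence for divisor class groups collapses to
\[
0 \longrightarrow \bigoplus_{i=1}^{r} {\mathbbm Z}[D_{i}] \longrightarrow \mathrm{Cl}(X) \longrightarrow 0.
\]
Since $X$ is well-formed and quasi-smooth of dimension $n \geq 3$, the Lefschetz-type theorem for weighted hypersurfaces gives $\mathrm{Cl}(X) \cong {\mathbbm Z}\cdot [\mathcal{O}_{X}(1)]$, so I conclude $r = 1$ and $[D_{0}] := [D_{1}]$ equals the ample generator $A := [\mathcal{O}_{X}(1)]$.

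Next I invoke the ambient restriction sequence
\[
0 \to \mathcal{O}_{{\mathbbm P}}(1 - d) \to \mathcal{O}_{{\mathbbm P}}(1) \to \mathcal{O}_{X}(1) \to 0
\]
together with the vanishing $H^{1}({\mathbbm P}, \mathcal{O}_{{\mathbbm P}}(m)) = 0$, valid for any twist on a weighted projective space of dimension $\geq 2$ (here $\dim {\mathbbm P} = n+1 \geq 4$), to obtain a surjection $H^{0}({\mathbbm P}, \mathcal{O}_{{\mathbbm P}}(1)) \twoheadrightarrow H^{0}(X, \mathcal{O}_{X}(1))$. The section of $\mathcal{O}_{X}(1)$ defining $D_{0}$ is nonzero, so $H^{0}({\mathbbm P}, \mathcal{O}_{{\mathbbm P}}(1)) \neq 0$: some weight equals $1$, and the ordering forces $a_{0} = 1$. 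Lifting that section to a linear form $\ell \in H^{0}({\mathbbm P}, \mathcal{O}_{{\mathbbm P}}(1))$, the scheme $V(\ell) \cap X$ is an effective divisor in class $A$ containing $D_{0}$; since $\mathrm{Cl}(X) \cong {\mathbbm Z}$, two effective divisors in the same class comparable under $\leq$ must coincide, so $V(\ell) \cap X = D_{0}$.

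To finish, I rule out any codimension-$\geq 2$ part of $H$. Because $A = [D_{0}]$ is ample, $X \setminus D_{0}$ is affine and contains $U \cong {\mathbbm A}_{k}^{n}$ with complement of codimension $\geq 2$; Hartogs then gives $\Gamma(X \setminus D_{0}, \mathcal{O}) = \Gamma(U, \mathcal{O}) = k[x_{1}, \ldots, x_{n}]$, so the open immersion of affine schemes $U \hookrightarrow X \setminus D_{0}$ inducing an isomorphism on coordinate rings must be an equality. Consequently $H = D_{0}$ is the restriction of the degree-$1$ hypersurface $V(\ell) \subseteq {\mathbbm P}$, as claimed. The main obstacle I anticipate is keeping track of the interplay between the rings of regular functions on $U$, $X \setminus D$ and $X \setminus D_{0}$: verifying that Hartogs applies across each codimension-$\geq 2$ excision, and that the Lefschetz-type identification $\mathrm{Cl}(X) \cong {\mathbbm Z}\cdot [\mathcal{O}_{X}(1)]$ truly holds under ``quasi-smooth and well-formed of dimension $\geq 3$''. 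Once these ingredients are confirmed, the remaining steps are routine bookkeeping.
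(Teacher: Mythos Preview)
Your proof is correct and follows essentially the same route as the paper's: both use the Lefschetz-type identification $\mathrm{Cl}(X)\cong\mathbbm{Z}\cdot[\mathcal{O}_X(1)]$ (the paper cites \cite[Theorem~2.15]{PS21} for this) together with $\mathrm{Cl}(U)=0$ to pin down the boundary as a single prime divisor in class $[\mathcal{O}_X(1)]$, and then read off $a_0=1$ from $H^0(X,\mathcal{O}_X(1))\neq 0$. Your version is more thorough in two respects: you justify the injectivity in the localization sequence via the units computation, and you explicitly rule out any codimension-$\geq 2$ piece of $H$ by the affine Hartogs argument, whereas the paper simply asserts that $H$ is irreducible and leaves the rest as ``easy to see''; the paper also replaces your restriction-sequence step by the direct identification $S_1\cong H^0(X,\mathcal{O}_X(1))$ for the graded coordinate ring $S$, which amounts to the same thing.
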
 
\begin{proof} 
At first, we note that the class group ${\rm Cl}(X)$ of $X$ is free generated by the class of the reflexive sheaf ${\mathscr O}_X (1):={\mathscr O}_{{\mathbbm P}}(1)|_X$, see e.g., \cite[Theorem 2.15]{PS21}. Assume now that $X$ contains an open subset $U$, which is isomorphic to ${\mathbbm A}_k^n$. 
Since the coordinate ring $k[U]$ of $U$ is UFD and $X$ is of rank one, ${\rm Cl}(X)$ is free generated by the boundary divisor $H:= X \backslash U$. As $X$ is ${\mathbbm Q}$-factorial (because $X$ is assumed to be quasi-smooth, in particular $X$ has only quotient singularities), $H$ is actually irreducible. This implies in turn that the degree one piece:
\[
S_1 \cong H^0 \big{(} X, {\mathscr O}_X (1) \big{)}
\]
of the graded $k$-algebra:
\[
S= k \big{[} x_0, x_1, \cdots , x_{n+1} \big{]} \big{/} \big{\langle} f \big{\rangle} =\bigoplus_{\ell \geqq 0} S_\ell, 
\]
is nonzero. Therefore we have $a_0=1$. The remaining assertion is now easy to see. 
\end{proof} 
\begin{rem}\label{rem:cr}
It is important to assume that a quasi-smooth weighted hypersurface $X$ in a well-formed weighted projective space ${\mathbbm P}$ is well-formed. For instance, we consider:
\[
X=\{ x=0 \} \subseteq {\mathbbm P}(2_x, 3_y,3_z, 3_t, 5_w), 
\]
which is quasi-smooth but not well-formed. On the other hand:
\[
X \cong {\mathbbm P}(3_y,3_z,3_t,5_w) \cong {\mathbbm P}(1,1,1,5)
\]
contains the affine $3$-space.
\end{rem}
\subsection{Cylindricity of weighted hypersurface Fano threefolds from the families $(\spadesuit)$}\label{2-2} 
In this subsection, we will show that all members from the families $(\spadesuit)$ contain at least ${\mathbbm A}^2$-cylinder. More precisely, we will divide the families of $(\spadesuit)$ into two parts: families whose any member contains the affine $3$-space ${\mathbbm A}_k^3$, and those whose any member does not contain ${\mathbbm A}_k^3$ but contains an open subset isomorphic to $({\mathbbm A}_k^1 \backslash \{ 0 \}) \times {\mathbbm A}_k^2$. The argument lies in case by case analysis depending on the families in $(\spadesuit)$ combined with Lemmas in the previous subsection. 
\subsubsection{{\rm No. 104}}\label{104} 
Let $X$ be in the family No. 104. Then we may assume that:
\[
X_2: xy+z^2+t^2+w^2=0 \subseteq {\mathbbm P}(1_x,1_y,1_z,1_t,1_w). 
\]
Hence $X \backslash {\mathbbm V}_+ (x)$ is isomorphic to ${\mathbbm A}_k^3$. 

\subsubsection{{\rm No. 105}}\label{105} 
Let $X$ be in the family No. 105. Then we may assume that $X$ is defined by:
\[
X_3: tw+f(x,y,z) =0 \subseteq {\mathbbm P}(1_x,1_y,1_z,1_t,2_w), 
\]
where $f(x,y,z)$ is a cubic homogeneous polynomial. The quasi-smoothness condition implies that the plane curve $C:= {\mathbbm V}_+(f) \subseteq {\mathbbm P}(1_x,1_y,1_z)$ is a smooth elliptic curve. The affine chart $X \cap {\mathbbm D}_+ (t)$ is obviously isomorphic to ${\mathbbm A}_k^3$.

\subsubsection{{\rm No. 106}}\label{106} 
Let $X$ be in the family No. 106. Then we may assume that $X$ is defined by:
\[
X_4: tw+f(x,y,z) =0 \subseteq {\mathbbm P}(1_x,1_y,1_z,2_t,2_w), 
\]
where $f(x,y,z)$ is a quartic homogeneous polynomial. The quasi-smoothness of $X$ implies that the plane curve $C:= {\mathbbm V}_+(f) \subseteq {\mathbbm P}(1_x,1_y,1_z)$ is a smooth curve of genus $g(C)=3$. Notice that the affine chart $X \cap {\mathbbm D}_+ (w)$ is isomorphic to:
\[
{\mathbbm A}_k^3 (x,y,z) / \! / {\bf \mu}_2 (1,1,1), 
\]
hence $X \cap {\mathbbm D}_+ (w)$ contains an ${\mathbbm A}_k^2$-cylinder isomorphic to $({\mathbbm A}_k^1 \backslash \{ o \} ) \times {\mathbbm A}_k^2$ by Lemma \ref{lem:cylinder}. On the other hand, if $X$ contains an open subset $U$ isomorphic to ${\mathbbm A}_k^3$, then the complement $H=X \backslash U$ would be a restriction of a suitable hypersurface of degree $1$, see Lemma \ref{lem:cr}. Hence we may assume that $H={\mathbbm V}_+ (x) \cap X$ by means of change of coordinates. Then $U$ is isomorphic to the hypersurface in ${\mathbbm A}_k^4={\rm Spec}(k [y,z,t,w])$ defined by:
\[
tw+f(1,y,z) =0. 
\]
By Lemma \ref{lem:am}, the polynomial $f(1,y,z) \in k[y,z]$ should define the affine line in ${\mathbbm A}_k^2={\rm Spec}(k[y,z])$. This is absurd because $C$ is not rational. 

\subsubsection{{\rm No. 111}}\label{111} 
Let $X$ be in the family No. 111. Then we may assume that $X$ is defined by:
\[
X_4: zw+t^2 + f_4(x,y) =0 \subseteq {\mathbbm P}(1_x,1_y,1_z,2_t,3_w), 
\]
where $f_4(x.y)$ is a homogeneous polynomial of degree $4$ in $x,y$. 
It is then easy to find the affine $3$-space in $X$, indeed, the affine chart $X \cap {\mathbbm D}_+ (z)$ is isomorphic to ${\mathbbm A}_k^3$.

\subsubsection{{\rm No. 112}}\label{112} 
Let $X$ be in the family No. 112. Then we may assume that $X$ is defined by:
\[
X_6: tw+f_6(x,y,z) =0 \subseteq {\mathbbm P}(1_x,1_y,2_z,3_t,3_w), 
\]
where $f_6(x,y,z)$ is a quasi-homogeneous polynomial of degree $6$ with respect to the weight ${\rm wt}(x,y,z)=(1,1,2)$ such that the curve $C:= {\mathbbm V}_+ (f_6) \subseteq {\mathbbm P}(1_x,1_y,2_z)$ is quasi-smooth. The affine chart $X \cap {\mathbbm D}_+ (w)$ is isomorphic to:
\[
{\mathbbm A}_k^3 (x,y,z) / \! / {\bf \mu}_3 (1,1,2) 
\]
Hence $X$ contains the open subset isomorphic to ${\mathbbm A}_k^{2} \times ( {\mathbbm A}_k^1 \backslash \{ o \} )$ by Lemma \ref{lem:cylinder}. 
Now we write:
\[
f_6(x,y,z) = a z^3+ g_2(x,y)z^2+g_4(x,y)z+g_6(x,y), 
\]
where $a \in k$ and $g_i \in k[x,y]$ is a homogeneous polynomial of degree $i$ in $x, y$. The curve $C \subseteq {\mathbbm P}(1_x,1_y,2_z)$ being quasi-smooth, we have $a \neq 0$, so we may assume that:
\[
f_6(x,y,z)=z^3+ g_4(x,y) z + g_6(x,y), 
\]
which says that $C$ is smooth of genus $g(C)=4$. This allows to conclude that $X$ does not contain ${\mathbbm A}_k^3$. Indeed, assume on the contrary that $X$ contains an open subset $U$ isomorphic to ${\mathbbm A}_k^3$. Then we may assume that the boundary $H:=X \backslash U$ is the restriction of ${\mathbbm V}_+ (x)$ up to change of coordinates by Lemma \ref{lem:cr}, hence $U$ is isomorphic to the affine hypersurface in ${\mathbbm A}_k^4={\rm Spec}(k[y,z,t,w])$ defined by:
\[
tw+ f_6 (1,y,z) =0. 
\]
Lemma \ref{lem:am} then implies that $f_6(1,y,z)=0$ defines the affine line in ${\mathbbm A}_k^2={\rm Spec}(k[y,z])$. This is a contradiction because $C$ is not rational.


\subsubsection{{\rm No. 113}}\label{113} 
Let $X$ be in the family No. 113. Then in consideration of the quasi-smoothness of $X$, we may assume that $X$ is defined by:
\[
X_4: yw+t^2 + z^2 + x^4 =0 \subseteq {\mathbbm P}(1_x,1_y,2_z,2_t,3_w). 
\]
Then the affine chart $X \cap {\mathbbm D}_+ (y)$ is isomorphic to ${\mathbbm A}_k^3$. 

\subsubsection{{\rm No. 114}}\label{114} 
Let $X$ be in the family No. 114. As $X$ is assumed to be quasi-smooth, we may assume that $X$ is defined by:
\[
X_6: zw+t^2+f_6(x,y)  \subseteq {\mathbbm P}(1_x,1_y,2_z,3_t,4_w), 
\]
where $f_6(x,y)$ is a homogeneous polynomial of degree $6$ such that $f_6(x,y)=0$ yields distinct six points on ${\mathbbm P}_k^1={\rm Proj} (k [x,y])$. Then the curve $C:= {\mathbbm V}_+ (t^2+f_6) \subseteq {\mathbbm P}(1_x,1_y,3_t)$ is smooth of genus $g(C)=2$. Note that the affine chart $X \cap {\mathbbm D}_+ (w)$ is isomorphic to:
\[
{\mathbbm A}_k^3 (x,y,t) / \! / {\bf \mu}_4 (1,1,3), 
\]
which contains an ${\mathbbm A}^2$-cylinder isomorphic to ${\mathbbm A}_k^2 \times ({\mathbbm A}_k^1 \backslash \{ o \})$ by Lemma \ref{lem:cylinder}. Now supposing that $X$ contains an open subset $U$ isomorphic to ${\mathbbm A}_k^3$, Lemma \ref{lem:cr} implies that the boundary $H:=X \backslash U$ would be the restriction of ${\mathscr O}(1)$. Hence by a suitable change of coordinates, we may assume that $H={\mathbbm V}_+ (x) \cap X$, so that $U$ is isomorphic to the following affine hypersurface in ${\mathbbm A}_k^4={\rm Spec}(k [y,z,t,w])$:
\[
zw+t^2+f_6(1,y)=0, 
\]
which is never isomorphic to ${\mathbbm A}_k^3$ by virtue of Lemma \ref{lem:am} because $C$ is not rational. This is a contradiction. 

\subsubsection{{\rm No. 115}}\label{115} 
Let $X$ be in the family No. 115. Since $X$ is quasi-smooth, we may assume that $X$ is defined by:
\[
X_6: tw+f_3(y,z)+f_2(y,z)x^2+f_1(y,z)x^4+cx^6=0   \subseteq {\mathbbm P}(1_x,2_y,2_z,3_t,3_w), 
\]
where $f_i(y,z)$ is a homogeneous polynomial in $y,z$ of degree $i$ and $c \in k$. Furthermore, we may assume that $f_3(y,z)=y^3+z^3$ and at least one of $f_1(y,z)$ and $c$ is non-zero, in other words, the curve:
\[
C:={\mathbbm V}_+ (f_3(y,z)+f_2(y,z)x+f_1(y,z)x^2+cx^3) \subseteq {\mathbbm P}(1_x,1_y,1_z) 
\]
is a smooth elliptic curve. The affine chart $X \cap {\mathbbm D}_+ (w)$ is isomorphic to:
\[
{\mathbbm A}_k^3 (x,y,z) / \! / {\bf \mu}_3 (1,2,2), 
\]
which contains an ${\mathbbm A}^2$-cylinder isomorphic to ${\mathbbm A}_k^2 \times ({\mathbbm A}_k^1 \backslash \{ o \})$ by Lemma \ref{lem:cylinder}. Supposing that $X$ contains an open subset $U$ isomorphic to ${\mathbbm A}_k^3$, Lemma \ref{lem:cr} implies that the boundary $H:=X \backslash U$ would be the restriction of ${\mathscr O}(1)$. Hence we have $H={\mathbbm V}_+ (x) \cap X$, so that $U$ is isomorphic to the affine hypersurface in ${\mathbbm A}_k^4={\rm Spec}(k [y,z,t,w])$ defined by:
\[
tw+f_3(y,z)+f_2(y,z)+f_1(y,z)+ c=0, 
\]
which is not isomorphic to ${\mathbbm A}_k^3$ by virtue of Lemma \ref{lem:am} since $C$ is not rational. This contradiction means that $X$ never contains ${\mathbbm A}_k^3$. 

\subsubsection{{\rm No. 118}}\label{118} 
Let $X$ be in the family No. 118. Since $X$ is assumed to be quasi-smooth, we may assume that $X$ is defined as follows:
\[
X_6:  yw+t^2+z(z+x^2)(z+cx^2)=0  \subseteq {\mathbbm P}(1_x,1_y,2_z,3_t,5_w), 
\] 
with $c \in k \backslash \{ 0, 1 \}$. Then the affine chart $X \cap {\mathbbm D}_+ (y)$ is obviously isomorphic to ${\mathbbm A}_k^3$. 

\subsubsection{{\rm No. 119}}\label{119} 
Let $X$ be in the family No. 119. Since $X$ is quasi-smooth, we may assume that $X$ is defined as follows:
\[
X_6: xw+t^2+yz(y+z) =0 \subseteq {\mathbbm P}(1_x,2_y,2_z,3_t,5_w), 
\]
Then the affine chart $X \cap {\mathbbm D}_+ (x)$ is isomorphic to ${\mathbbm A}_k^3$.

\subsubsection{{\rm No. 120}}\label{120} 
Let $X$ be in the family No. 120. The quasi-smoothness of $X$ implies that $X$ is defined by:
\[
X_6: yw+zt+x^6 =0 \subseteq {\mathbbm P}(1_x,2_y,3_z,3_t,4_w). 
\]
Then the affine chart $X \cap {\mathbbm D}_+ (y)$ is isomorphic to:
\[
{\mathbbm A}_k^3 (x,z,t) / \! / {\bf \mu}_2 (1,1,1), 
\]
hence it contains an open subset isomorphic to ${\mathbbm A}_k^{2} \times ( {\mathbbm A}_k^1 \backslash \{ o \} )$ by Lemma \ref{lem:cylinder}. On the other hand, assuming that $X$ contains an open subset $U$ isomorphic to ${\mathbbm A}_k^3$, it must be that the boundary $H=X \backslash U$ coincides with the restriction ${\mathbbm V}_+ (x) \cap X$ by Lemma \ref{lem:cr}, thence $U$ is isomorphic to the hypersurface:
\[
yw+zt+1=0 
\]
in ${\mathbbm A}_k^4={\rm Spec}(k [y,z,t,w])$. This is a contradiction combined with Lemma \ref{lem:am} as the affine curve defined by $zt+1=0$ in ${\mathbbm A}_k^2={\rm Spec}(k[y,z])$ is not isomorphic to ${\mathbbm A}_k^1$. 

\subsubsection{{\rm No. 121}}\label{121} 
Let $X$ be in the family No. 121. Since $X$ is assumed to be quasi-smooth, $X$ is described as follows:
\[
X_8: zw+t^2+y(y+a_1x^2)(y+a_2x^2)(y+a_3x^2)=0 \subseteq {\mathbbm P}(1_x,2_y,3_z,4_t,5_w),  
\]
where $a_1, a_2, a_3 \in k \backslash \{ 0 \}$ are mutually distinct. Then the affine chart $X \cap {\mathbbm D}_+ (z)$ being isomorphic to:
\[
{\mathbbm A}_k^3 (x,y,t) / \! / {\bf \mu}_3 (1,2,1), 
\]
it contains an open subset isomorphic to ${\mathbbm A}_k^{2} \times ( {\mathbbm A}_k^1 \backslash \{ o \} )$ by Lemma \ref{lem:cylinder}. On the other hand, supposing that $X$ contains an open subset $U$ isomorphic to ${\mathbbm A}_k^3$, the boundary divisor $H=X \backslash U$ would coincide with the restriction ${\mathbbm V}_+ (x) \cap X$ by Lemma \ref{lem:cr}, thence $U$ is isomorphic to the hypersurface:
\[
zw+t^2+y(y+a_1)(y+a_2)(y+a_3)=0  
\]
in ${\mathbbm A}_k^4={\rm Spec}(k [y,z,t,w])$. But since the affine curve defined by:
\[
t^2+y(y+a_1)(y+a_2)(y+a_3)=0 
\]
in ${\mathbbm A}_k^2={\rm Spec}(k[y,t])$ is not isomorphic to ${\mathbbm A}_k^1$, this is a contradiction to Lemma \ref{lem:am}.

\subsubsection{{\rm No. 123}}\label{123} 
Let $X$ be in the family No. 123. Since $X$ is quasi-smooth, we may assume that $X$ is defined as follows:
\[
X_6: xw + zt + y^3 =0 \subseteq {\mathbbm P}(1_x,2_y,3_z,3_t,5_w). 
\]
Then the affine chart $X \cap {\mathbbm D}_+ (x)$ is isomorphic to ${\mathbbm A}_k^3$. 

\subsubsection{{\rm No. 124}}\label{124} 
Let $X$ be in the family No. 124. As $X$ is quasi-smooth, we may assume that $X$ is defined as follows:
\[
X_{10}: zw+t^2+y(y+a_1x^2)(y+a_2x^2)(y+a_3x^2)(y+a_4x^2)=0 \subseteq {\mathbbm P}(1_x,2_y,3_z,5_t,7_w), 
\]
where $a_1, a_2, a_3, a_4 \in k \backslash \{ 0 \}$ are mutually distinct constants. Then we can show that $X$ contains an open subset isomorphic to  ${\mathbbm A}_k^{2} \times ( {\mathbbm A}_k^1 \backslash \{ o \} )$, whereas $X$ does not contain ${\mathbbm A}_k^3$ by the almost same argument as in \ref{121}.  

\subsubsection{{\rm No. 125}}\label{125} 
Let $X$ be in the family No. 125. As $X$ is quasi-smooth, we may assume that $X$ is described as:
\[
X_{12} : tw+y^4+g_6(x,z)y^2+g_9(x,z)y+g_{12}(x,z) \subseteq {\mathbbm P}(1_x,3_y,4_z,5_t,7_w),  
\]
where $g_i(x,z)$ is a quasi-homogeneous polynomial of degree $i$ in $x,z$ with respect to the weight ${\rm wt}(x,z) = (1,4)$. Then by a further change of coordinates on $x$ and $z$ if necessary, we may assume that $X$ is defined by:
\[
X_{12} : tw+y^4+ z(z+x^4)(z-x^4)=0 \subseteq {\mathbbm P}(1_x,3_y,4_z,5_t,7_w)
\]
The affine chart $X \cap {\mathbbm D}_+ (w)$ being isomorphic to:
\[
{\mathbbm A}_k^3 (x,y,z) / \! / {\bf \mu}_7 (1,3,4), 
\]
it contains an open subset isomorphic to ${\mathbbm A}_k^{2} \times ( {\mathbbm A}_k^1 \backslash \{ o \} )$ by virtue of Lemma \ref{lem:cylinder}. On the other hand, suppose that $X$ contains an open subset $U$ isomorphic to ${\mathbbm A}_k^3$. Then the boundary divisor $H=X \backslash U$ would coincide with the restriction ${\mathbbm V}_+ (x) \cap X$ by Lemma \ref{lem:cr}. Thus $U$ is isomorphic to the hypersurface:
\[
tw+y^4+z(z+1)(z-1) =0 \, \subseteq \, {\mathbbm A}_k^4={\rm Spec}(k [y,z,t,w]). 
\]
Lemma \ref{lem:am} then says that the affine curve in ${\mathbbm A}_k^2={\rm Spec}(k [x,z])$ defined by:
\[
y^4+z(z+1)(z-1)=0
\]
should be isomorphic to ${\mathbbm A}_k^1$, which is obviously absurd for instance by the fact that the curve:
\[
C:={\mathbbm V}_+ \big{(} y^4+z(z+x^4)(z-x^4) \big{)} \subseteq {\mathbbm P} \big{(} 1_x, 3_y, 4_z \big{)}  
\]
is a smooth curve of genus $g(C)=3$. 

\subsubsection{{\rm No. 126}}\label{126} 
Let $X$ be in the family No. 126. Since $X$ is quasi-smooth, $X$ can be assumed to be defined by:
\[
X_{6}: xw+yt+z^2=0 \subseteq {\mathbbm P}(1_x,2_y,3_z,4_t,5_w), 
\]
Then affine chart $X \cap {\mathbbm D}_+ (x)$ is isomorphic to ${\mathbbm A}_k^3$. 

\subsubsection{{\rm No. 127}}\label{127} 
Let $X$ be in the family No. 127. This case can be dealt with by almost same argument as in \ref{125}, but we need to be more careful because the lowest degree among coordinates of the ambient weighted projective space is greater than $1$. Since $X$ is quasi-smooth, we may assume that $X$ is defined by:
\[
X_{12} : tw+y^4+z(z+x^2)(z-x^2)=0 \subseteq {\mathbbm P}(2_x,3_y,4_z,5_t,7_w). 
\]
The affine chart $X \cap {\mathbbm D}_+ (w)$ is isomorphic to:
\[
{\mathbbm A}_k^3 (x,y,z) / \! / {\bf \mu}_7 (2,3,4) \cong 
{\mathbbm A}_k^3 (x,y,z) / \! / {\bf \mu}_7 (1,5,2), 
\]
which contains an ${\mathbbm A}^2$-cylinder isomorphic to ${\mathbbm A}_k^{2} \times ( {\mathbbm A}_k^1 \backslash \{ o \} )$ by Lemma \ref{lem:cylinder}. On the contrary, $X$ does not contain the affine $3$-space ${\mathbbm A}_k^3$ by virtue of Lemma \ref{lem:cr}.

\subsubsection{{\rm No. 128}}\label{128} 
Let $X$ be in the family No. 128. Since $X$ is quasi-smooth, we may assume that $X$ is defined by:
\[
X_{12}: zw+t^2 +y(y+x^4)(y-x^4)=0 \subseteq {\mathbbm P}(1_x,4_y,5_z,6_t,7_w), 
\]
Then, for instance, the affine chart $X \cap {\mathbbm D}_+ (w)$ is isomorphic to 
\[
{\mathbbm A}_k^3 (x,y,t) / \! / {\bf \mu}_7 (1,4,6), 
\]
which contains an open subset isomorphic to ${\mathbbm A}_k^{2} \times ( {\mathbbm A}_k^1 \backslash \{ o \} )$ by Lemma \ref{lem:cylinder}. Meanwhile, provided that $X$ contains an open subset $U$ such that $U \cong {\mathbbm A}_k^3$, the complement $H:= X \backslash U$ coincides with ${\mathbbm V}_+ (x) \cap X$ by Lemma \ref{lem:cr}, thence $U$ is isomorphic to:
\[
zw+ t^2+y(y+1)(y-1) =0 \, \subseteq \, {\mathbbm A}_k^4={\rm Spec} (k [y,z,t,w]). 
\]
But this hypersurface can not be isomorphic to ${\mathbbm A}_k^3$ since the affine curve defined by $t^2+y(y+1)(y-1) =0$ in ${\mathbbm A}_k^2={\rm Spec}(k[y,t])$ is not rational, see Lemma \ref{lem:am}. 

\subsubsection{{\rm No. 129}}\label{129} 
Let $X$ be in the family No. 129. As $X$ is assumed to be quasi-smooth, $X$ is defined by:
\[
X_{10}: yw+t^2+x(z+x^2)(z-x^2)=0 \subseteq {\mathbbm P}(2_x,3_y,4_z,5_t,7_w). 
\]
Then we look into the affine chart $X \cap {\mathbbm D}_+ (y)$, which is isomorphic to:
\[
 {\mathbbm A}_k^3 (x,z,t) / \! / {\bf \mu}_3 (2,4,5) \cong  {\mathbbm A}_k^3 (x,z,t) / \! / {\bf \mu}_3 (2,1,2), 
\]
which contains an open subset isomorphic to the ${\mathbbm A}^2$-cylinder ${\mathbbm A}_k^{2} \times ( {\mathbbm A}_k^1 \backslash \{ o \} )$ by Lemma \ref{lem:cylinder}. Whereas $X$ does not contain the affine $3$-space in consideration of Lemma \ref{lem:cr}.

\subsubsection{{\rm No. 130}}\label{130} 
Let $X$ be in the family No. 130. Since $X$ is quasi-smooth, $X$ is defined by:
\[
X_{12}: zw+t^2+y^3+x^4=0 \subseteq {\mathbbm P}(3_x,4_y,5_z,6_t,7_w). 
\]
Notice that $X$ never contains the affine $3$-space by Lemma \ref{lem:cr}. On the other hand, we see:
\[
X \cap {\mathbbm D}_+ (z) \cong {\mathbbm A}_k^3 (x,y,t) / \! / {\bf \mu}_5 (3,4,6) 
= {\mathbbm A}_k^3 (x,y,t) / \! / {\bf \mu}_5 (3,4,1),
\]
which contains an ${\mathbbm A}^2$-cylinder isomorphic to ${\mathbbm A}_k^{2} \times ( {\mathbbm A}_k^1 \backslash \{ o \} )$ by Lemma \ref{lem:cylinder}.

\section{Automorphism Groups}
In this section, we classify the automorphism groups of all quasi-smooth weighted hypersurface Fano threefolds in the families $(\spadesuit)$.

\subsection{}\label{3-1}
At first, we will explain briefly our strategy to seek for the full automorphism groups of quasi-smooth weighted hypersurface Fano threefolds in the families $(\spadesuit)$. Let ${\mathbbm P}\coloneqq{\mathbbm P}(a_0,a_1,a_2,a_3,a_4)$ be the weighted projective space with coordinates $x_0,\ldots, x_4$, where the weight of $x_i$ is $a_i$ for each $i$ {\color{black}{such that $a_0 \leqq \cdots \leqq a_4$}}, and let $X\subset {\mathbbm P}$ be a quasi-smooth hypersurface of degree $d$. Suppose that $d\leq 2a_4$. Since $X$ is quasi-smooth, there is an index $i$ such that $d = a_i + a_4$. By a suitable coordinate change we may assume that the hypersurface $X$ is defined by a quasihomogeneous polynomial of the form
\begin{equation*}
    x_i x_4 + f(x_l, x_m, x_n) = 0,
\end{equation*}
where $\{i,l,m,n\} = \{0,1,2,3\}$ and $f(x_l, x_m, x_n)$ is a quasihomogeneous polynomial of degree $d$ that defines a quasi-smooth curve in ${\mathbbm P}(a_l, a_m, a_n)$. 

We consider the projection map $\pi\colon X\dashrightarrow {\mathbbm P}(a_0,a_1,a_2,a_3)$ from the point $\mathsf{p}_4 =(0:0:0:0:1)$. Let $\eta\colon Y\to X$ be the weighted blow-up at the point $\mathsf{p}_4$ with weights: 
\[
{\color{black}{{\rm wt} (x_l, x_m, x_n) = \frac{1}{a_4}(a_l, a_m, a_n)}}
\]
and let $\widetilde{E}$ is the $\eta$-exceptional divisor. We obtain the birational morphism $\tau\colon Y\to {\mathbbm P}(a_0,a_1,a_2,a_3)$ such that the following diagram commutes:
\[\begin{tikzcd}
	Y \\
	\\
	X && {\mathbbm P}(a_0,a_1,a_2,a_3)
	\arrow["\eta"',from=1-1, to=3-1]
	\arrow["\tau",from=1-1, to=3-3]
	\arrow["\pi",dashed, from=3-1, to=3-3]
\end{tikzcd}\]
We consider the case where $a_i< a_4$. For any automorphism $\phi$ of $X$, we have $\phi(\mathsf{p}_4) = \mathsf{p}_4$ and $\phi^*(H) = H$, where $H$ is the hyperplane section defined by $x_i = 0$. Then $\phi$ extend to the automorphism $\phi_{\eta}$ of $Y$ such that $\phi_{\eta}^*(\widetilde{E}) = \widetilde{E}$ and $\phi_{\eta}^*(\widetilde{H}) = \widetilde{H}$, where $\widetilde{E}$ is $\eta$-exceptional and $\widetilde{H}$ is the strict transform of $H$, respectively. Moreover, $\phi_{\eta}$ induce the automorphism $\phi_{\tau}$ of ${\mathbbm P}(a_0,a_1,a_2,a_3)$ such that $\phi_{\tau}^*(E) = E$ and $\phi_{\tau}^*(h) = h$, where $E = \{ x_i = 0 \}$ is the image of $\widetilde{E}$ and $h = \{ x_i = f(x_l, x_m, x_n) = 0 \}$ is the image of $\widetilde{H}$, respectively. Similarly, for every automorphism of ${\mathbbm P}(a_0,a_1,a_2,a_3)$ that {\color{black}{leaves}} both $E$ and $h$, there exists the corresponding automorphism of $X$. Therefore we have: 
\begin{equation}\label{Aut}
    \mathrm{Aut}(X) \cong \mathrm{Aut}({\mathbbm P}(a_0,a_1,a_2,a_3); E, h),
\end{equation}
where $\mathrm{Aut}({\mathbbm P}(a_0,a_1,a_2,a_3); E, h)$ denotes the automorphism group of ${\mathbbm P}(a_0,a_1,a_2,a_3)$ consisting of automorphisms that leave both $E$ and $h$.

Next, we consider the case where $a_i = a_4$. Without loss of generality, we may assume that $i=3$. Every automorphism $\phi$ of $X$ satisfies one of the following two possibilities: either $\phi(\mathsf{p}_4) = \mathsf{p}_4$ and $\phi^*(H) = H$, or $\phi(\mathsf{p}_3) =  \mathsf{p}_4$ and $\phi^*(H_4) = H$, where $\phi(\mathsf{p}_3) = (0:0:0:1:0)$ and $H_4$ denotes the hyperplane section defined by $x_4 = 0$. In the first case, $\phi$ corresponds the automorphism contained in $\mathrm{Aut}({\mathbbm P}(a_0,a_1,a_2,a_3); E, h)$ by the above process. In the second case, we consider the involution $\nu\colon X\to X$ defined by:
\[
(x_0:x_1:x_2:x_3:x_4) \mapsto (x_0:x_1:x_2:x_4:x_3).
\]
Then we have a splitting exact sequence of groups:
\[
1 \longrightarrow {\rm Aut}(X, \mathsf{p}_4) \longrightarrow {\rm Aut} (X) \longrightarrow \big{\langle} \nu \big{\rangle} \longrightarrow 1, 
\]
which says that:
\begin{equation} \label{Aut:Z_2}
{\rm Aut}(X) \cong  {\rm Aut}(X, \mathsf{p}_4) \rtimes \big{\langle} \nu \big{\rangle} \cong  \mathrm{Aut}({\mathbbm P}(a_0,a_1,a_2,a_3); E, h) \rtimes {\mathbbm Z}_2.
\end{equation}


\subsection{}\label{3-2}
In what follows, we will determine the full-automorphism group ${\rm Aut}(X)$ for all quasi-smooth weighted hypersurafce Fano threefolds belonging to the families $(\spadesuit)$.
\subsubsection{{\rm No. 104}}
The Fano threefold in the family No. 104 is uniquely determined, i.e., it is nothing but the smooth quadric hypersurface. It is known that the automorphism group of $X$ is isomorphic to $\mathrm{PSO}_5$ (see \cite[Theorem 1.1.2]{KPS18}). 
\subsubsection{{\rm No. 105}}
Let $X$ be in the family No. 105. By a suitable coordinate change, we may assume that $X$ is defined by:
\begin{equation*}
    X_3: tw + f(x,y,z) = 0 \subset \mathbbm{P}(1_x,1_y,1_z,1_t,2_w),
\end{equation*}
where $f(x,y,z)$ is a homogeneous polynomial of degree $3$ such that the curve defined by $f(x,y,z) = 0$ in $\mathbbm{P}_k^2$ is a smooth cubic plane curve. By applying (\ref{Aut}) we only need to consider the automorphism group $\mathrm{Aut}({\mathbbm P}(1_x,1_y,1_z,1_t); E, h)$, where $E$ is the divisor defined by $t=0$ and $h$ is the curve defined by $t= f(x,y,z) = 0$. Every automorphism $\phi\in\mathrm{Aut}({\mathbbm P}(1_x,1_y,1_z,1_t); E, h)$ is of the form: 
\begin{equation*}
    \phi(x:y:z:t) = (\xi_1x+\xi_2y+\xi_3z+\xi_4 t : \upsilon_1x+\upsilon_2y+\upsilon_3z + \upsilon_4t: \zeta_1x+\zeta_2y+\zeta_3z + \zeta_4t: \tau t),
\end{equation*}
where $\xi_i$, $\upsilon_i$, $\zeta_i$ and $\tau$ are constants. The automorphism $\phi$ can be decomposed into two types of automorphisms, that is, $\phi = \psi_t \circ \psi$, where
\begin{align*}
    \psi(x:y:z:t) &= (\xi_1x+\xi_2y+\xi_3z : \upsilon_1x+\upsilon_2y+\upsilon_3z : \zeta_1x+\zeta_2y + \zeta_3 z : t),\\
    \psi_t(x:y:z:t) &= (x + \xi_4t :y+\upsilon_4t : z+\zeta_4t : \tau t).
\end{align*}
The restriction:
\[
\phi|_E = \psi|_E: (x:y:z) \mapsto \big{(}\xi_1x+\xi_2y+\xi_3z : \upsilon_1x+\upsilon_2y+\upsilon_3z : \zeta_1x+\zeta_2y + \zeta_3 z \big{)}
\]
should be contained in ${\rm Aut}( {\mathbbm P}_k^2, h)$ and vice versa. Thus the sub-group $G$ of $\mathrm{Aut}({\mathbbm P}(1_x,1_y,1_z,1_t); E, h)$ generated by $\psi$ is isomorphic to the group $G= \mathrm{Aut}(\mathbbm{P}_k^2,h)$ composed of linear automorphisms of the smooth plane cubic curve $h$. See \cite{BM} for a classification of such automorphism groups. Therefore, we have
\begin{equation*}
    \mathrm{Aut}(X)\cong (\mathbbm{G}_a^3\rtimes \mathbbm{G}_m)\rtimes G.
\end{equation*}

\subsubsection{{\rm No. 106}}
Let $X$ be in the family No. 106. By a suitable coordinate change, we may assume that $X$ is defined by:
\begin{equation*}
    X_4: tw + f(x,y,z) = 0 \subset \mathbbm{P}(1_x,1_y,1_z,2_t,2_w),
\end{equation*}
where $f(x,y,z)$ is a homogeneous polynomial of degree $4$ such that the curve defined by $f(x,y,z) = 0$ in $\mathbbm{P}_k^2$ is a smooth quartic plane curve. Since $2_t = 2_w$, we will classify $\mathrm{Aut}({\mathbbm P}(1_x,1_y,1_z,2_t); E, h)$ in order to determine the automorphism group $\mathrm{Aut}(X)$ by applying (\ref{Aut:Z_2}), where $E = \{ t=0 \}$ and $h = \{ t = f(x,y,z)=0 \}$. Every automorphism $\phi\in\mathrm{Aut}({\mathbbm P}(1_x,1_y,1_z,2_t); E, h)$ is of the form: 
\begin{equation*}
    \phi(x:y:z:t) = (\xi_1x+\xi_2y+\xi_3z : \upsilon_1x+\upsilon_2y+\upsilon_3z : \zeta_1x+\zeta_2y + \zeta_3 z : \tau t),
\end{equation*}
where $x_i$, $\upsilon_i$, $\zeta_i$ and $\tau$ are constant. The automorphism $\phi$ can be decomposed into two types of automorphisms, that is, $\phi = \psi \circ \psi_t$, where
\begin{align*}
 \psi_t(x:y:z:t) &= (x : y : z : \tau t), \\ 
    \psi(x:y:z:t) &= (\xi_1x+\xi_2y+\xi_3z : \upsilon_1x+\upsilon_2y+\upsilon_3z : \zeta_1x+\zeta_2y + \zeta_3 z : t). 
\end{align*}
The restriction:
\[
\phi|_E=\psi|_E: (x:y:z) \mapsto \big{(} \xi_1x+\xi_2y+\xi_3z : \upsilon_1x+\upsilon_2y+\upsilon_3z : \zeta_1x+\zeta_2y + \zeta_3 z \big{)}
\]
should be contained in ${\rm Aut}({\mathbbm P}(1_x,1_y,1_z),h)$ and vice versa. Therefore the sub-group $G$ of the group $\mathrm{Aut}({\mathbbm P}(1_x,1_y,1_z,2_t); E, h)$ generated by $\psi$ is isomorphic to $\mathrm{Aut}(\mathbbm{P}_k^2,h)$ consisting of linear automorphisms of the smooth plane quartic curve $h$. See \cite[Table 6.1]{Dol} for a classification of such automorphism groups. Therefore we have
\begin{equation*}
    \mathrm{Aut}(X) \cong (\mathbbm{G}_m \times G)\rtimes \mathbbm{Z}_2 \cong {\mathbbm G}_m \rtimes (G \times {\mathbbm Z}_2 ),
\end{equation*}
where ${\mathbbm Z}_2$ acts on ${\mathbbm G}_m$ by means of the inverse action. 
\subsubsection{{\rm No. 111}}\label{Aut:No. 111}
Let $X$ be in the family No. 111. By a suitable coordinate change, we may assume that $X$ is defined by:
\[
X_4: zw + t^2 + f_4(x,y) =0 \subseteq {\mathbbm P}(1_x,1_y,1_z,2_t,3_w), 
\]
where $f_4(x,y)$ is a homogeneous polynomial of degree $4$ in $x$ and $y$, such that $f_4(x,y)=0$ defines four distinct points in ${\mathbbm P}_k^1$. By applying (\ref{Aut}) we only need to consider the automorphism group $\mathrm{Aut}({\mathbbm P}(1_x,1_y,1_z,2_t); E, h)$, where $E$ is the divisor defined by $z=0$ and $h$ is the curve defined by $z= t^2 + f_4(x,y) =0$. Every automorphism $\phi\in\mathrm{Aut}({\mathbbm P}(1_x,1_y,1_z,2_t); E, h)$ is of the form: 
\begin{equation*}
    \phi(x:y:z:t) = (\xi_1x+\xi_2y+\xi_3z : \upsilon_1x+\upsilon_2y+\upsilon_3z : \zeta z : \tau_1 t + \tau_2 xz + \tau_3 zy + \tau_4 z^2),
\end{equation*}
where $\xi_i$, $\upsilon_i$, $\zeta$ and $\tau_i$ are constants and $\tau_1$ is either $-1$ or $1$. The automorphism $\phi$ can be decomposed into three types of automorphisms, that is, $\phi = \varphi \circ \psi_{x,y} \circ \psi_t$, where
\begin{align*}
    \psi_t(x:y:z:t) &= (x : y : z : \tau_1 t), \\ 
    \psi_{x,y}(x:y:z:t) &= (\xi_1x+\xi_2y : \upsilon_1x+\upsilon_2y : z : t),\\
    .\varphi(x:y:z:t) &= \Big{(} x + \xi_3z : y + \upsilon_3z : \zeta z : t + \big{(} \frac{\upsilon_2 \tau_2 -\upsilon_1 \tau_3}{\xi_1 \upsilon_2 - \xi_2 \upsilon_1} \big{)} xz + 
    \big{(} \frac{-\xi_2 \tau_2 +\xi_1 \tau_3}{\xi_1 \upsilon_2 - \xi_2 \upsilon_1} \big{)} zy + \tau_4 z^2 \Big{)}. 
\end{align*}
The subgroup of $\mathrm{Aut}({\mathbbm P}(1_x,1_y,1_z,2_t); E, h)$ generated by automorphisms $\varphi$ is isomorphic to $\mathbbm{G}_a^5 \rtimes \mathbbm{G}_m$. On the other hand, the restriction:
\[
\psi_{x,y}|_E: (x:y:t) \mapsto \big{(} \xi_1 x+ \xi_2 y: \upsilon_1 x+ \upsilon_2 y: t \big{)} 
\]
onto $E=\{ z=0 \} \cong {\mathbbm P}(1_x,1_y,2_t)$ should satisfy:
\[
f_4 \big{(} \xi_1 x+ \xi_2 y, \upsilon_1 x+ \upsilon_2 y ) =f_4 (x,y), 
\]
in other words, $\psi_{x,y}|_{\{ z=t=0\}}$ has to leave $4$ points on ${\rm Proj} (k[y,z])$ defined by $f_4(y,z)=0$ invariant. Thus the sub-group of $\mathrm{Aut}({\mathbbm P}(1_x,1_y,1_z,2_t); E, h)$ corresponding to $\psi_{x,y}$ is isomorphic to {\color{black}{$\mathbb{Z}_2 \times \mathbb{Z}_2$}}. The sub-group corresponding to $\psi_t$ is obviously isomorphic to ${\mathbbm Z}_2$. As a result, we obtain:
\begin{equation*}
    \mathrm{Aut}(X) \cong (\mathbbm{G}_a^5 \rtimes \mathbbm{G}_m) \rtimes {\color{black}{(\mathbbm{Z}_2  \times \mathbbm{Z}_2 \times \mathbbm{Z}_2)}}.
\end{equation*}


\subsubsection{{\rm No. 112}}\label{Aut:No. 112}
Let $X$ be in the family No. 112. We can determine the connected component ${\rm Aut}^0(X)$ without much of difficulty, whereas in order to seek for the full automorphism group ${\rm Aut}(X)$ we need to bear somehow complicated case-by-case observation depending on the defining equation of $X$ in the weighted projective space. By a suitable coordinate change, we may assume that $X$ is defined by:
\[
X_6: tw + z^3 + f_4(x,y)z + f_6(x,y) =0 \subseteq {\mathbbm P}(1_x,1_y,2_z,3_t,3_w), 
\]
where $f_i(x,y)$ are homogeneous polynomials of degree $i$. In order for $X$ to be quasi-smooth, $f_6(x,y)$ must be nonzero. Since $3_t = 3_w$, it follows by (\ref{Aut:Z_2}):
\[
{\rm Aut}(X) \cong {\rm Aut} \Big{(} {\mathbbm P}(1_x,1_y,2_z,3_t); E, h \Big{)} \rtimes {\mathbbm Z}_2,
\]
where $E = \{ t=0 \}$ and $h = \{ t = z^3 + f_4(x,y)z + f_6(x,y)=0 \}$. 
For every automorphism $\phi\in\mathrm{Aut}({\mathbbm P}(1_x,1_y,2_z,3_t); E, h)$, we have $\phi^*(E) = E$. It implies that
\begin{equation*}
    \mathrm{Aut} \big{(} {\mathbbm P}(1_x,1_y,2_z,3_t); E, h \big{)} \cong 
    {\mathbbm G}_m \rtimes \mathrm{Aut} \big{(} {\mathbbm P}(1_x,1_y,2_z); \hat{h} \big{)},
\end{equation*}
so that:
\[
{\rm Aut}(X) \cong {\mathbbm G}_m \rtimes \Big{(} \mathrm{Aut} \big{(} {\mathbbm P}(1_x,1_y,2_z); \hat{h} \big{)} \times {\mathbbm Z}_2 \Big{)}, 
\]
where $\hat{h}$ denotes the curve defined by $z^3 + f_4(x,y)z + f_6(x,y) = 0$ in ${\mathbbm P}(1_x,1_y,2_y)$, and the ${\mathbbm Z}_2$ acts on ${\mathbbm G}_m$ by means of the inverse action. Set $G=\mathrm{Aut}({\mathbbm P}(1_x,1_y,2_z); \hat{h})$. We consider an automorphism $\psi\in G$. Then $\psi$ is should be of the form: 
\begin{equation*}
    (x:y:z)\mapsto (a_1x + b_1y:a_2x+b_2 y : cz),
\end{equation*}
where $a_i$, $b_i$ for $i\in \{1,2\}$ and $c$ are constants. 

We begin with the case where the quartic form $f_4(x,y)$ is identically zero. Since $X$ is quasi-smooth, the equation $f_6(x,y)=0$ defines distinct six points in $\mathbbm{P}_k^1$. By a suitable coordinate change, we may assume that the curve $\hat{h}$ is defined by
\begin{equation*}
    z^3 + f_6(x,y)= z^3+ x (x-y) (x-\lambda y) (x-\mu y) (x -\gamma y) y =0 
\end{equation*}
on $\mathbbm{P}(1_x,1_y,2_z)$, where $\lambda$, $\mu$ and $\gamma$ are constant. 
Thus $\psi$ should satisfy:
\[
c^3=1 \quad {\rm and} \quad f_6(a_1x+b_1y, a_2x+b_2y)=f_6(x,y). 
\]
Hence it follows that $G  \cong {\mathbbm Z}_3 \times \tilde{G}$, where $\tilde{G}$ is a sub-group of ${\rm Aut}({\mathbbm P}_k^1)={\rm PGL}_2$ consisting of automorphisms leaving the six points $\{ 0,1,\lambda, \mu, \gamma, \infty \}$ of ${\mathbbm P}_k^1$ stable. 
Note that the group $\tilde{G}$ are classified as follows depending on the sextic form $f_6(x,y)$ (up to multiplication by non-zero scalars) (cf. \cite[Section 2, (A)]{Bolza}, \cite[Section 3]{SV20} and \cite{Ge74}):
\begin{equation}\label{Aut:degree 6 homogeneous polynomial}
    f_6(x,y) = 
    \begin{dcases}
        \text{generic  case}, & \tilde{G} \cong \text{trivial},\\  
        (x^2 + y^2)(x^2 + \lambda y^2)(x^2 + \mu y^2), & \tilde{G} \cong \mathbbm{Z}_2,\\
        x(x^5 + y^5), & \tilde{G} \cong \mathbbm{Z}_5,\\
        xy(x^2 + y^2)(x^2 + \lambda y^2), & \tilde{G} \cong \mathbbm{D}_4,\\
        (x^3+y^3)(x^3+\lambda y^3), & \tilde{G} \cong \mathbbm{D}_6,\\
        x^6 + y^6, & \tilde{G} \cong \mathbbm{D}_{12},\\
        xy(x^4 + y^4), & \tilde{G} \cong \mathbbm{S}_4,
    \end{dcases}
\end{equation}
where $\lambda$ and $\mu$ are distinct constants not in $\{0,1\}$. Therefore
\begin{equation*}
    \mathrm{Aut}(X) \cong 
    {\mathbbm G}_m \rtimes ( {\mathbbm Z}_2 \times {\mathbbm Z}_3 \times \tilde{G}),
\end{equation*}
where the part ${\mathbbm Z}_2$, that corresponds to the involution $(x:y:z:t:w) \mapsto (x:y:z:w:t)$, acts on ${\mathbbm G}_m$ by means of the inverse action.

Next, we consider the case where $f_4(x, y)$ is nonzero. We may assume that the coefficient $c$ in the automorphism $\psi$ is equal to $1$. Under the action of $\psi$, we have
\begin{equation*}
    \psi^*(z^3 + f_4(x,y)z + f_6(x,y)) = z^3 + f_4(x,y)z + f_6(x,y).
\end{equation*}
Thus we classify the automorphisms $\psi$ such that $\psi^*(f_4(x,y)) = f_4(x,y)$ and $ \psi^*(f_6(x,y)) = f_6(x,y)$.
Furthermore, we may distinguish cases according to whether the discriminant of $f_4(x,y)$ vanishes or not.

We consider the case where $f_4(x,y)$ has nonvanishing discriminant. The automorphism group $F$ of $\mathbbm{P}_k^1$ that fixes the zero locus defined by $f_4(x,y) = 0$ is isomorphic to one of the following: $\mathbbm{Z}_2\times \mathbbm{Z}_2$, $\mathbbm{D}_8$ or $\mathbbm{A}_4$ (see \cite{Yao19}). The quartic form $f_4(x,y)$ is equivalent to $x^4 + \lambda x^2y^2 + y^4$, for some constant $\lambda$. In this case the group $F$ has a normal subgroup isomorphic to $\mathbbm{Z}_2\times \mathbbm{Z}_2$ and is generated by two matrices 
\begin{equation*}
    M_1=
    \begin{bmatrix}
        0&1\\1&0
    \end{bmatrix}
    ,\qquad
    M_2=
    \begin{bmatrix}
        1&0\\0&-1
    \end{bmatrix}
    \in\mathrm{PGL}_2(k).
\end{equation*}
If the nonzero sextic form 
\begin{equation*}
    f_6(x,y) = \zeta_0 x^6 + \zeta_1 x^5y +\zeta_2 x^4y^2 + \zeta_3 x^3y^3 + \zeta_4 x^2y^4 + \zeta_5 xy^5 + \zeta_6 y^6
\end{equation*}
is fixed under the action $M_1$ then the coefficients must satisfy the relations $\zeta_0 = \zeta_6$, $\zeta_1 = \zeta_5$ and $\zeta_2 = \zeta_4$. If the form is fixed under the action $M_2$ then $\zeta_1 = \zeta_3 = \zeta_5 = 0$. 

The condition $12+\lambda^2 \neq 0$ implies that $F\cong \mathbbm{Z}_2\times \mathbbm{Z}_2$ (see \cite{Yao19}). In this case, $G$ is isomorphic to one of the following: the trivial group, $\mathbbm{Z}_2$ or $\mathbbm{Z}_2\times \mathbbm{Z}_2$.

In the case where the group $F$ is isomorphic to $\mathbbm{A}_4$, that is, when $12+\lambda^2 = 0$, the quartic form $f_4(x,y)$ fixed by $F$ is equivalent to $x(x^3+y^3)$. We denote by $\xi_i$ a primitive $i$th root of unity. Let 
\begin{equation*}
    N=
    \begin{bmatrix}
        1 & 0\\ 0&\xi_3
    \end{bmatrix}
    \in F
\end{equation*}
be the element of order three. If the sextic form $f_6(x,y)$ is not invariant under the action of $N$ then $G$ is isomorphic to one of the following: the trivial group, $\mathbbm{Z}_2$ or $\mathbbm{Z}_2\times \mathbbm{Z}_2$. Therefore we may restrict to the case where the nonzero sextic form $f_6(x,y)$ is invariant under the action of $N$. In this case, $G$ contains a subgroup isomorphic to $\mathbbm{Z}_3$, and $f_6(x,y)$ is of the form
\begin{equation*}
    \zeta_0x^6 + \zeta_3x^3y^3 + \zeta_6y^6,
\end{equation*} 
where $\zeta_0$, $\zeta_3$ and $\zeta_6$ are constants. Since the curve $\hat{h}$ is quasi-smooth, the constant $\zeta_6$ is nonzero. For the constant $\zeta_0$, note that there is no involution in $F$ that fixes $y$, the condition $\zeta_0=0$ implies that $G\cong \mathbbm{Z}_3$. Hence, we focus on the case $\zeta_0\neq 0$. It implies that the sextic form $f_6(x,y)$ admits no linear factor invariant under the action of $N$. Then $f_6(x,y)$ is of the form
\begin{equation*}
    \zeta_0(x+ay)(x+a\xi_3y)(x+a\xi_3^2y)(x+by)(x+b\xi_3y)(x+b\xi_3^2y),
\end{equation*}
where $a$ and $b$ are nonzero constants. If $G$ contains an involution $M$ then $G\cong \mathbbm{A}_4$ and the image $x+cy$ of the linear form $x+ay$ under the action of $M$ coincides with one of the linear factors of $f_6(x,y)$. In the case where $c\in \{b, b\xi_3, b\xi_3^2\}$, the sextic form $f_6(x,y)$ is given by the following: Let $M\in F$ be an involution, and let $N\in F$ be an element of order three. For a nonzero vector $(\alpha, \beta)\in k^2$, the sextic form
\begin{equation*}
    \prod_{k=0}^2 
    \left(
    \begin{bmatrix}
        x&y
    \end{bmatrix}
    N^k
    \begin{bmatrix}
        \alpha\\
        \beta
    \end{bmatrix}
    \right)
    \left(
    \begin{bmatrix}
        x&y
    \end{bmatrix}
    N^kM
    \begin{bmatrix}
        \alpha\\
        \beta
    \end{bmatrix}
    \right),
\end{equation*}
is invariant under group generated by $M$ and $N$. In the case where $c\in \{a,a\xi_3,a\xi_3^2\}$, the sextic form $f_6(x,y)$ is given by the following: Let $(\alpha_1,\beta_1)$ and  $(\alpha_2,\beta_2)$ be nonzero vectors in $k^2$ that are fixed under the action of $M$. Then the sextic form
\begin{equation*}
    \prod_{k=0}^2 
    \left(
    \begin{bmatrix}
        x&y
    \end{bmatrix}
    N^k
    \begin{bmatrix}
        \alpha_1\\
        \beta_1
    \end{bmatrix}
    \right)
    \left(
    \begin{bmatrix}
        x&y
    \end{bmatrix}
    N^k
    \begin{bmatrix}
        \alpha_2\\
        \beta_2
    \end{bmatrix}
    \right),
\end{equation*}
is invariant under group generated by $M$ and $N$.

In the case where the group $F$ is isomorphic to $\mathbbm{D}_8$, the quartic form $f_4(x,y)$ is equivalent to $x^4 + y^4$. Then $F$ is generated by
\begin{equation*}
    M_1=
    \begin{bmatrix}
        0&1\\1&0
    \end{bmatrix}
    , \qquad
    M_2=
    \begin{bmatrix}
        1&0\\0&-i
    \end{bmatrix}
    \in\mathrm{PGL}_2(k).
\end{equation*}
The sextic form $f_6(x,y)$ that is fixed under the action of $M_2$ is of the form:
\begin{equation*}
    \zeta_0x^6 + \zeta_4x^2y^4
\end{equation*}
where $\zeta_0$ and $\zeta_4$ are constants. These forms are not fixed by $M_1$. Thus $G\cong \mathbbm{Z}_4$. Otherwise $G$ is a subgroup of $\mathbbm{Z}_2\times \mathbbm{Z}_2$. This has already been considered.

If the discriminant of $f_4(x,y)$ vanishes, then $f_4(x,y)$ can be taken to be one of the following forms by a suitable coordinate change, : $xy(x+y)^2$, $x^2y^2$, $x^3y$ or $x^4$. 

We consider the case where $f_4(x,y)$ is of the form $xy(x+y)^2$. Then the only nontrivial automorphism $\psi_{xy}$ is given by $(x:y:z)\mapsto (y:x:z)$. Set
\begin{equation*}
    f_6(x,y)=\zeta_0x^6 + \zeta_1x^5y + \zeta_2x^4y^2 + \zeta_3x^3y^3 + \zeta_4x^2y^4 + \zeta_5xy^5 + \zeta_6y^6,
\end{equation*}
where $\zeta_i$ are constants. In order for the sextic form $f_6(x,y)$ to be fixed under the action of $\psi_{xy}$, the symmetry condition $\zeta_i = \zeta_{6-i}$ holds for $i=0,1,2$. Then $G\cong \mathbbm{Z}_2$, otherwise, $G$ is trivial.

We consider the case where $f_4(x,y)$ is of the forms $x^2y^2$. The quasi-smoothness of the curve $\hat{h}$ implies that at least one of the coefficients $\zeta_0$ or $\zeta_1$, and at least one of $\zeta_5$ or $\zeta_6$, in the sextic form $f_6(x,y)$, must be nonzero. The automorphism group $G$ is generated by the automorphisms $\psi_{a,b}$ defined by $(x:y:z)\mapsto (ax:by:z)$, where $a$ and $b$ are nonzero constants satisfying certain conditions specified below, possibly together with the automorphism $\psi_{xy}$ defined by $(x:y:z)\mapsto (y:x:z)$. 

We first determine the subgroup $G_1$ of $G$ generated by the automorphisms $\psi_{a,b}$. When only $\zeta_0$ and $\zeta_6$ are nonzero, conditions $a^2b^2 = a^6 = b^6 = 1$ hold. It implies that $G_1$ is generated by $\psi_{\xi_6, \xi_6^2}$ and $\psi_{1, -1}$. Consequently, $G_1\cong\mathbbm{Z}_6\times \mathbbm{Z}_2$. When only $\zeta_1$ and $\zeta_5$ are nonzero, condtions $a^2b^2 = a^5b = ab^5 = 1$ hold. It implies that $G_1$ is generated by $\psi_{\xi_8, \xi_8^3}$. Consequently, $G_1\cong\mathbbm{Z}_8$. When only $\zeta_0$ and $\zeta_5$ are nonzero, the conditions $a^2b^2 = a^6 = ab^5 = 1$. Then $(a,b)$ is either $(1,1)$ or $(-1,-1)$. It implies that $G_1\cong\mathbbm{Z}_2$. Similarly we can see that if only $\zeta_1$ and $\zeta_6$ are nonzero, $G_1\cong\mathbbm{Z}_2$. If we additionally assume that $\zeta_3\neq 0$ then $G_1$ is isomorphisc to one of the following groups: $\mathbbm{Z}_6$, $\mathbbm{Z}_4$ or $\mathbbm{Z}_2$. When $\zeta_2$ or $\zeta_4$ are nonzero, $a^2=b^2=1$ hold. Under the conditions $\zeta_0$ and $\zeta_6$ are nonzero, we have $G_1\cong \mathbbm{Z}_2\times \mathbbm{Z}_2$. In the remaining cases, $G_1\cong \mathbbm{Z}_2$.

Next, we consider the automorphism $\psi_{xy}$. The sextic form $f_6(x,y)$ is fixed under the action of $\psi_{xy}$, if and only if its coefficients satisfy the symmetry condition $\zeta_i = \zeta_{6-i}$ for $i=0,1,2$. Consequently, if this symmetry condition holds then $G\cong G_1\times \mathbbm{Z}_2$, otherwise $G\cong G_1$.

We consider the case where $f_4(x,y)$ is of the forms $x^3y$. Then we have $b_1 = a_2 = 0$ that are the constants in the automorphism $\psi$. Then $a_1^3b_2=1$ holds. Since the curve $\hat{h}$ is quasi-smooth, the sextic form $f_6(x,y)$ contains monomials $x^6$ or $x^5y$ and also contains monomials $xy^5$ or $y^6$. It implies that one of the following hold:
\begin{equation*}
    1=a_1^6 = a_1b_2^5,\qquad 1=a_1^6 = b_2^6,\qquad 1=a_1^5b_2 = a_1b_2^5,\qquad 1=a_1^5b_2 = b_2^6.
\end{equation*}
In each case, the group $G$ is isomorphic to $\mathbbm{Z}_2$, $\mathbbm{Z}_6$, $\mathbbm{Z}_2$, $\mathbbm{Z}_2$, respectively.

The remaining case is when the quartic form $f_4(x,y)$ is of the form $x^4$. In this case, $f_6(x,y)$ is either of the form 
\begin{equation*}
    y^6 + c_2x^2y^4 + c_3x^3y^3 + c_4x^4y^2 + c_5x^5y + c_6x^6
\end{equation*}
or
\begin{equation*}
    xy^5 +  d_3x^3y^3 + d_4x^4y^2 + d_5x^5y + d_6x^6.
\end{equation*}
Then the coefficients $b_1$ and $a_2$ in the automorphism $\psi$ must vanish. We may assume that $c = 1$ from the beginning. Then it follows that $a_1^4 =1$ and $f_6(a_1x, b_2y) \equiv f_6(x,y)$. We consider the case where:
\[
f_6(x,y) = y^6 + c_2x^2y^4 + c_3x^3y^3 + c_4x^4y^2 + c_5x^5y + c_6x^6. 
\]
Then the pair $(a_1, b_2)$ has to satisfy the the equations:
\[
a_1^4=1, \, b_2^6=1, \, a_1^2b_2^4c_2=c_2, \, a_1^3b_2^3c_3=c_3, \, a_1^4b_2^2c_4=c_4, \, a_1^5b_2 c_5=c_5, \, a_1^6 c_6=c_6. 
\]
In the case of $c_2=c_3=c_4=c_5=c_6=0$, the corresponding group, which is generated by $(\xi_4, 1)$ and $(1, \xi_6)$, is isomorphic to ${\mathbbm Z}_4 \times {\mathbbm Z}_6$, where $\xi_4$ and $\xi_6$ are respectively primitive fourth root of unity and primitive sixth root of unity. This is the case where the corresponding group becomes the largest. Depending on whether the coefficients of $c_2, c_3, c_4, c_5$ and $c_6$ are zero or not, the group $G$ varies. For instance, if one of the follwoing cases occurs:
\[
(c_2,c_3,c_4,c_5,c_6)= (\ast, 0,0,0,0), \, (\ast, 0, \ast, 0,0), \, (\ast, 0, 0, 0, \ast), 
\]
then $G$ is isomorphic to the Klein four group ${\mathbbm Z}_2 \times {\mathbbm Z}_2$, where $\ast$ stands for any non-zero scalar. The straightforward computation allows to conclude that by means of the suitable choice of coefficients $(c_2, c_3, c_4, c_5, c_6)$, we can realize sub-groups of ${\mathbbm Z}_4 \times {\mathbbm Z}_6$, which are isomorphic to:
\[
{\mathbbm Z}_4 \times {\mathbbm Z}_6, \, {\mathbbm Z}_2 \times {\mathbbm Z}_6, \, {\mathbbm Z}_4 \times {\mathbbm Z}_2, \, {\mathbbm Z}_6, \, {\mathbbm Z}_2 \times {\mathbbm Z}_2, \, {\mathbbm Z}_2, 
\]
as $G$. In other words, any sub-group of ${\mathbbm Z}_4 \times {\mathbbm Z}_6$ isomorphic to either ${\mathbbm Z}_4$, ${\mathbbm Z}_3$ and the trivial one can not be realized as $G$. Indeed, even if all the coefficients $c_2,c_3, c_4, c_5$ and $c_6$ are non-zero, the group $G$ is isomorphic to ${\mathbbm Z}_2$. 

We finally deal with the case where:
\[
f_6(x,y) = xy^5 +  d_3x^3y^3 + d_4x^4y^2 + d_5x^5y + d_6x^6.
\]
Then the pair $(a_1, b_2)$ should satisfy the following equations:
\[
a_4=1, \, a_1b_2^5=1, \, a_1^3b_1^3 d_3=d_3, \, a_1^4b_2^2d_4=d_4, \, 
a_1^5b_2d_5=d_5, \, a_1^6 d_6 =d_6. 
\]
For instance, if $d_3=d_4=d_5=d_6=0$, then it follows that the corresponding group $G$ is isomorphic to the cyclic group ${\mathbbm Z}_{20}$. This is the case in which the group $G$ becomes the biggest. By the straightforward computation, we can actually see that all sub-groups of ${\mathbbm Z}_{20}$ other than the one isomorphic to ${\mathbbm Z}_5$ can be realized as $G$. For example, if $d_6 \neq 0$ and $d_3=d_4=d_5=0$ (resp. $d_5 \neq 0$ and $d_3=d_4=d_6=0$), then $G$ is isomorphic to ${\mathbbm Z}_{10}$ (resp. ${\mathbbm Z}_4$). 

\subsubsection{{\rm No. 113}}
Let $X$ be in the family No. 113. Then in consideration of the quasi-smoothness of $X$, we may assume that $X$ is defined by:
\[
X_4: yw + zt + x^4 =0 \subseteq {\mathbbm P}(1_x,1_y,2_z,2_t,3_w). 
\]
By applying (\ref{Aut}) we only need to consider the automorphism group $\mathrm{Aut}({\mathbbm P}(1_x,1_y,2_z,2_t); E, h)$, where $E$ is the divisor defined by $y=0$ and $h$ is the curve defined by $y= zt + x^4 =0$. The straightforward computation says that an automorphism $\phi \in \mathrm{Aut}({\mathbbm P}(1_x,1_y,2_z,2_t))$ belongs to the sub-group $\mathrm{Aut}({\mathbbm P}(1_x,1_y,2_z,2_t); E, h)$ if and only if it is either of the form:
\begin{equation*}
    \phi(x:y:z:t) = (\xi_1 x+ \xi_2 y : \upsilon y : \zeta_1 z + \zeta_2 xy + \zeta_3 y^2 : \zeta_1^{-1}t + \tau_1 xy + \tau_2 y^2), \, {\rm or}
\end{equation*}
\begin{equation*}
    \phi(x:y:z:t) = (\xi_1 x+ \xi_2 y : \upsilon y : \zeta_1 t + \zeta_2 xy + \zeta_3 y^2 : \zeta_1^{-1}z + \tau_1 xy + \tau_2 y^2), 
\end{equation*}
where $\xi_1 \in k^\times$ is such that $\xi_1^4=1$, $\upsilon, \zeta_1 \in k^\times$, and $\xi_2, \zeta_2, \zeta_3, \tau_1, \tau_2 \in k$. We will observe the former case. Then $\phi$ can be realized as a composite $\phi= \varphi_3 \circ \varphi_2 \circ \varphi_1$, where:
\begin{align*}
    \varphi_1(x:y:z:t) &= \big{(} \xi_1 x : y : z : t \big{)},\\
    \varphi_2(x:y:z:t) &= \big{(} x : y : \zeta_1 z : \zeta_1^{-1} t \big{)},\\
    \varphi_3(x:y:z:t) &= \Big{(} x+\xi_2 y : \upsilon y : z+ \xi_1^{-1}\zeta_2 xy + \zeta_3 y^2 : t + \xi_1^{-1} \tau_1 xy + \tau_2 y^2 \Big{)}.
\end{align*}
Therefore it follows that:
\[
\mathrm{Aut}(X) \cong 
\mathrm{Aut}({\mathbbm P}(1_x,1_y,2_z,2_t); E, h) \cong \big{(} {\mathbbm G}_a^5 \rtimes {\mathbbm G}_m^2 \big{)} \rtimes ({\mathbbm Z}_4 \times {\mathbbm Z}_2) 
\]
where the ${\mathbbm Z}_2$ part is generated by the involution:

\begin{equation*}
    (x:y:z:t) \mapsto (x:y:t:z), 
\end{equation*}
and it acts on ${\mathbbm G}_m^2$ by means of:
\begin{equation*}
    (\lambda, \mu) \mapsto (\lambda, \mu^{-1}). 
\end{equation*}

\subsubsection{{\rm No. 114}} 
Let $X$ be in the family No. 114. As $X$ is assumed to be quasi-smooth, we may assume that $X$ is defined by:
\[
X_6: zw+t^2+f_6(x,y)=0  \subseteq {\mathbbm P}(1_x,1_y,2_z,3_t,4_w), 
\]
where $f_6(x,y)$ is a homogeneous polynomial of degree $6$ such that $f_6(x,y)=0$ yields distinct six points on ${\mathbbm P}_k^1$. By applying (\ref{Aut}) we only need to consider the automorphism group $\mathrm{Aut}({\mathbbm P}(1_x,1_y,2_z,3_t); E, h)$, where $E$ is the divisor defined by $z=0$ and $h$ is the curve defined by $z= t^2 + f_6(x,y) =0$. Every automorphism $\phi\in\mathrm{Aut}({\mathbbm P}(1_x,1_y,2_z,3_t); E, h)$ should be of the form
\begin{equation*}
    \phi(x:y:z:t) = (\xi_1x+\xi_2y : \upsilon_1x+\upsilon_2y : \zeta z : \tau_1 t + \tau_2 xz + \tau_3 yz),
\end{equation*}
where $\xi_i$, $\upsilon_i$, $\zeta$ and $\tau_i$ are constants, $\zeta \neq 0$ and $\tau_1$ is either $-1$ or $1$. The automorphism $\phi$ can be decomposed into three types of automorphisms, that is, $\phi = \varphi \circ \psi_{x,y} \circ \psi_t$, where
\begin{align*}
\psi_t(x:y:z:t) &= (x : y : z : \tau_1 t), \\ 
 \psi_{x,y}(x:y:z:t) &= (\xi_1x+\xi_2y : \upsilon_1x+\upsilon_2y : z : t), \\ 
    \varphi(x:y:z:t) &= \Big{(} x : y : \zeta z : t + \big{(} \frac{\upsilon_2 \tau_2 - \upsilon_1 \tau_3}{\xi_1 \upsilon_2 - \xi_2 \upsilon_1} \big{)} xz + \big{(} \frac{ -\xi_2 \tau_2 + \xi_1 \tau_3}{\xi_1 \upsilon_2 - \xi_2 \upsilon_1} \big{)} yz \Big{)}. 
\end{align*}
The subgroup of $\mathrm{Aut}({\mathbbm P}(1_x,1_y,2_z,3_t); E, h)$ generated by automorphisms corresponding to $\varphi$ is isomorphic to $\mathbbm{G}_a^2 \rtimes \mathbbm{G}_m$. Since the subgroup $G$ of $\mathrm{Aut}({\mathbbm P}(1_x,1_y,2_z,3_t); E, h)$ corresponding to $\psi_{x,y}$ is isomorphic to the group ($\ref{Aut:degree 6 homogeneous polynomial}$), we have
\begin{equation*}
    \mathrm{Aut}(X)\cong (\mathbbm{G}_a^2 \rtimes \mathbbm{G}_m)\rtimes (G\times \mathbbm{Z}_2). 
\end{equation*}
\subsubsection{{\rm No. 115}} 
Let $X$ be in the family No. 115. By a suitable coordinate change we may assume that  $X$ is defined by:
\[
X_6: tw + yz^2 + y^2z + ax^2yz + x^4(by + cz) + dx^6 = 0   \subseteq {\mathbbm P}(1_x,2_y,2_z,3_t,3_w), 
\]
where $a$, $b$, $c$ and $d$ are constants. By applying (\ref{Aut:Z_2}), we see that
\[
{\rm Aut}(X) \cong \mathrm{Aut} \big{(} {\mathbbm P}(1_x,2_y,2_z,3_t); E, h \big{)} \rtimes {\mathbbm Z}_2, 
\]
where $E$ is the divisor defined by $t=0$ and $h$ is the curve defined by $t= yz^2 + y^2z + ax^2yz + x^4(by + cz) + dx^6 = 0$, and the ${\mathbbm Z}_2$-part is generated by the involution:
\[
[x:y:z:t:w] \mapsto [x:y:z:w:t]. 
\]
We will determine the group structure of ${\rm Aut} ({\mathbbm P}(1_x,2_y,2_z,3_t), E,h)$. A given automorphism $\phi\in\mathrm{Aut}({\mathbbm P}(1_x,1_y,2_z,2_t); E, h)$ should be of the form: 
\begin{equation*}
    \phi(x:y:z:t) = (x : \upsilon_1 y + \upsilon_2 z + \upsilon_3 x^2 : \zeta_1 y + \zeta_2 z + \zeta_3 x^2: \tau t ),
\end{equation*}
where $\upsilon_i$, $\zeta_i$ and $\tau \neq 0$ are constants. The automorphism $\phi$ can be decomposed into three types of automorphisms, that is, $\phi = \varphi_3 \circ \varphi_2 \circ \varphi_1$, where
\begin{align*}
    \varphi_1(x:y:z:t) &= (x : y  : z : \tau t ),\\
    \varphi_2(x:y:z:t) &= \big{(} x : \upsilon_1 y + \upsilon_2 z : \zeta_1 y + \zeta_2 z : t \big{)}, \\
    \varphi_3(x:y:z:t) &= \big{(} x: y + \upsilon_3 x^2: z+ \zeta_3 x^2: t \big{)}. 
\end{align*}

\vspace{2mm} 
\noindent {\it Claim.} \, $\upsilon_3 = \zeta_3 =0$.

\vspace{2mm} 
\noindent {\it Proof of Claim.} The restriction $\phi|_E$ of $\varphi$ onto $E=\{ t=0 \} \cong {\mathbbm P}(1_x,2_y,2_z)$ should leave the following curve $h$ stable:
\[
yz(y+z) +a x^2yz + x^4(by+cz) + dx^6=0,  
\]
in particular, $(\upsilon_3, \zeta_3)$ has to satisfy the quadric equation:
\[
\upsilon_3 \zeta_3 (\upsilon_3 + \zeta_3) +a \upsilon_3 \zeta_3 + (b\upsilon_3 + c\zeta_3) =0 
\]
in the unipotent group ${\mathbbm G}_a^2= ( {\rm Spec}(k [y,z]), + )$. This implies in turn that $(\upsilon_3, \zeta_3)=(0,0)$ is the unique solution. 
$\Box$

\vspace{2mm} 
Thus the sub-group generated by automorphism $\varphi_3$ is trivial. On the other hand, the sub-group generated by $\varphi_1$ is isomorphic to ${\mathbbm G}_m$. Now we will look into the sub-group $G$ generated by automorphisms of type $\varphi_2$. The restriction:
\[
\varphi_2|_E : (x:y:z) \mapsto \big{(} x: \upsilon_1 y + \upsilon_2 z: \zeta_1 y + \zeta_2 z \big{)}
\]
leaving the curve $h$ invariant, we have three equations:
\[
\varphi_2^\ast \big{(} yz(y+z) \big{)}= yz(y+z), \, 
\varphi_2^\ast \big{(} ayz \big{)} = ayz, \, 
\varphi_2^\ast \big{(} by+cz \big{)} = by+cz. 
\]
If $a=b=c=0$, then we have only to take care of the first equation $\varphi_2^\ast \big{(} yz(y+z) \big{)}= yz(y+z)$, which implies that $G$ is isomorphic to the symmetric group ${\mathbbm S}_3$. On the other hand, the direct computation says that if:
\[
(a,b,c)= (a,b, \pm b), \, (0, b ,2b), \, (0,b, \frac{ \, 1 \, }{2}b) \quad (b\neq 0), 
\]
then $G$ is isomorphic to ${\mathbbm Z}_2$. Otherwise, $G$ is trivial. To summarize, we have:
\begin{equation*}
    \mathrm{Aut}(X)\cong \mathbbm{G}_m \rtimes ( G \times {\mathbbm Z}_2) \cong 
    \begin{dcases}
        {\mathbbm G}_m \rtimes ( {\mathbbm S}_3  \times \mathbbm{Z}_2 ) & \text{if $a=b=c=0$},\\
        {\mathbbm G}_m \rtimes ({\mathbbm Z}_2 \times \mathbbm{Z}_2) & \text{if $(a,b,c)=(a,b, \pm b), (0, b, 2b), (0, b, \frac{1}{2}b)$ $(b\neq 0)$},\\
        {\mathbbm G}_m \rtimes \mathbbm{Z}_2 & \text{otherwise},\\
    \end{dcases}
\end{equation*}
where the ${\mathbbm Z}_2$-part acts on ${\mathbbm G}_m$ by means of the inverse action. 


\subsubsection{{\rm No. 118}}
Let $X$ be in the family No. 118. Since $X$ is assume to be quasi-smooth, we may assume that $X$ is defined as follows:
\[
X_6:  yw+t^2+ z^3 + ax^4z + bx^6=0  \subseteq {\mathbbm P}(1_x,1_y,2_z,3_t,5_w), 
\] 
where $a$ and $b$ are constants such that at least one of $a$ and $b$ is non-zero. By applying (\ref{Aut}), we only need to determine the automorphism group $\mathrm{Aut}({\mathbbm P}(1_x,2_y,2_z,3_t); E, h)$, where $E$ is the divisor defined by $y=0$ and $h$ is the curve defined by $y=t^2 +  z^3 + ax^4z + bx^6 = 0$. Every automorphism $\phi\in\mathrm{Aut}({\mathbbm P}(1_x,1_y,2_z,3_t); E, h)$ should be of the form
\begin{equation*}
    \phi(x:y:z:t) = (\xi_1 x + \xi_2 y:  \upsilon y : \zeta_1 z +  \zeta_2 xy+ \zeta_3 y^2 : \tau_1 t + \tau_2 yz + \tau_3 x^2y + \tau_4 xy^2 + \tau_5 y^3),
\end{equation*}
where $\xi_i$, $\upsilon$, $\zeta_i$ and $\tau_i$ are constants such that $\upsilon \neq 0$. The automorphism $\phi$ can be decomposed into three types of automorphisms, that is, $\phi = \varphi \circ \psi_{x,z} \circ \psi_t$, where
\begin{align*}
\psi_t(x:y:z:t) &= (x : y : z : \tau_1 t), \\
    \psi_{x,z}(x:y:z:t) &= (\xi_1 x : y: \zeta_1 z : t ),\\
    \varphi(x:y:z:t) &= (x + \xi_2 y:  \upsilon y : z +  \xi_1^{-1} \zeta_2 xy+ \zeta_3 y^2 : t + \xi_1^{-1} \tau_2 yz + \xi_1^{-1} \tau_3 x^2y + \xi_1^{-1} \tau_4 xy^2 + \tau_5 y^3). 
\end{align*}
We will determine the sub-group $G$ generated by $\psi_{x,z}$'s and $\psi_t$'s. The restriction of $\phi$ onto $E=\{ y=0 \} \cong {\mathbbm P}(1_x, 2_z, 3_t)$ yields an automorphism:
\[
{\phi}|_E: \big{(} x:z:t \big{)} \mapsto \big{(} \xi_1 x: \zeta_1 z: \tau_1 t \big{)}
\]
We consider the condition by which $\phi|_E$ leaves the curve defined by $t^2+z^3+ax^4z+bx^6=0$ invariant depending on the pair $(a,b)$. In case of $a\neq 0, b\neq 0$, we have:
\[
\tau_1^2=1, \zeta_1^3=1, \xi_1^4 \zeta_1 =1, \xi_1^6=1, 
\]
which gives rise to $\tau_1 \in \{ \pm 1 \}$ and the solutions for the pair $(\xi_1, \zeta_1)$ generates a sub-group isomorphic to ${\mathbbm Z}_2 \times {\mathbbm Z}_3$. Thus $G \cong {\mathbbm Z}_2^2 \times {\mathbbm Z}_3$. Next in case of $a\neq 0, b=0$, we have:
\[
\tau_1^2=1, \zeta_1^3=1, \xi_1^4 \zeta_1 =1, 
\]
which allows us to conclude that $\tau_1 \in \{ \pm 1 \}$ and the pair $(\xi_1, \zeta_1)$ generates a sub-group isomorphic to ${\mathbbm Z}_3 \times {\mathbbm Z}_4$, so that $G \cong {\mathbbm Z}_2 \times {\mathbbm Z}_3 \times {\mathbbm Z}_4$. Finally, in case of $a=0, b\neq 0$, we have:
\[
\tau_1^2=1, \zeta_1^3=1, \xi_1^6=1, 
\]
which yields a sub-group isomorphic to $G \cong {\mathbbm Z}_2 \times {\mathbbm Z}_3 \times {\mathbbm Z}_6 \cong {\mathbbm Z}_2^2 \times {\mathbbm Z}_3^2$. As a consequence, it follows that:
\begin{equation*}
    \mathrm{Aut}(X)\cong (\mathbbm{G}_a^7 \rtimes \mathbbm{G}_m)\rtimes
    \begin{dcases}
        \mathbbm{Z}_2^2 \times \mathbbm{Z}_3 & \text{if $a\neq 0$ and $b\neq 0$},\\
        \mathbbm{Z}_2 \times {\mathbbm Z}_3 \times \mathbbm{Z}_4  & \text{if $a \neq 0$ and $b=0$},\\
        \mathbbm{Z}_{2}^2 \times \mathbbm{Z}_3^2 & \text{if $a=0$ and $b\neq 0$},\\
    \end{dcases}
\end{equation*}

\subsubsection{{\rm No. 119}}
Let $X$ be in the family No. 119. By a suitable coordinate change, we may assume that $X$ is defined as follows:
\begin{equation*}
    X_6: xw + t^2 + yz(y+z) = 0\subset \mathbbm{P}(1_x,2_y,2_z,3_t,5_w).
\end{equation*}
By applying (\ref{Aut}) we only need to consider the automorphism group $\mathrm{Aut}({\mathbbm P}(1_x,2_y,2_z,3_t); E, h)$, where $E$ is the divisor defined by $x=0$ and $h$ is the curve defined by $x=t^2 + yz(y+z) = 0$. Every automorphism $\phi\in\mathrm{Aut}({\mathbbm P}(1_x,2_y,2_z,3_t); E, h)$ is of the form
\begin{equation*}
    \phi(x:y:z:t) = (\xi x : \upsilon_1 y + \upsilon_2z + \upsilon_3x^2 : \zeta_1z + \zeta_2y + \zeta_3x^2: \tau_1t + \tau_2xz + \tau_3xy + \tau_4x^3),
\end{equation*}
where $\xi$, $\upsilon_i$, $\zeta_i$ and $\tau_i$ are constants satisfying $\tau_1^2 = 1$. The automorphism $\phi$ can be decomposed into three types of automorphisms, that is, $\phi = \varphi \circ \psi_{y,z} \circ \psi_t$, where
\begin{align*}
\psi_t(x:y:z:t) &= (x : y : z : \tau_1t), \\
    \psi_{y,z}(x:y:z:t) &= (x : \upsilon_1 y + \upsilon_2z: \zeta_1z + \zeta_2y : t),\\
     \varphi(x:y:z:t) &= \Big{(} \xi x : y + \upsilon_3 x^2 : z + \zeta_3 x^2 : t + \big{(} \frac{\upsilon_1 \tau_2 - \upsilon_2 \tau_3}{\zeta_1 \upsilon_1 -\zeta_2 \upsilon_2} \big{)} xz + \big{(} \frac{ -\zeta_2 \tau_2 +\zeta_1 \tau_3 }{\zeta_1 \upsilon_1 -\zeta_2 \upsilon_2} \big{)}   xy + \tau_4x^3 \Big{)}. 
\end{align*}
It is easy to see that the subgroup of $\mathrm{Aut}({\mathbbm P}(1_x,2_y,2_z,3_t); E, h)$ corresponding to $\psi_{y,z}$ is isomorphic to $\mathbbm{S}_3$. Therefore, we have
\begin{equation*}
    \mathrm{Aut}(X)\cong (\mathbbm{G}_a^5\rtimes \mathbbm{G}_m) \rtimes (\mathbbm{S}_3\times \mathbbm{Z}_2).
\end{equation*}

\subsubsection{{\rm No. 120}}
Let $X$ be in the family No. 120. By a suitable coordinate change, we may assume that $X$ is defined as follows:
\begin{equation*}
    X_6: yw + zt + x^6 = 0\subset \mathbbm{P}(1_x,2_y,3_z,3_t,4_w).
\end{equation*}
By applying (\ref{Aut}) we have only to consider the automorphism group $\mathrm{Aut}({\mathbbm P}(1_x,2_y,3_z,3_t); E, h)$, where $E$ is the divisor defined by $y=0$ and $h$ is the curve defined by $y=zt + x^6 = 0$. By a straightforward computation, we see that every automorphism $\phi\in\mathrm{Aut}({\mathbbm P}(1_x,2_y,3_z,3_t); E, h)$ is either of the form: 
\begin{equation*}
    \phi(x:y:z:t) = (\xi x : \upsilon y : \zeta_1z + \zeta_2xy: \zeta_1^{-1}t + \tau xy), \, {\rm or}
\end{equation*}
\begin{equation*}
    \phi(x:y:z:t) = (\xi x : \upsilon y : \zeta_1 t + \zeta_2xy: \zeta_1^{-1}z + \tau xy),
\end{equation*}
where $\xi$, $\upsilon$, $\zeta_i$ and $\tau$ are constants such that $\xi, \zeta_1 \in k^\times$ satisfying $\xi^6 = 1$. The automorphism $\phi$ can be decomposed into three types of automorphisms, that is, $\phi = \varphi_3 \circ \varphi_2 \circ \varphi_1$, where 
\begin{align*}
    \varphi_1(x:y:z:t) &= (\xi x : y : z : t ),\\
    \varphi_2(x:y:z:t) &= (x : y: \zeta_1z : \zeta_1^{-1}t ),\\
    \varphi_3(x:y:z:t) &= (x : \upsilon y : z+ \xi^{-1} \zeta_2 xy : t + \xi^{-1}\tau xy)
\end{align*}
for the former case, and 
\begin{align*}
    \varphi_1(x:y:z:t) &= (\xi x : y : z : t ),\\
    \varphi_2(x:y:z:t) &= (x : y: \zeta_1t : \zeta_1^{-1}z ),\\
    \varphi_3(x:y:z:t) &= (x : \upsilon y : z+ \xi^{-1} \zeta_2 xy : t + \xi^{-1}\tau xy)
\end{align*}
for the latter case. Thus it follows that:
\begin{equation*}
    \mathrm{Aut}(X)\cong \Big{(} {\mathbbm G}_a^2 \rtimes ( {\mathbbm G}_m^2 \times {\mathbbm Z}_6 ) \Big{)} \rtimes {\mathbbm Z}_2 \cong 
    ({\mathbbm G}_a^2 \rtimes  {\mathbbm G}_m^2) \rtimes ({\mathbbm Z}_2^2 \times {\mathbbm Z}_3), 
\end{equation*}
where the last isomorphism is because of the fact that ${\mathbbm Z}_2$ part, which is generated by the involution:
\[
(x:y:z:t) \mapsto (x:y:t:z), 
\]
acts trivially on the ${\mathbbm Z}_6$ part by construction.

\subsubsection{{\rm No. 121}}
Let $X$ be in the family No. 121. By a suitable coordinate change, we may assume that $X$ is defined as follows:
\begin{equation*}
    X_8: zw + t^2 + y^4 + ax^4y^2 + bx^6y + cx^8 = 0\subset \mathbbm{P}(1_x,2_y,3_z,4_t,5_w),
\end{equation*}
where $a$, $b$ and $c$ are constants. By applying (\ref{Aut}) we only need to consider the automorphism group $\mathrm{Aut}({\mathbbm P}(1_x,2_y,3_z,4_t); E, h)$, where $E$ is the divisor defined by $z=0$ and $h$ is the curve defined by $z=t^2 + y^4 + ax^4y^2 + bx^6y + cx^8 = 0$. Every automorphism $\phi\in\mathrm{Aut}({\mathbbm P}(1_x,2_y,3_z,4_t); E, h)$ is of the form
\begin{equation*}
    \phi(x:y:z:t) = (\xi x : \upsilon y : \zeta z : \tau_1 t + \tau_2 xz),
\end{equation*}
where $\xi$, $\upsilon$, $\zeta$ and $\tau_i$ are constants such that $\tau_1^2=1$. The automorphism $\phi$ can be decomposed into three types of automorphisms, that is, $\phi = \varphi \circ \psi_{x,y} \circ \psi_t$, where
\begin{align*}
\psi_t(x:y:z:t) &= (x : y : z : \tau_1 t), \\
    \psi_{x,y}(x:y:z:t) &= (\xi x : \upsilon y: z : t),\\
    \varphi(x:y:z:t) &= (x : y : \zeta z : t + \xi^{-1} \tau_2 xz). 
\end{align*}
We will determine the group ${\mathbbm W}={\rm Aut}^0(X)/{\rm Aut}(X)$ by case-by-case argument. We first consider the case where $b=0$. By the quasi-smooth condition the constant $c$ must be nonzero. Then this case can be divided into two cases either $a=0$ or $a\neq 0$. If $a=0$ then the subgroup of the automorphism group $\mathrm{Aut}({\mathbbm P}(1_x,2_y,3_z,4_t); E, h)$ corresponding to $\psi_{x,y}$ can be decomposed into the two automorphisms $\psi_x(x:y:z:t) = (\xi x : y : z : t)$ and $\psi_y(x:y:z:t) = (x : \upsilon y: z : t)$. Since the subgroup corresponding to $\psi_t$ is isomorphic to $\mathbbm{Z}_2$, we have $\mathbbm{W}\cong  \mathbbm{Z}_8 \times \mathbbm{Z}_4 \times \mathbbm{Z}_2$. If $a\neq 0$ then the pair $(\xi, \upsilon)$ corresponding to $\psi_{x,y}$ is generated by $(\eta, \eta^2)$ and $(1, \eta^4)$ where $\eta$ is a primitive eighth root of unity. Thus 
we have $\mathbbm{W}\cong  \mathbbm{Z}_8 \times \mathbbm{Z}_2 \times \mathbbm{Z}_2$. Next, we consider the case where $b\neq 0$ and $c\neq 0$. We have $\xi^8 = 1$. Since $b\neq 0$, $\upsilon$ is uniquely determined. Thus we have $\mathbbm{W}\cong \mathbbm{Z}_8 \times \mathbbm{Z}_2$. Finally we consider the case where $b\neq 0$ and $c=0$. If $a=0$ then there is the pair $(\xi, \upsilon) = (\zeta_4, \zeta_{24}^7)$ such that the order of the corresponding automorphism $\psi_{x,y}$ is equal to $24$, where $\zeta_4$ (resp. $\zeta_{24}$) is a primitive fourth (resp. twenty fourth) root of unity. Thus we have $\mathbbm{W}\cong \mathbbm{Z}_{24}$. If $a\neq 0$ then it has same automorphism subgroup as the case $b\neq 0$ and $c\neq 0$. Therefore, we have
\begin{equation*}
    \mathrm{Aut}(X)\cong (\mathbbm{G}_a\rtimes \mathbbm{G}_m)\rtimes
    \begin{dcases}
        \mathbbm{Z}_8 \times \mathbbm{Z}_2 \times \mathbbm{Z}_2 & \text{if $a\neq 0$ and $b=0$},\\
        \mathbbm{Z}_8 \times \mathbbm{Z}_4 \times \mathbbm{Z}_2 & \text{if $a=b=0$},\\
        \mathbbm{Z}_{8} \times {\mathbbm Z}_3 \times \mathbbm{Z}_2 & \text{if $b\neq 0$ and $c=0$},\\
        \mathbbm{Z}_8 \times \mathbbm{Z}_2 & \text{otherwise}.
    \end{dcases}
\end{equation*}

\subsubsection{{\rm No. 123 }}
Let $X$ be in the family No. 123. By a suitable coordinate change, we may assume that $X$ is defined as follows:
\begin{equation*}
    X_6: xw + zt + y^3 = 0\subset \mathbbm{P}(1_x,2_y,3_z,3_t,5_w).
\end{equation*}
By applying (\ref{Aut}), we have ${\rm Aut}(X) \cong \mathrm{Aut}({\mathbbm P}(1_x,2_y,3_z,3_t); E, h)$, where $E$ is the divisor defined by $x=0$ and $h$ is the curve defined by $x=zt + y^3 = 0$. An automorphism $\phi\in\mathrm{Aut}({\mathbbm P}(1_x,2_y,3_z,3_t); E, h)$ is of the form: 
\begin{equation*}
    \phi(x:y:z:t) = (\xi x : \upsilon_1 y + \upsilon_2 x^2 : \zeta_1 z + \zeta_2 t + \zeta_3 xy + \zeta^4 x^3 : \tau_1 z + \tau_2 t + \tau_3 xy + \tau_4 x^3),
\end{equation*}
where $\xi$, $\upsilon_i$, $\zeta_i$ and $\tau_i$ are constants satisfying $\upsilon_1^3 = 1$. The automorphism $\phi$ can be decomposed into three types of automorphisms, that is, $\phi = \varphi \circ \psi_{z,t} \circ \psi_y$, where
\begin{align*}
\psi_y(x:y:z:t) &= (x : \upsilon_1 y : z : t), \\
    \psi_{z,t}(x:y:z:t) &= (x : y:  \zeta_1 z + \zeta_2 t : \tau_1 z + \tau_2 t),\\
    \varphi(x:y:z:t) &= (\xi x : y + \upsilon_2 x^2: z + {\color{black}{\upsilon_1^{-1}}} \zeta_3 xy + \zeta_4 x^3 : t + {\color{black}{\upsilon_1^{-1}}} \tau_3 xy + \tau_4 x^3). 
\end{align*}
The sub-group of $\mathrm{Aut}({\mathbbm P}(1_x,2_y,3_z,3_t); E, h)$ composed of automorphisms of type $\psi_y$ is isomorphic to ${\mathbbm Z}_3$, meanwhile that composed of automorphisms of type $\varphi$ is isomorphic to ${\mathbbm G}_a^5 \times {\mathbbm G}_m$. On the other hand, since the restriction of $\psi_{z,t}$ onto $E=\{ x=0 \} \cong {\mathbbm P} (2_y, 3_t, 3_w)$: 
\[
\psi_{z,t}|_E: (y:z:t) \mapsto \big{(} y: \zeta_1 z + \zeta_2 t: \tau_1 z + \tau_2 t \big{)} 
\]
must leave the curve $h$ defined by $zt+y^3=0$ invariant, the quadruple $(\zeta_1, \zeta_2, \tau_1, \tau_2)$ is described as follows:
\[
(\zeta_1, \zeta_2, \tau_1, \tau_2) = ( \zeta_1, 0, 0, \zeta_1^{-1}) \quad {\rm or} \quad (0, \zeta_2, \zeta_2^{-1}, 0),
\]
where $\zeta_1, \zeta_2 \in k^\ast$. Thus the sub-group generated by automorphisms of type $\psi_{z,t}$ is isomorphic to ${\mathbbm G}_m \rtimes {\mathbbm Z}_2$, where ${\mathbbm Z}_2$ acts on ${\mathbbm G}_m$ by means of the inverse action. As a consequence, we have finally:
\begin{equation*}
    \mathrm{Aut}(X)\cong (\mathbbm{G}_a^5\rtimes \mathbbm{G}_m^2)\rtimes (\mathbbm{Z}_2\times \mathbbm{Z}_3).
\end{equation*}


\subsubsection{\color{black}{\rm No. 124}}
Let $X$ be in the family No. 124. By a suitable coordinate change, we may assume that $X$ is defined as follows:
\begin{equation*}
    X_{10}: zw + t^2 + y^5 + a_1x^4y^3 + a_2x^6y^2 + a_3x^8y + a_4x^{10} \subset \mathbbm{P}(1_x,2_y,3_z,5_t,7_w),
\end{equation*}
where $a_i$ are constants. {\color{black}Since $X$ is quasi-smooth, at least one of the constants $a_3$ or $a_4$ must be nonzero.} To determine the automorphism group of $X$, it suffices to consider the automorphism group $\mathrm{Aut}({\mathbbm P}(1_x,2_y,3_z,5_t); E, h)$ by applying (\ref{Aut}), where $E$ is the divisor defined by $z=0$ and $h$ is the curve defined by $z=t^2 + y^5 + a_1x^4y^3 + a_2x^6y^2 + a_3x^8y + a_4x^{10} = 0$. Every automorphism $\phi\in\mathrm{Aut}({\mathbbm P}(1_x,2_y,3_z,5_t); E, h)$ is of the form
\begin{equation*}
    \phi(x:y:z:t) = (\xi x : \upsilon y : \zeta z : \tau_1 t + \tau_2 yz + \tau_3 x^2z),
\end{equation*}
where $\xi$, $\upsilon$, $\zeta$ and $\tau_i$ are constants satisfying $\xi^{10} = 1$, $\upsilon^5 = 1$ and $\tau_1^2 = 1$. The automorphism $\phi$ can be decomposed into three types of automorphisms, that is, $\phi = \varphi_3 \circ \varphi_2 \circ \varphi_1$, where
\begin{align*}
    \varphi_1(x:y:z:t) &= (x : y : z : \tau_1 t),\\
    \varphi_2(x:y:z:t) &= (\xi x : \upsilon y: z : t),\\
    \varphi_3(x:y:z:t) &= (x : y : \zeta z : t + {\color{black}{\upsilon^{-1}}} \tau_2 yz + {\color{black}{\xi^{-2}}} \tau_3 x^2z).
\end{align*}
The sub-group of $\mathrm{Aut}({\mathbbm P}(1_x,2_y,3_z,5_t); E, h)$ generated by automorphisms of $\varphi_1$ (resp. $\varphi_3$) is isomorphic to ${\mathbbm Z}_2$ (resp. ${\mathbbm G}_a^2 \rtimes {\mathbbm G}_m$). {\color{black}In what follows, for the time being we observe the sub-group $G$ generated by automorphisms of the form $\varphi_2$. The argument can be divided into two cases: either $a_4\neq 0$ or $a_4=0$. 

At first, we will deal with the case where $a_4\neq 0$. If $a_1=a_2=a_3=0$, then $\xi = \zeta_{10}^j$, $\upsilon = \zeta_5^i$ $(1\leqq i \leqq 5, 1\leqq j \leqq 10)$, where $\zeta_5$ and $\zeta_{10}$ are respectively a primitive fifth root of unity, a primitive tenth root of unity. Thence we have $G\cong {\mathbb Z}_5 \times {\mathbb Z}_{10}\cong {\mathbbm Z}_2 \times {\mathbbm Z}_5^2$. Otherwise, the direct computation says that $G$ is isomorphic to ${\mathbbm Z}_{10} \cong {\mathbbm Z}_2 \times {\mathbbm Z}_5$. 

Next, we consider the case where $a_4=0$, that is, $a_3\neq 0$. If $a_1=a_2=0$, then we have $\xi^8 \upsilon =1$ with $\upsilon = \zeta_5^i$ $(1 \leqq i \leqq 5)$, hence $G$ is isomorphic to ${\mathbbm Z}_{40}\cong {\mathbbm Z}_5 \times {\mathbbm Z}_8$. On the other hand, if $a_2 \neq 0$, then we have $\xi^2=\upsilon$ with $\upsilon = \zeta_5^i$ $(1 \leqq i \leqq 5)$. Hence $G \cong {\mathbbm Z}_2 \times {\mathbbm Z}_5$. Finally if $a_1\neq 0$ and $a_2=0$, then we have $\xi^4=\upsilon^2$ with $\upsilon = \zeta_5^i$ $(1 \leqq i \leqq 5)$, which says that $G \cong {\mathbbm Z}_4 \times {\mathbbm Z}_5$. 
Summarizing, we have finally: 
\begin{equation*}
    \mathrm{Aut}(X)\cong
    \begin{dcases}
        \big{(} \mathbbm{G}_a^2\rtimes \mathbbm{G}_m \big{)} \rtimes \big{(} \mathbbm{Z}_{2}^2\times \mathbbm{Z}_5^2 \big{)} & \text{if $a_1 = a_2 = a_3=0$}\\
        \big{(} \mathbbm{G}_a^2\rtimes \mathbbm{G}_m \big{)} \rtimes \big{(} {\mathbbm Z}_2 \times {\mathbbm Z}_5 \times {\mathbbm Z}_8 \big{)} & \text{if $a_1 = a_2 = a_4=0$}\\
        \big{(} \mathbbm{G}_a^2\rtimes \mathbbm{G}_m \big{)} \rtimes \big{(} \mathbbm{Z}_{2} \times {\mathbbm Z}_4 \times {\mathbbm Z}_5 \big{)} & \text{if $a_1 \neq 0$ and $a_2=a_4=0$}\\
        \big{(} \mathbbm{G}_a^2\rtimes \mathbbm{G}_m \big{)} \rtimes \big{(} \mathbbm{Z}_{2}^2 \times {\mathbbm Z}_5 \big{)}  & \text{otherwise.}
    \end{dcases}
\end{equation*}
}


\subsubsection{{\rm No. 125}}
Let $X$ be in the family No. 125. By a suitable coordinate change, we may assume that $X$ is defined as follows:
\begin{equation*}
    X_{12}: tw + z^3 + (a_1y^2 + a_2x^3y + a_3x^6)x^2z + y^4 + f_3(x,y)x^6y + bx^{12}  = 0\subset \mathbbm{P}(1_x,3_y,4_z,5_t,7_w),
\end{equation*}
where $a_i$ and $b$ are constants and $f_3(x,y)$ are quasi-homogeneous polynomials of degree $3$. By applying (\ref{Aut}) we only need to consider the automorphism group $\mathrm{Aut}({\mathbbm P}(1_x,3_y,4_z,5_t); E, h)$, where $E$ is the divisor defined by $t=0$ and $h$ is the curve defined by $t=z^3 + x^2(a_1y^2 + a_2x^3y + a_3x^6)z + y^4 + b_1x^6y^2 + b_2x^9y + b_3x^{12} = 0$. Every automorphism $\phi\in\mathrm{Aut}({\mathbbm P}(1_x,3_y,4_z,5_t); E, h)$ should be of the form
\begin{equation*}
    \phi(x:y:z:t) = (\xi x : \upsilon y : \zeta z : \tau t),
\end{equation*}
where $\xi$, $\upsilon$, $\zeta$ and $\tau$ are constants satisfying $\xi^{12}=1$, $\upsilon^4=1$ and $\zeta^3=1$. We have
\begin{equation*}
    \mathrm{Aut}(X)\cong \mathbbm{G}_m \times
    \begin{dcases}
        \mathbbm{Z}_{3} \times {\mathbbm Z}_4 & \text{if at least one $a_i$ is nonzero and $f_3(x,y)\neq 0$}, \\
        \mathbbm{Z}_{3}\times \mathbbm{Z}_4^2 & \text{if $a_1=a_2=0$, $a_3\neq 0$ and $f_3(x,y)\equiv 0$},\\
        \mathbbm{Z}_{3}^2 \times \mathbbm{Z}_4 & \text{if $a_1=a_2=a_3=0$ and $f_3(x,y)\neq 0$},\\
        \mathbbm{Z}_{3}^2 \times \mathbbm{Z}_{4}^2 & \text{if $a_1=a_2=a_3= 0$ and $f_3(x,y)\equiv 0$}.
    \end{dcases}
\end{equation*}

\subsubsection{{\rm No. 126}}
Let $X$ be in the family No. 126. By a suitable coordinate change, we may assume that $X$ is defined as follows:
\begin{equation*}
    X_6: xw + yt + z^2 = 0\subset \mathbbm{P}(1_x,2_y,3_z,4_t,5_w).
\end{equation*}
By applying (\ref{Aut}), we only need to consider the automorphism group $\mathrm{Aut}({\mathbbm P}(1_x,2_y,3_z,4_t); E, h)$, where $E$ is the divisor defined by $x=0$ and $h$ is the curve defined by $x=yt + z^2 = 0$. Every automorphism $\phi\in\mathrm{Aut}({\mathbbm P}(1_x,2_y,3_z,4_t); E, h)$ is of the form
\begin{equation*}
    \phi(x:y:z:t) = (\xi x : \upsilon_1 y + \upsilon_2 x^2: \zeta_1 z + \zeta_2 xy + \zeta_3 x^3: \tau_1 t + \tau_2 xz + \tau_3 x^2y + \tau_4 x^4),
\end{equation*}
where $\xi$, $\upsilon_i$, $\zeta_i$ and $\tau_i$ are constants. The automorphism $\phi$ can be decomposed into three types of automorphisms, that is, $\phi = \varphi \circ \psi_{y,t} \circ \psi_z$, where
\begin{align*}
 \psi_z(x:y:z:t) &= (x : y : \zeta_1 z : t), \\ 
    \psi_{y,t}(x:y:z:t) &= (x : \upsilon_1 y: z : \tau_1 t),\\
   \varphi(x:y:z:t) &= (\xi x : y + \upsilon_2 x^2 : z + \upsilon_1^{-1} \zeta_2 xy + \zeta_3 x^3: t + \zeta_1^{-1} \tau_2 xz + \upsilon_1^{-1} \tau_3 x^2y + \tau_4 x^4). 
\end{align*}
The equations $\upsilon_1 \tau_1 = 1$ and $\zeta_1^2 = 1$ hold. These imply that the subgroups of $\mathrm{Aut}({\mathbbm P}(1_x,2_y,3_z,4_t); E, h)$ corresponding to $\psi_{y,t}$ and $\psi_z$ are isomorphic to $\mathbbm{G}_m$ and $\mathbbm{Z}_2$, respectively. Therefore, we have
\begin{equation*}
    \mathrm{Aut}(X)\cong (\mathbbm{G}_a^6 \rtimes \mathbbm{G}_m^2) \rtimes \mathbbm{Z}_2.
\end{equation*}

\subsubsection{{\rm No. 127}}
Let $X$ be in the family No. 127. By a suitable coordinate change, we may assume that $X$ is defined as follows:
\begin{equation*}
    X_{12}: tw + y^4 + z^3 + ax^4z + bx^{6} = 0\subset \mathbbm{P}(2_x,3_y,4_z,5_t,7_w),
\end{equation*}
where $a$ and $b$ are constants. Note that the case $(a,b)=(0,0)$ does not occur since $X_{12}$ is assumed to be quasi-smooth. In consideration of (\ref{Aut}), we have only to consider the automorphism group $\mathrm{Aut}({\mathbbm P}(2_x,3_y,4_z,5_t); E, h)$, where $E$ is the divisor defined by $t=0$ and $h$ is the curve defined by $t=y^4 + z^3 + ax^4z + bx^{6} = 0$. Every automorphism $\phi\in\mathrm{Aut}({\mathbbm P}(2_x,3_y,4_z,5_t); E, h)$ is of the form
\begin{equation*}
    \phi(x:y:z:t) = (\xi x : \upsilon y : \zeta z : \tau t),
\end{equation*}
where $\xi$, $\upsilon$, $\zeta$ and $\tau$ are non-zero constants. The automorphism $\phi$ can be decomposed into three types of automorphisms, that is, $\phi = \varphi \circ \psi_{x,z} \circ \psi_y$, where
\begin{align*}
\psi_y(x:y:z:t) &= (x : \upsilon y : z : t), \\
    \psi_{x,z}(x:y:z:t) &= (\xi x : y: \zeta z : t ),\\
    \varphi(x:y:z:t) &= (x : y : z : \tau t). 
\end{align*}
The sub-groups generated by automorphisms corresponding to $\psi_y$ (resp. $\varphi$) is isomorphic to ${\mathbbm Z}_4$ (resp. ${\mathbbm G}_m$). The restriction $\phi|_E$ of $\phi$ onto $E=\{ t=0 \} \cong {\mathbbm P}(2_x,3_y,4_z)$:
\[
\phi|_E: (x:y:z) \mapsto \big{(} \xi x : \upsilon y: \zeta z \big{)}
\]
should leave the curve defined by $y^4+z^3+ax^4z+bx^6=0$ invariant, in other words, the following equalities have to be satisfied:
\[
\upsilon^4=1, \, \zeta^3=1, \, a\xi^4 \zeta = a, \, b\xi^6 =b.
\]
Let $G$ be the sub-group of ${\rm Aut} ({\mathbbm P}(2_x,3_y,4_z,5_t); E, h)$ generated by automorphisms $\psi_{x,z}$. The straightforward computation says that:
\begin{equation*}
    G \cong
    \begin{dcases}
       {\mathbbm Z}_2 \times {\mathbbm Z}_3 &\text{if $a\neq 0$ and $b\neq 0$},\\
        {\mathbbm Z}_3 \times {\mathbbm Z}_4 &\text{if $a\neq 0$ and $b= 0$},\\
        {\mathbbm Z}_2 \times {\mathbbm Z}_3^2 &\text{if $a = 0$ and $b\neq 0$}.
    \end{dcases}
\end{equation*}
Therefore we have
\begin{equation*}
    \mathrm{Aut}(X)\cong
    \begin{dcases}
        \mathbbm{G}_m \times ({\mathbbm Z}_2 \times {\mathbbm Z}_3 \times {\mathbbm Z}_4)&\text{if $a\neq 0$ and $b\neq 0$},\\
        \mathbbm{G}_m \times ({\mathbbm Z}_3 \times {\mathbbm Z}_4^2)&\text{if $a\neq 0$ and $b= 0$},\\
        \mathbbm{G}_m \times ({\mathbbm Z}_2 \times {\mathbbm Z}_3^2 \times {\mathbbm Z}_4)&\text{if $a = 0$ and $b\neq 0$}.
    \end{dcases}
\end{equation*}

\subsubsection{{\rm No. 128}}
Let $X$ be in the family No. 128. By a suitable coordinate change, we may assume that $X$ is defined as follows:
\begin{equation*}
    X_{12}: zw + t^2 + y^3 + ax^8y + bx^{12} = 0\subset \mathbbm{P}(1_x,4_y,5_z,6_t,7_w),
\end{equation*}
where $a$ and $b$ are constants. By applying (\ref{Aut}), we only need to consider the automorphism group $\mathrm{Aut}({\mathbbm P}(1_x,4_y,5_z,6_t); E, h)$, where $E$ is the divisor defined by $z=0$ and $h$ is the curve defined by $z=t^2 + y^3 + ax^8y + bx^{12} = 0$. Every automorphism $\phi\in\mathrm{Aut}({\mathbbm P}(1_x,4_y,5_z,6_t); E, h)$ is of the form
\begin{equation*}
    \phi(x:y:z:t) = (\xi x : \upsilon y : \zeta z : \tau_1 t + \tau_2 xz),
\end{equation*}
where $\xi$, $\upsilon$, $\zeta$ and $\tau_i$ are constants. The automorphism $\phi$ can be decomposed into three types of automorphisms, that is, $\phi = \varphi \circ \psi_{x,y} \circ \psi_t$, where
\begin{align*}
\psi_t(x:y:z:t) &= (x : y : z : \tau_1 t), \\ 
    \psi_{x,y}(x:y:z:t) &= (\xi x : \upsilon y: z : t ),\\
     \varphi(x:y:z:t) &= (x : y : \zeta z  : t + \xi^{-1} \tau_2 xz). 
\end{align*}
We consider the subgroup $G$ of $\mathrm{Aut}({\mathbbm P}(1_x,4_y,5_z,6_t); E, h)$ corresponding to $\psi_{x,y}$. Note that the restriction $\phi|_E$ of $\phi$ to $E=\{ z=0 \} \cong {\mathbbm P}(1_x, 4_y, 6_t)$ has to leave the curve defined by $t^2+y^3+ax^8y+bx^{12}=0$ invariant, which means that the following equalities must hold:
\[
\tau_1^2=1, \, \upsilon^3=1, \, a\xi^8\upsilon =a, \, b \xi^{12}= b.
\]
The straight forward computation allows to conclude:
\begin{equation*}
    G \cong
    \begin{dcases}
       {\mathbbm Z}_3 \times {\mathbbm Z}_4 &\text{if $a\neq 0$ and $b\neq 0$},\\
        {\mathbbm Z}_3 \times {\mathbbm Z}_8 &\text{if $a\neq 0$ and $b= 0$},\\
        {\mathbbm Z}_3^2 \times {\mathbbm Z}_4 &\text{if $a = 0$ and $b\neq 0$}.
    \end{dcases}
\end{equation*}
As a consequence, we have: 
\begin{equation*}
    \mathrm{Aut}(X)\cong
    \begin{dcases}
        (\mathbbm{G}_a \rtimes \mathbbm{G}_m) \rtimes (\mathbbm{Z}_2 \times {\mathbbm Z}_3 \times {\mathbbm Z}_4)&\text{if $a\neq 0$ and $b\neq 0$},\\
        (\mathbbm{G}_a \rtimes \mathbbm{G}_m) \rtimes (\mathbbm{Z}_2 \times {\mathbbm Z}_3 \times {\mathbbm Z}_8)&\text{if $a\neq 0$ and $b= 0$},\\
        (\mathbbm{G}_a \rtimes \mathbbm{G}_m) \rtimes ( {\mathbbm Z}_2 \times \mathbbm{Z}_3^2 \times \mathbbm{Z}_{4})&\text{if $a = 0$ and $b\neq 0$}.
    \end{dcases}
\end{equation*}

\subsubsection{{\rm No. 129}}
Let $X$ be in the family No. 129. By a suitable coordinate change, we may assume that $X$ is defined as follows:
\begin{equation*}
    X_{10}: yw + t^2 + xz^2 + x^5 = 0\subset \mathbbm{P}(2_x,3_y,4_z,5_t,7_w).
\end{equation*}
By applying (\ref{Aut}), we only need to consider the automorphism group $\mathrm{Aut}({\mathbbm P}(2_x,3_y,4_z,5_t); E, h)$, where $E$ is the divisor defined by $y=0$ and $h$ is the curve defined by $y=t^2 + xz^2 + x^5 = 0$. Every automorphism $\phi\in\mathrm{Aut}({\mathbbm P}(2_x,3_y,4_z,5_t); E, h)$ is of the form: 
\begin{equation*}
    \phi(x:y:z:t) = (\xi x : \upsilon y : \zeta z : \tau_1 t + \tau_2 xy),
\end{equation*}
where $\xi$, $\upsilon$, $\zeta$ and $\tau_i$ are constants. Note that the automorphism $\phi$ can be decomposed into three types of automorphisms, that is, $\phi = \varphi \circ \psi_{x,z} \circ \psi_t$, where
\begin{align*}
 \psi_t(x:y:z:t) &= (x : y : z : \tau_1 t), \\ 
    \psi_{x,z}(x:y:z:t) &= (\xi x : y: \zeta z : t ),\\
   \varphi(x:y:z:t) &= (x : \upsilon y : z  : t + \xi^{-1} \tau_2 xy). 
\end{align*}
The restriction of $\phi$ onto $E=\{ y=0 \} \cong {\mathbbm P}(2_x, 4_z, 5_t)$ yields:
\[
\phi|_E: (x:z:t) \mapsto \big{(} \xi x: \zeta z: \tau_1 t \big{)} 
\]
needs to leave the curve defined by $t^2+xz^2+x^5=0$ invariant, which means:
\[
\tau_1^2=1, \, \xi^5=1, \, \xi \zeta^2=1. 
\]
This implies in turn that the sub-group generated by $\psi_{x,z}$ and $\psi_t$ is isomorphic to ${\mathbbm Z}_2^2 \times {\mathbbm Z}_5$. 
Therefore we have: 
\begin{equation*}
    \mathrm{Aut}(X)\cong (\mathbbm{G}_a\rtimes \mathbbm{G}_m) \rtimes (\mathbbm{Z}_2^2 \times {\mathbbm Z}_5). 
\end{equation*}

\subsubsection{{\rm No. 130}}
Let $X$ be in the family No. 130. By a suitable coordinate change, we may assume that $X$ is defined as follows:
\begin{equation*}
    X_{12}: zw + t^2 + y^3 + x^4 = 0  \subseteq {\mathbbm P}(3_x,4_y,5_z,6_t,7_w).
\end{equation*}
By applying (\ref{Aut}), we only need to consider the automorphism group $\mathrm{Aut}({\mathbbm P}(3_x,4_y,5_z,6_t); E, h)$, where $E$ is the divisor defined by $z=0$ and $h$ is the curve defined by $z=t^2 + y^3 + x^4 = 0$. Every automorphism $\phi\in\mathrm{Aut}({\mathbbm P}(3_x,4_y,5_z,6_t); E, h)$ is of the form: 
\begin{equation*}
    \phi(x:y:z:t) = (\xi x : \upsilon y : \zeta z : \tau t),
\end{equation*}
where $\xi$, $\upsilon$, $\zeta$ and $\tau$ are constants such that $\xi^4= 1$, $\upsilon^3 = 1$, $\zeta \neq 0$ and $\tau^2 = 1$. The automorphism $\phi$ can be decomposed into four types of automorphisms, that is, $\phi = \psi_t \circ \psi_z \circ \psi_y \circ \psi_x$, where
\begin{align*}
    \psi_x(x:y:z:t) &= (\xi x : y: z : t ),\\
    \psi_y(x:y:z:t) &= (x : \upsilon y: z : t ),\\
    \psi_z(x:y:z:t) &= (x : y : \zeta z  : t),\\
    \psi_t(x:y:z:t) &= (x : y : z : \tau t).
\end{align*}
Therefore we have: 
\begin{equation*}
    \mathrm{Aut}(X) \cong \mathbbm{G}_m\times \mathbbm{Z}_2\times \mathbbm{Z}_3\times \mathbbm{Z}_4.
\end{equation*}

\section{Proof of Theorem \ref{thm:form}}\label{section4}
\subsection{}\label{4-1}
In this section, we will prove Theorem \ref{thm:form}. Let $\Bbbk$ be a field of characteristic zero and let us denote by $k=\overline{\Bbbk}$ the algebraic closure of $\Bbbk$ in a fixed universal domain. Let $Y={\rm Proj}_{\Bbbk}(S)$ be a projective variety defined over $\Bbbk$ such that its base extension $X:=Y_k$ is a non-smooth weighted Fano threefold hypersurface containing the affine $3$-space ${\mathbbm A}_k^3$, namely, belonging to the families No.105, 111, 113, 118, 119, 123 and 126. In what follows, we will show that $Y$ contains always the affine $3$-space ${\mathbbm A}_{\Bbbk}^3$ defined over $\Bbbk$. 
\subsection{Strategy of Proof}\label{4-2}
Before dealing with $\Bbbk$-forms of Fanos in question separately, we will explain a common strategy: The defining equation of the base extension $X=Y_k$ in the corresponding weighted projective space is described as follows by a suitable change of coordinates:
\[
xw+ f(y,z,t) =0 \, \subseteq \, {\mathbb P}\big{(} 1_x, a_y, b_z, c_t, d_w \big{)}, 
\]
where $f(y,z,t)$ is a suitable quasi-homogeneous polynomial of degree $d+1$ in $y,z,t$. In our case, note that the point ${\mathsf p}_w=[0:0:0:0:1]$ is defined over $\Bbbk$ (because ${\mathsf p}_w$ is a unique singular point of $X$ of the highest index $d$), furthermore, the special surface $H=\{ x=0 \} \cap X$ is also defined over $\Bbbk$ (because $H$ is a unique member of $|{\mathscr O}_X(1)|$ having ${\mathsf p}_w$ as a cone singularity). In particular, the maximal ideal ${\mathfrak m}_{{\mathsf p}_w}$ can be regarded as a homogeneous ideal of $S$. Then we have an injective homomorphism of graded $\Bbbk$-algebras:
\[
{\rm Sym}^{\bullet} \Big{(} {\mathfrak m}_{{\mathsf p}_w} \big{/} {\mathfrak m}_{{\mathsf p}_w}^2 \Big{)} \hookrightarrow S, 
\]
which gives rise to a rational map defined over $\Bbbk$:
\[
\pi: \, Y={\rm Proj}_{\Bbbk} \big{(} S \big{)} \, \dasharrow \, 
{\rm Proj}_{\Bbbk} \Big{(} {\rm Sym}^{\bullet} \big{(} {\mathfrak m}_{{\mathsf p}_w} \big{/} {\mathfrak m}_{{\mathsf p}_w}^2 \big{)} \Big{)}=:P
\]
By construction, the base extension $\pi_k$ of $\pi$ coincides with the projection of $X$ from ${\mathsf p}_w$:
\[
\pi_{{\mathsf p}_w}: X \, \dasharrow \, {\mathbbm P} \big{(} 1_x, a_y, b_z, c_t \big{)}, \quad \big{[} x:y:z:t:w \big{]} \mapsto \big{[} x:y:z:t \big{]}.
\]
As seen in \ref{3-1}, $\pi_k=\pi_{{\mathsf p}_w}$ yields an isomorphism between the open subsets $X \backslash H$ and ${\mathbbm P}(1_x,a_y,b_z, c_t) \backslash E$, where $E$ is the divisor defined by $x=0$. In particular, $E$ is also defined over $\Bbbk$, and $\pi$ yields an isomorphism $Y \backslash H \cong P \backslash E$. 
Note that the base extension of $P\backslash E$ to the algebraic closure $k$ is isomorphic to ${\mathbbm A}_{k}^3$, however it does not necessarily imply that $P\backslash E$ itself is isomorphic to ${\mathbbm A}_{\Bbbk}^3$ because we do not know so far the triviality of forms of the affine spaces of dimension greater than or equal to three. We will proceed depending on whether $b_z=c_t$ or not. In case that $b_z=c_t$, the coordinate stratum curve $\ell_{zt}=\{ x=y=0 \}$ is actually defined over $\Bbbk$, and we will find an appropriate mobile linear pencil ${\mathscr L}$ defined over $\Bbbk$, hence ${\mathscr L}$ can be regarded as a linear pencil on $P$ with $\ell_{zt}$ being its base locus such that the restriction of the rational map:
\[
\varphi_{{\mathscr L}}: \, P \, \dasharrow \, {\mathbbm P} \big{(} {\mathscr L}^{\vee} \big{)} \cong {\mathbbm P}_{\Bbbk}^1
\]
onto the open subset $P \backslash E$ gives rise to a trivial ${\mathbbm A}^2$-bundle over the affine line ${\mathbbm A}_{\Bbbk}^1$ to conclude $Y \backslash H \cong P \backslash E \cong {\mathbbm A}_{\Bbbk}^1 \times {\mathbbm A}_{\Bbbk}^2 \cong {\mathbbm A}_{\Bbbk}^3$. Meanwhile, in case of $b_z<c_t$, the point ${\mathsf q}_t=[0:0:0:1]$ yields a $\Bbbk$-rational point of $P$, and we shall find a suitable linear system ${\mathscr H}$ of dimension two defined over $\Bbbk$ having ${\mathsf q}_t$ as a unique base point such that the restriction of the rational map:
\[
\varphi_{{\mathscr H}}: \, P \, \dasharrow \, {\mathbbm P} \big{(} {\mathscr H}^{\vee} \big{)} \cong {\mathbbm P}_{\Bbbk}^2
\]
to $P \backslash E$ yields a trivial ${\mathbbm A}^1$-bundle over the affine plane ${\mathbbm A}_{\Bbbk}^2$ to see that $P\backslash E \cong {\mathbbm A}_{\Bbbk}^2 \times {\mathbbm A}_{\Bbbk}^1 \cong {\mathbbm A}_{\Bbbk}^3$. Thus the main part for the proof lies in how to find the linear pencils ${\mathscr L}$ or the linear systems ${\mathscr H}$ with desired properties for each family in question except for No.105.  
\subsection{Proof}\label{4-3}
In what follows, we will explain how to find the desired linear pencil or the linear system of dimension two on $P$ for each family from No.111, 113, 118, 119, 123 and 126, separately. Note that for No.105 we will proceed in a more direct way: 
\subsubsection{{\rm No. 105}}\label{105}
In this case, $P$ is a Severi-Brauer threefold such that $E$ is a twisted linear space of codimension one. Hence $P$ is isomorphic to ${\mathbbm P}_{\Bbbk}^3$ with $E$ being a hyperplane (cf. \cite{GS}). In particular, $P\backslash E \cong {\mathbbm A}_{\Bbbk}^3$. 

\subsubsection{{\rm No. 111}}\label{111}
In this case, we have $P_k \cong {\mathbbm P}(1_x,1_y,1_z,2_t)$, hence the point ${\mathsf q}_t=[0:0:0:1]$ is defined over $\Bbbk$. Then the linear system of dimension two:
\[
{\mathscr H}:= \big{|} {\mathscr O}_P (E) \otimes {\mathfrak m}_{{\mathsf q}_t} \big{|}
\]
on $P$ satisfies the desired properties. 

\subsubsection{{\rm No. 113}}\label{113}
In this case, $P_k \cong {\mathbbm P} ( 1_x, 1_y, 2_z, 2_t)$ and the line $\ell_{zt}=\{ x=y=0 \}$ is defined over $\Bbbk$. Then we can check that the linear pencil:
\[
{\mathscr L}:= \big{|} {\mathscr O}_P (E) \otimes {\mathscr I}_{\ell_{zt}} \big{|}
\]
satisfies the desired properties. 

\subsubsection{{\rm No. 118}}\label{118}
In this case, we see $P_k \cong {\mathbbm P} ( 1_x, 1_y, 2_z, 3_t)$ and the line $\ell_{zt}=\{ x=y=0 \}$ is defined over $\Bbbk$. As in \ref{113}, the linear pencil:
\[
{\mathscr L}:= \big{|} {\mathscr O}_P (E) \otimes {\mathscr I}_{\ell_{zt}} \big{|}
\]
satisfies the desired properties. 

\subsubsection{{\rm No. 119}}\label{119}
In this case, we have $P_k \cong {\mathbbm P}(1_x,2_y,2_z,3_t)$, so that the point ${\mathsf q}_t=[0:0:0:1]$ is defined over $\Bbbk$. The linear system of dimension two:
\[
{\mathscr H}:= \big{|} {\mathscr O}_P (2E) \otimes {\mathfrak m}_{{\mathsf q}_t} \big{|}
\]
satisfies the desired properties. 

\subsubsection{{\rm No. 123}}\label{123}
In this case, we see $P_k \cong {\mathbbm P} ( 1_x, 2_y, 3_z, 3_t)$, hence $\ell_{zt}=\{ x=y=0 \}$ is defined over $\Bbbk$. Then the linear pencil on $P$:
\[
{\mathscr L}:= \big{|} {\mathscr O}_P (2E) \otimes {\mathscr I}_{\ell_{zt}} \big{|}
\]
satisfies the desired properties. 

\subsubsection{{\rm No. 126}}\label{126}
In this case, we see $P_k \cong {\mathbbm P} ( 1_x, 2_y, 3_z, 4_t)$, and $\ell_{zt}=\{ x=y=0 \}$ is defined over $\Bbbk$. Then the linear pencil on $P$:
\[
{\mathscr L}:= \big{|} {\mathscr O}_P (2E) \otimes {\mathscr I}_{\ell_{zt}} \big{|}
\]
satisfies the desired properties.

\bibliographystyle{amsplain}

\end{document}